\documentclass[10pt,regno]{amsart}
\usepackage{latexsym}
\usepackage{mathtools}
\usepackage{overpic}
\usepackage{amsmath,amsthm,amsfonts,amssymb,amscd,mathrsfs}
\usepackage{euscript,graphicx,color}
\usepackage{epstopdf,rotating}
\usepackage{chngcntr}
\usepackage{apptools}
\pdfoutput=1
\newtheorem*{proposition*}{Proposition}
\newtheorem*{theorem*}{Theorem}
\newtheorem{theorem}{Theorem}

\newtheorem{defi}{Definition}[section]
\newtheorem{teo}[defi]{Theorem}

\newtheorem{cl}[defi]{Claim}

\newtheorem{remark}[defi]{Remark}

\newtheorem{lemma}[defi]{Lemma}

\numberwithin{equation}{section}

\DeclareMathOperator{\I}{I}
\DeclareMathOperator{\J}{J}

\def\L{\mathfrak{L}}
\def\BHa{{\bf{BH1}}}
\def\BHb{{\bf{BH2}}}
\def\BHc{{\bf{BH3}}}
\def\BHd{{\bf{BH4}}}
\def\BHe{{\bf{BH5}}}
\def\BHf{{\bf{BH6}}}
\newcommand{\eqdef}{\stackrel{\scriptscriptstyle\rm def}{=}}


\begin{document}

\title[Blender-horseshoes]{ Blender-horseshoes\\ in center-unstable H\'enon-like families}
\author[L. J. D\'iaz and S. A. P\'erez ]{Lorenzo J. D\'iaz and Sebasti\'an A. P\'erez}
\address{Departamento de Matem\'atica PUC-Rio, Marqu\^es de S\~ao Vicente 225, G\'avea, Rio de Janeiro 225453-900, Brazil}
\email{lodiaz@mat.puc-rio.br}
\address{Centro de Matem\'atica da Universidade do Porto, Rua do Campo Alegre, 687, 4169-007 Porto, Portugal}
\email{sebastian.opazo@fc.up.pt}

\begin{abstract}
A \textit{blender-horseshoe} is a locally maximal transitive hyperbolic set that  appears in dimension at least
three 
carrying a distinctive geometrical property:
its local stable manifold ``behaves'' as a manifold
of topological dimension greater than the expected  one (the dimension of the stable bundle). This property persists under  perturbations turning this kind of dynamics an important piece in the global description of robust non-hyperbolic systems.  
In this paper, we consider a parameterized family of center-unstable H\'enon-like of endomorphisms in dimension three and show how
blender-horseshoes naturally occur in a specific parameter range.
\end{abstract}
  
\thanks{This paper is part of the PhD thesis of SP (PUC-Rio) supported by CNPq (Brazil).
The authors  thank the hospitality and support of Centro de Matem\'atica of Univ. of Porto (Portugal).
LJD is partially supported by CNE-Faperj, CNPq-grants (Brazil) and SP is partially supported by CMUP (UID/MAT/00144/2013) and PTDC/MAT-CAL/3884/2014, which are funded by FCT (Portugal) with national (MEC) and European structural funds through the programs  COMPTE and FEDER, under the partnership agreement PT2020.
}

\keywords{Blender, Blender-horseshoe, H\'enon-like families}
\subjclass[2000]{
37C45, 
37D30, 
37C29
}

\maketitle

\begin{center}
\hfill \emph{To Welington de Melo, in memoriam}\\
\end{center}

\section{Introduction}

\label{s.introduction} 
Naively, a {\emph{blender}} is a transitive hyperbolic set  that appears in dimension at least three and whose special geometrical configuration implies that 
the ``dimension" of its stable set is larger than the ``expected" one. To be a bit more precise, recall that the {\emph{index}} of a transitive hyperbolic set $\Lambda$, denoted by $\mathrm{ind}(\Lambda)$, is the dimension of its stable bundle
(by transitivity, the index is well defined). The leaves of the (local) stable sets of points in $\Lambda$ have dimension $\mathrm{ind}(\Lambda)$, however the (local) stable set of the blender $\Lambda$ behaves as a set of dimension $\mathrm{ind}(\Lambda)+1$ (or greater).
 In practical terms and applications, blenders are dynamical ``local plugs" which in some  (semi-local or global) configurations carry further important  properties of the dynamics (see the next paragraph). For an informal presentation of blenders 
and a discussion on their role
 in smooth dynamical systems we refer to ~\cite{BDCW} and \cite[Chapter 6.2]{BDV}.
Blenders were introduced in \cite{BonDia:96} as a
formalisation of the constructions in~\cite{L95}
in the context of  bifurcations via heterodimensional cycles. In \cite{BonDia:96},
blenders were used to construct new classes
of robustly transitive diffeomorphisms.  Later, blenders were used in several dynamical contexts:
Generation of robust heterodimensional cycles and  homoclinic tangencies, stable ergodicity,  Arnold diffusion, and construction of nonhyperbolic measures, among others. Each of these applications involves a specific
 type of blender such as blender-horseshoes \cite{BDtang}, symbolic blenders \cite{NasPuj:12,BarKiRai:14}, dynamical blenders \cite{BocBonDia:16}
and super-blenders~\cite{AviCroWil:16}.

In the original definition in  \cite{BonDia:96} the main emphasis is placed
on the persistence of its geometrical configuration that was key to guarantee the robust transitivity of non-hyperbolic sets, see the discussion in \cite[Chapter 6]{BDV}.  Although in many contexts the ``original" blenders in \cite{BonDia:96} are shown to be very useful,  a major con of  them  is that 
they fail to be  locally maximal sets, this deficiency carries some  constraints in their use and applications. 
This weakness was bypassed in
\cite{BDtang} by introducing a special type of blenders, called {\emph{blender-horseshoes,}} which are locally maximal and also conjugate to the standard Smale horseshoe, see Definition~\ref{d.BH}. These two additional useful properties can be explored to get additional relevant properties:
blender-horsehoes are the key local plugs to get {\emph{robust heterodimensional cycles}} and {\emph{robust homoclinic tangencies}} in the $C^1$-topology, see \cite{BDcycles} and \cite{BDtang}. 
 In some cases, one can also get some extra ``fractal-like" information about these blenders, see \cite{DiaGelGroJag:17} and also 
 \cite{MorZul:2012}. 
 Considering these aspects and also the use of blenders  to get robust cycles in bifurcation theory, one can think of blender-horseshoes as  a version of the 
so-called {\emph{thick horseshoes}} introduced by Newhouse in the construction of robust homoclinic tangencies of surface diffeomorphisms, 
see~\cite{N}.

In what follows, for simplicity and also considering the scope of this paper, our discussion is restricted to the three-dimensional case (adjustments to higher dimensions are straightforward).
There are some settings where blender-horseshoes appear in a natural way. A first one is the bifurcation of {\emph{heterodimensional cycles}} (i.e., there are a pair of
saddles having  indices one and two whose invariant manifolds meet cyclically). In this context,
the occurrence of blender-horseshoes is related to the existence of some non-normally hyperbolic dynamics that can be illustrated as
follows. Think of a standard horseshoe defined on
a ``square" and ``multiply" this dynamics by a ``weak expansion"  in the normal direction (to the square), see Figure \ref{fig:N}.
\begin{figure}[h]
\centering
 \includegraphics[width=.3
 \textwidth]{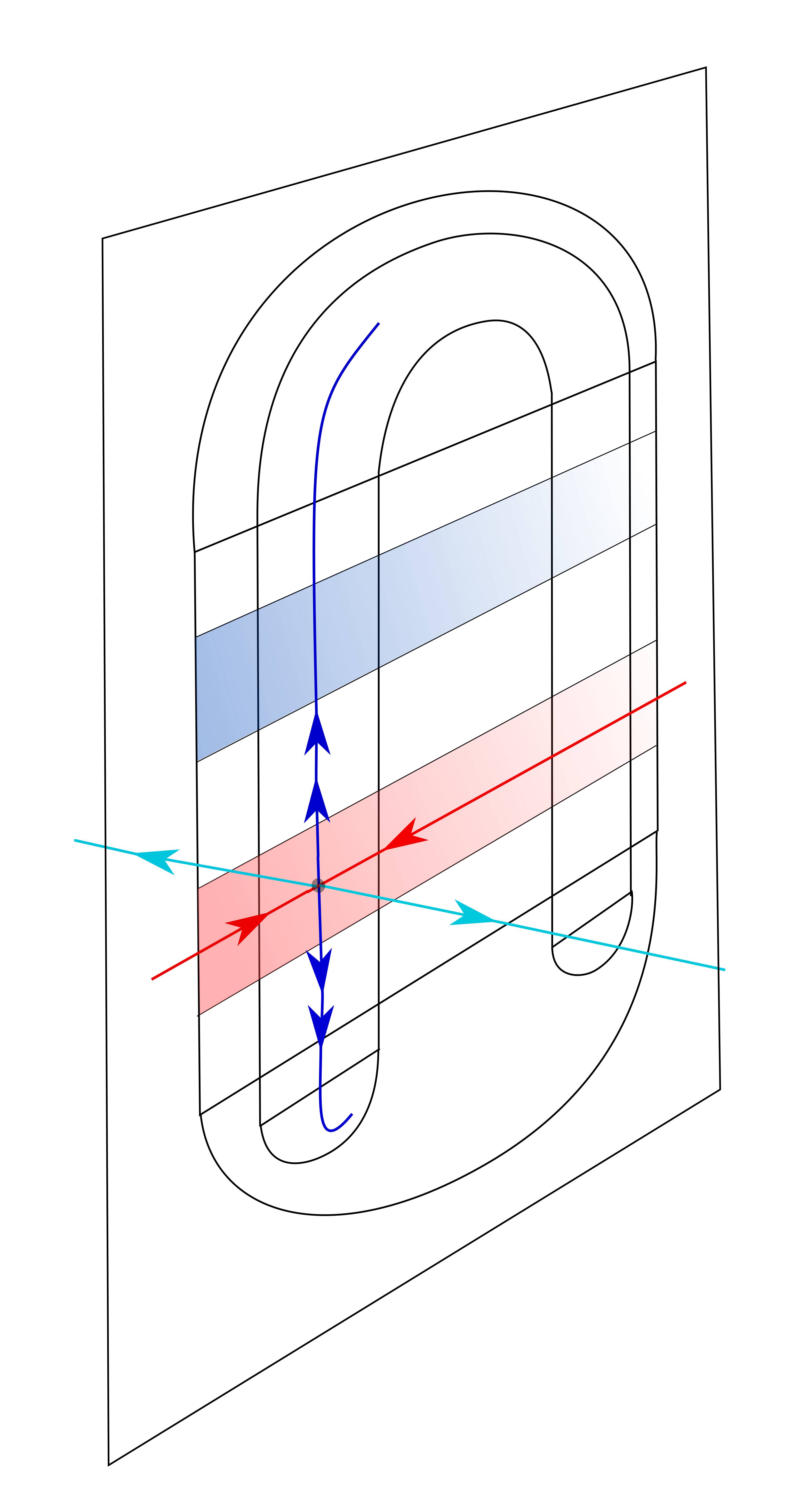}
 \caption{Non-normally hyperbolic dynamics.} \label{fig:N}
\end{figure}

In this way, one gets a hyperbolic set (of index one)  contained in a non-normally hyperbolic (local) manifold. Persistence of hyperbolicity implies that this horseshoe has continuations for small perturbations of the dynamics. However, since the horseshoe is contained in a non-normally hyperbolic square, the new horseshoes are in general not contained in a local surface. It turns out that
appropriate perturbations of the initial dynamics provide blender-horseshoes.
For a complete discussion of this construction (and also with explicit formulae) we refer to~\cite{BonDiaVia:95}
(note that in \cite{BonDiaVia:95} the term blender is not used).

An interesting question is to provide explicit examples of maps (with an explicit analytic formula) 
exhibiting  blender-horseshoes. This leads to   
the second ingredient of this paper, a family of endomorphisms so-called 
{\emph{center-unstable H\'enon-like families,}} see equation~\eqref{e.henonlike}.
We recall that in the two-dimensional case, H\'enon-like maps are a fundamental ingredient in the study of homoclinic bifurcations
which provide
 a ``limit dynamics'': there exists a sequence of bifurcation parameters providing a sequence of return maps at the homoclinic tangency converging to  
a  H\'enon-like map in suitable 
rescaled coordinates. This construction, known as \textit{renormalisation scheme}, when performed at  homoclinic tangencies
allows to translate (robust) properties of the H\'enon-like family 
to the dynamics of diffeomorphisms nearby the bifurcating one, for details see~\cite[Chapter 3]{PalTak1995}. Two remarkable
examples of
such portable properties are the persistence of homoclinic tangencies \cite[Chapter 3]{PalTak1995} and the existence on strange attractors \cite{MV}. 

In view of  the above discussion, it is natural to ask about renormalisation schemes and limit dynamics in  heterodimensional settings. In this direction, in~\cite{DKS} it is considered a heterodimensional cycle
(associated to a pair of saddles of indices one and two)
involving a  heteroclinic orbit corresponding to the tangential contact of the two-dimensional invariant manifolds  of the saddles.
 This heteroclinic orbit is called a \textit{heterodimensional tangency}, see \cite{DNP}. In \cite{DKS}
 it is provided a renormalisation scheme 
 whose limit dynamics is a center-unstable H\'enon-like family.
This discussion justifies the following 
technical remark. On the one hand, the theory of homoclinic bifurcations and renormalisation schemes requires at least $C^2$-regularity
 of the diffeomorphisms\footnote{Besides the regularity of the maps, necessary for the convergence of the renormalisation scheme, 
 another key fractal-like ingredient is the thickness of a hyperbolic set, which has a radically different behaviour in the $C^1$ and $C^2$-topologies,
 see~\cite{Ure:1995} and ~\cite{MO}.}.
 On the other hand, the construction of robustly  non-hyperbolic dynamics (robust cycles and tangencies)  associated to heterodimensional cycles is mostly developed in the $C^1$-case\footnote{The starting point of this progress is due to the development of a series of typically $C^1$-tools  (started with Pugh's $C^1$ closing lemma and with Franks derivative perturbation lemma) that to the current date have no equivalents in $C^r$-topologies with $r>1$. On the other hand, $C^1$-regularity is not sufficient to some results requiring control of the distortion.}.
Thus, an interesting problem is to develop these theories in higher regularity.

 First, for direct approach dealing with perturbation of product dynamics (a hyperbolic part times the identity) we refer to
 ~\cite{BR:2017}.  
On the other hand, bifurcations of heterodimensional tangencies 
seem to be an appropriate setting
for obtaining robustly non-hyperbolic dynamics in high regularity, see for instance
\cite{KS:2012} where
 $C^2$-robust heterodimensional tangencies and $C^2$-robust heterodimensional cycles involving heterodimensional tangencies are obtained using blenders and the results of~\cite{PV}. 
Our results are motivated by the ideas of \cite{DKS}, where
blenders are generated at the bifurcation of heterodimensional cycles in high regularity topologies.
More precisely, in   \cite{DKS} 
 blender   are obtained for some (open) range of parameters of the
center-unstable H\'enon-like family  and some applications (involving a renormalisation scheme) are given for the bifurcation of heterodimensional cycles  
 in high regularity (in the spirit of
\cite{PalTak1995}). 
In this paper, we prove that the blenders
obtained in~\cite{DKS} are indeed blender-horseshoes. This step will allow (in further applications) to improve versions of
\cite[Theorem 1.4]{DKS}, getting robust cycles and robust tangencies in higher regularity (in the same spirit as in \cite{BDcycles,BDtang}). 
In a forthcoming  paper (see also \cite{Perezthesis}) we will 
introduce a renormalisation scheme 
for some non-transverse heterodimensional cycles (cycles with heterodimensional tangencies)
converging to the center-unstable H\'enon-like family \ref{e.henonlike}and state the persistence of cycles and tangencies (in higher regularity) after its bifurcation.

Finally, let us observe that  \cite{HKOS:2018}  
provides a quite complete numerical analysis of the center-unstable H\'enon family in \eqref{e.henonlike},
showing strong numerical evidences of the occurrence of  blenders in a parameter range wider than the one in \cite{DKS} and  illustrates the vanishing of these blenders beyond this range.
We believe that
the blenders detected in \cite{HKOS:2018} are indeed blender-horseshoes.

It follows the main result of this paper.

\begin{theorem}
\label{t.BH-DKS}
Consider the center-unstable H\'enon-like family of endomorphisms
\begin{equation}
\label{e.henonlike}
G_{(\xi,\mu,\kappa,\eta)}(x,y,z) \eqdef (y,\mu+y^2+\kappa\,y\,z+\eta\,z^2,\xi\,z+y), \quad \xi>1.
\end{equation}
Then there is  $\varepsilon>0$ such that
for every 
$$
\bar \nu =(\xi, \mu, \kappa,\eta)\in 
\mathcal{O}_\varepsilon\eqdef (1.18,1,19)\times (-10,-9)\times (-\varepsilon,\varepsilon)^2
$$
the 
endomorphism $G_{\bar\nu}$
has a blender-horseshoe in the cube 
 $ \Delta\eqdef [-4,4]^2\times[-40,22].$

As a consequence, 
every diffeomorphism or endomorphism sufficiently $C^1$-close to $G_{\bar \nu}$ has a 
blender-horseshoe in $\Delta$.
\end{theorem}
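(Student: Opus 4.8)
The plan is to verify directly the defining conditions \BHa--\BHf of a blender-horseshoe (Definition~\ref{d.BH}) for the endomorphism $G_{\bar\nu}$ on $\Delta$, keeping all the relevant inequalities strict and uniform for $\bar\nu$ in the compact box $\overline{\mathcal{O}_\varepsilon}$; since that list of conditions is $C^{1}$-open, this simultaneously yields the statement for every $\bar\nu\in\mathcal{O}_\varepsilon$ and for every diffeomorphism or endomorphism $C^{1}$-close to $G_{\bar\nu}$, which is the ``consequence'' part of the theorem. (Alternatively one may take as input the blender produced in~\cite{DKS} for this parameter range, together with the cone fields and generating boxes constructed there, and check only the extra requirements --- local maximality, conjugacy to the full $2$-shift, and the sharp form of the superposition property below; the argument is the same.) The first observation is that the last two coordinates of $G_{\bar\nu}$ do not involve $x$, so $G_{\bar\nu}$ is the endomorphism $(x,y,z)\mapsto(y,H_{\bar\nu}(y,z))$ with $H_{\bar\nu}(y,z)=(\mu+y^{2}+\kappa yz+\eta z^{2},\ \xi z+y)$. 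Thus the $x$-direction is totally collapsed and plays the role of the (super-)strong stable direction; the relevant invariant set $\Lambda$ of $G_{\bar\nu}$ in $\Delta$ projects onto the maximal invariant set $R\subset[-4,4]\times[-40,22]$ of $H_{\bar\nu}$, and $W^{s}_{\loc}(\Lambda)=[-4,4]\times R$. It is the ``fattened'' behaviour of this last set that encodes the blender property.

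Fix $\kappa=\eta=0$ first and treat $\kappa,\eta$ as perturbations. For $\mu<-4$ the quadratic map $q_{\mu}(y)=\mu+y^{2}$ satisfies $q_{\mu}(\pm\sqrt{-4-\mu})=-4$ and $q_{\mu}(\pm\sqrt{4-\mu})=4$, hence $q_{\mu}^{-1}([-4,4])=J_{1}\sqcup J_{2}$ with $J_{2}=[\sqrt{-4-\mu},\sqrt{4-\mu}]$ and $J_{1}=-J_{2}$, each mapped diffeomorphically onto $[-4,4]$; for $\mu\in(-10,-9)$ one has $J_{1}\subset(-4,0)$, $J_{2}\subset(0,4)$ and $|q_{\mu}'|\ge 2\sqrt{-4-\mu}>4$, so $q_{\mu}$ on $\bigcap_{n\ge 0}q_{\mu}^{-n}([-4,4])$ is a uniformly expanding horseshoe conjugate to the full $2$-shift. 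This yields the two generating sub-boxes $B_{i}$ (the components of $G_{\bar\nu}^{-1}(\Delta)\cap\Delta$) and the horseshoe/local-maximality conditions. For hyperbolicity, a thin cone field $\mathcal{C}^{uu}$ around the $y$-axis is $DG_{\bar\nu}$-invariant and uniformly expanded on $\Delta$ --- the $y$-component dominates with factor $|2y|\ge 2\sqrt{-4-\mu}$, and the required slope bound $m$ exists as soon as $2\sqrt{-4-\mu}>\xi$, which holds with a uniform margin since $\xi<1.19<2\sqrt5$; the center-unstable cone $\mathcal{C}^{cu}$ around the $(y,z)$-plane is likewise $DG_{\bar\nu}$-invariant and expanded, the bottleneck rate being $\xi>1$; and the complementary strong-stable cone around the $x$-axis is sent to $0$. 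Finally the fixed points $p_{i}\in B_{i}$ (continuations of the fixed points of $q_{\mu}$, with $z$-coordinates $\approx-y_{i}/(\xi-1)$ near the two ends of $[-40,22]$) have local invariant manifolds crossing $\Delta$ in the expected directions, and their unstable manifolds bound a nonempty central band in the $z$-direction. These give conditions \BHa--\BHd, with all inequalities strict and with margins uniform on $\overline{\mathcal{O}_\varepsilon}$ once $|\kappa|,|\eta|<\varepsilon$ for $\varepsilon$ small.

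The heart of the matter is the superposition (blender) condition: every $uu$-disk $D$ --- a curve tangent to $\mathcal{C}^{uu}$ crossing $\Delta$ along the $y$-direction and with $z$-range inside a suitable central band $\mathcal{S}\subset(-40,22)$ --- meets $W^{s}_{\loc}(\Lambda)=[-4,4]\times R$. Projecting $D$ to the $(y,z)$-plane gives a graph $z=\psi(y)$, $y\in[-4,4]$, with $|\psi'|\le m$; the cone condition forces its $z$-oscillation to be at most $\ell_{0}:=8m$, so $D$ is well described by the centre $c$ of its $z$-range. The two ``children'' $H_{\bar\nu}(D|_{J_{1}})$ and $H_{\bar\nu}(D|_{J_{2}})$ are again $uu$-graphs over all of $[-4,4]$ (since $q_{\mu}(J_{i})=[-4,4]$) whose $z$-centres are, to leading order, $\xi c+m_{1}$ and $\xi c+m_{2}$, where $m_{i}$ is the centre of $J_{i}$; thus $m_{1}<0<m_{2}$ with $m_{2}-m_{1}=\sqrt{4-\mu}+\sqrt{-4-\mu}$, and on the level of centres the renormalised dynamics is the affine iterated function system $c\mapsto\xi c+m_{i}$, $i=1,2$. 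Because $\xi>1$ neither branch contracts, but since $m_{1}<0<m_{2}$, from any centre one branch pushes $c$ to the left and the other pushes it to the right; the point is that the choice $\xi\in(1.18,1.19)$ together with $\mu\in(-10,-9)$ makes this control system admit a forward-invariant interval $\mathcal{S}$ lying strictly inside $[-40,22]$ and between the unstable manifolds of $p_{1}$ and $p_{2}$ --- concretely, there is a strategy $i=i(c)$ for which the chosen child has its whole $z$-range inside $\Delta$ and its centre again in $\mathcal{S}$. Iterating the controlled choice produces an itinerary $(i_{0},i_{1},\dots)$ and a nested sequence of sub-curves $D\supset D^{(1)}\supset D^{(2)}\supset\cdots$ with $G_{\bar\nu}^{n}(D^{(n)})$ a $uu$-disk inside $\Delta$ for all $n$; any point of $\bigcap_{n}D^{(n)}$ lies in $R$, hence in $W^{s}_{\loc}(\Lambda)$. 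This is the blender condition \BHe--\BHf, again stable under small $(\kappa,\eta)$ and $C^{1}$-perturbations.

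I expect the main obstacle to be exactly this last step: arranging the constants so that the affine centre-IFS $c\mapsto\xi c+m_{i}$ has a forward-invariant superposition interval $\mathcal{S}$ sitting strictly between the two unstable manifolds inside $[-40,22]$. This is a genuine constraint, coupling $\xi$ (larger than $1$, so there is real expansion to ``blend'', but not too large, or the children overshoot $\Delta$) with $\mu$ (negative enough that the spread $m_{2}-m_{1}=\sqrt{4-\mu}+\sqrt{-4-\mu}$ is large, but not so negative that $J_{1}\cup J_{2}$ or $\mathcal{S}$ leave $\Delta$), and it must be checked with margins uniform over the whole box $\overline{\mathcal{O}_\varepsilon}$; by contrast the cone and hyperbolicity estimates of the previous paragraphs are routine once $\xi<2\sqrt5$.
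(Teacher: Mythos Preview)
Your plan is correct and matches the paper's architecture: reduce to $\kappa=\eta=0$, verify \BHa--\BHf\ directly for $G_{\xi,\mu}$ on $\Delta$, then invoke $C^{1}$-openness (Remark~\ref{r.electionday}) to extend to $\mathcal{O}_\varepsilon$ and to nearby maps. Your treatment of \BHa--\BHd\ (legs, cone fields, Markov partition, location of the fixed points) is essentially what the paper does in Sections~\ref{ss.legs}--\ref{ss.through}.

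The one substantive difference is how you handle \BHe--\BHf. You parametrise a $\mathrm{uu}$-disc $D$ by the centre $c$ of its $z$-range and track $c$ under the approximate affine IFS $c\mapsto\xi c+m_{i}$, with $m_{i}$ the midpoints of $J_{i}$. The paper instead parametrises $D$ by the $z$-coordinate of its intersection with the slice $\{y=p_{\mu}\}$ (respectively $\{y=q_{\mu}\}$); see Remark~\ref{r.rar}. That choice is sharper: on those slices the third coordinate of $G_{\xi,\mu}$ evolves \emph{exactly} by the affine maps $\phi^{p}_{\xi,\mu}(z)=\xi z+p_{\mu}$ and $\phi^{q}_{\xi,\mu}(z)=\xi z+q_{\mu}$, so no ``to leading order'' qualifier is needed and the verification of \BHe--\BHf\ reduces to a worst-case analysis along the extremal lines on the boundary of the $\mathrm{uu}$-cone (Claim~\ref{cl.keep} and the contradiction argument in Lemma~\ref{l.below}). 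Your centre-IFS picture is morally the same and would also go through, but to make it rigorous you would have to bound the drift between the true $z$-centre of the child disc and $\xi c+m_{i}$, coming from the nonzero slope of $D$ and from the fact that $m_{i}$ is not the $y$-coordinate at which you are measuring; the paper's choice of reference slice simply sidesteps that bookkeeping. What you flag as the ``main obstacle'' (finding the forward-invariant superposition interval) is exactly the numerical content of Claim~\ref{cl.keep} and Lemma~\ref{l.below}, and it is indeed where the specific ranges $(1.18,1.19)$ and $(-10,-9)$ are used.
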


The consequence pointed out in the theorem arises from the $C^1$-persistence of blenders, see Remarks~\ref{r.golpedeestado}
and \ref{r.electionday}. Let us observe that this result is a version of \cite[Theorem 1.1]{DKS} where blenders are replaced by blender-horseshoes in a similar range of parameters.
 
This paper is organised as follows. In Section~\ref{ss.blenders}, we introduce the definitions of blender and blender-horseshoe and state the distinctive property of a blender-horseshoe (Lemmas~\ref{l.sp} and \ref{l.dyn-geo}). In Section \ref{s.localperturbationsgk},
we prove Theorem~\ref{t.BH-DKS}.

\section{Blenders and Blender-horseshoes}
\label{ss.blenders}
\subsection{Blenders}
The notion of a cu-blender (or simply blender) was introduced in~\cite{BonDia:96}, where were used
to generate $C^1$-robust transitivity in the non-hyperbolic setting. 
The main virtue of a blender comes from its special internal geometry: a cu-blender is a transitive hyperbolic set whose (local) stable set robustly “behaves” as manifold of topological dimension larger than the dimension of its stable bundle. We now discuss the (axiomatic) definition of blenders in the three-dimensional case.

\begin{defi}\label{d.blender}
{
\em{($\mathrm{cu}$-Blender, Definition 3.1 in \cite{BDtang})
Let $f:M\to M$ be a three-dimensional diffeomorphism. A transitive hyperbolic compact set $\Lambda$ of index two of $f$  is a cu-\textit{blender} if
there are a 
$C^1$-neighbourhood $\mathcal{U}$ of $f$ and a 
$C^1$-open set $\mathcal{D}$ of embeddings
of one-dimensional discs $D$ into $M$ such that for every $g \in \mathcal{U}$ and every disc $D \in \mathcal{D}$ 
the local stable manifold $W^{\mathrm s}_{\mathrm{loc}}(\Lambda_g)$ of the continuation $\Lambda_g$ intersects $D$. The set $\mathcal{D}$ is called the \textit{region of superposition} of the blender.
}}
\end{defi}
\subsection{Blender-horseshoes}
This kind of blenders was introduced in~\cite{BDtang} as  a mechanism for the generation of $C^1$-robust tangencies in dimension equal to or greater  than three. 
Comparing with the standard blenders, blender-horseshoes satisfy the following additional property:  they are locally maximal invariant sets  conjugate to a complete shift of two symbols. 
These properties 
provide a complete description of its local stable manifold  as well as a nice  geometrical structure:
the local stable manifold of a blender-horseshoe is the Cartesian product of a ``fat Cantor set" by an ``interval", see Remark~\ref{r.golpedeestado}.
We now give the definition of a blender-horseshoe following \cite[Section~3.2]{BDtang}, for further details we refer to that paper. 
As the construction is local, we assume that the ambient space is
$\mathbb{R}^3$. 
We start with some preliminary definitions.

For $a>0$ consider the
interval $\mathrm I_a\eqdef [-a,+a]$ and for $x,y,z\in \mathbb{R}^+$ the
 cube 
 $$
\Delta \eqdef \mathrm{I}_x \times \mathrm I_y \times \mathrm I_z \subset \mathbb{R}^3.
$$
We divide the  boundary $\partial\Delta$  of $\Delta$ into three parts 
as follows:
$$
\partial^{\mathrm{s}}\Delta \eqdef \partial \mathrm I_x \times\mathrm I_y \times\mathrm I_z,
\quad \partial^{\mathrm{uu}}\Delta 
\eqdef  \mathrm I_x \times \partial\mathrm I_y \times\mathrm I_z,
\quad \partial^{\mathrm{u}}\Delta 
\eqdef \mathrm I_x \times \partial(\mathrm I_y \times \mathrm I_z).
$$
Note that $\partial \Delta= \partial^{\mathrm{s}}\Delta\cup \partial^{\mathrm{u}}\Delta$ and
$\partial^{\mathrm{uu}}\Delta
\subset \partial^{\mathrm{u}}\Delta$.

Given
$\theta>0$ and $p\in\mathbb{R}^3$, 
define the $\mathrm{s}$-, $\mathrm{uu}$- and $\mathrm{u}$-cone fields of size $\theta$ as follows
\begin{equation}
\label{e.conefilds}
\begin{split}
\mathcal{C}^\mathrm s_{\theta}(p) & \eqdef
\Big\{(u, v, w)\in \mathbb{R}^3:
\sqrt{v^2 + w^2}<\theta|u|\Big\},
\\
\mathcal{C}^{\mathrm{uu}}_{\theta}(p)& \eqdef
\Big\{(u, v, w)\in\mathbb{R}^3:
\sqrt{u^2 + w^2}<\theta|v|\Big\},
\\
\mathcal{C}^{\mathrm u}_{\theta}(p) & \eqdef
\Big\{(u, v, w)\in\mathbb{R}^3:
|u|<\theta\sqrt{v^2 + w^2}\Big\}.
\end{split}
\end{equation}
Note that $\mathcal{C}^{\mathrm{uu}}_{\theta}(p)\subset 
\mathcal{C}^\mathrm u_{\theta}(p)$.

Related to these cone fields,  we define {\emph{$\mathrm{s}_\theta$-}} and {\emph{$\mathrm{uu}_\theta$-discs}} and 
{\emph{$\mathrm{u}_\theta$-strips}} as follows:
\begin{itemize}
\item
Let $L$ be a  regular curve. We say that $L$ is
 an \textit{$\mathrm{s}_\theta$-disc} if it is  contained in $\Delta$,
$T_pL\subset 
 \mathcal{C}^{\mathrm{s}}_{\theta}(p)$
for each $p\in L$, 
and its end-points are contained in different connected components $\partial^{\mathrm{s}}\Delta$.
Similarly, we say that $L$ is a
\textit{$\mathrm{uu}_\theta$-disc} if $L\subset \mathbb{R} \times I_y \times \mathbb{R}$, 
$T_pL\subset 
\mathcal{C}^{\mathrm{uu}}_{\theta}(p)$ for each $p\in L$, and
its end-points are contained in different connected components 
of $\mathbb{R}\times \partial \mathrm{I}_y \times \mathbb{R}$.
\item
A surface $S\subset \Delta$
is a \textit{$\mathrm{u}_\theta$-strip}
if $T_pS \subset \mathcal{C}^\mathrm u_{\theta}(p)$ for every
$p$ in $S$ and there exists a $C^1$-embedding $E : \mathrm I_y\times \mathrm J\to\Delta$ (where $\mathrm J$ is a
subinterval of $\mathrm I_z$) such that 
$E(\mathrm I_y\times \mathrm J)=S$ and $L(z) \eqdef E(\mathrm I_y \times \{z\})$ is a $\mathrm{uu}_\theta$-disc
 for every
$z\in \mathrm J$. The {\em{width}} of $S$, denoted by $w(S)$,
is the infimum of the length
of the curves in $S$ which are transverse to 
$\mathcal{C}^{\mathrm {uu}}_{\theta}$ and
join the two 
components of $E(\mathrm I_y \times \partial \mathrm J)$
\end{itemize}

\begin{remark}[Right and left classes of $\mathrm{uu}$-discs]
\label{r.leftandright}
{\emph{
In what follows, we fix $\theta, \vartheta>0$. Note that every $\mathrm{s}_\vartheta$-disc $W$ such that $(W\setminus \partial W)$ is contained in the interior
of $\Delta$ defines two different (free)
homotopy classes of $\mathrm{uu}_\theta$-discs
  disjoint from $W$.
This allows us to consider $\mathrm{uu}_\theta$-discs \textit{at the left} and \textit{at the right} of $W$
(corresponding to the two different homotopy classes), denoted  by $\mathcal{U}^\ell_W$ and $\mathcal{U}^r_W$,
respectively.
The \textit{right class} $\mathcal{U}^r_W$ (resp., \textit{left class} $\mathcal{U}^\ell_W$) 
is the class containing the $\mathrm{uu}_\theta$-disc
 $\{0\} \times\mathrm I_y\times\{z^+\}$ (resp., containing the   $\{0\} \times\mathrm I_y\times\{z^-\}$). 
 With a slight abuse of notation, we also denote by 
 $\mathcal{U}^i_W$ the union of the $\mathrm{uu}$-discs in $\mathcal{U}^i_W$, $i=r,\ell$.}}
 
 {\emph{Similarly, a 
$\mathrm{u}$-strip $S$ through $\Delta$ is \textit{at the right}  (resp. \textit{at the left}) of
$W$ if it is foliated  by $\mathrm{uu}$-discs at the
right (resp. \textit{at the left}) of $W$.}}
\end{remark}
%
%

We are now ready to recall the definition of a blender-horseshoe in ~\cite{BDtang}.

\begin{defi}[Blender-Horseshoe] 
\label{d.BH}
{\em{
The maximal invariant $\Lambda_F\eqdef \cap_{i\in\mathbb{Z}}F^i(\Delta)\subset \mathrm{int}(\Delta)$
of a (local) diffeomorphism $F:\Delta\to F(\Delta) \subset \mathbb{R}^3$ is a \textit{blender-horseshoe}
 if   conditions (\BHa)-(\BHf) below hold:
\begin{itemize}
\item[(\BHa)]
{\emph{$\mathrm{s}$- and $\mathrm{u}$-legs }}:
 There are a connected subsets $\mathcal{A}$ and $\mathcal{B}$ of $\Delta$, 
 called {\emph{$\mathrm{s}$-legs of the blender}}, with
$$
\mathcal{A} \cap \mathcal{B} =\emptyset
\quad 
\mbox{and} 
\quad
(\mathcal{A} \cup \mathcal{B})\cap \partial
^{\mathrm{uu}}\Delta =\emptyset
$$
such that
$$
F(\Delta)\cap (\mathbb{R}\times \mathrm{I}_y\times\mathbb{R})
=
F(\mathcal{A}) \cup F(\mathcal{B})
 \subset (x^-,x^+) \times \mathrm{I}_y\times \mathbb{R} .
$$
\end{itemize}
Note that the sets 
$F(\mathcal{A})$ and $F(\mathcal{B})$ are the connected components of
$F(\Delta)\cap (\mathbb{R}\times \mathrm{I}_y\times\mathbb{R})$, they are called the
 {\emph{$\mathrm{u}$-legs of the blender}}. See Figure \ref{fig:legs}
\begin{figure}
\centering
\begin{overpic}[scale=.20,bb=0 0 1418 710,tics=5
]{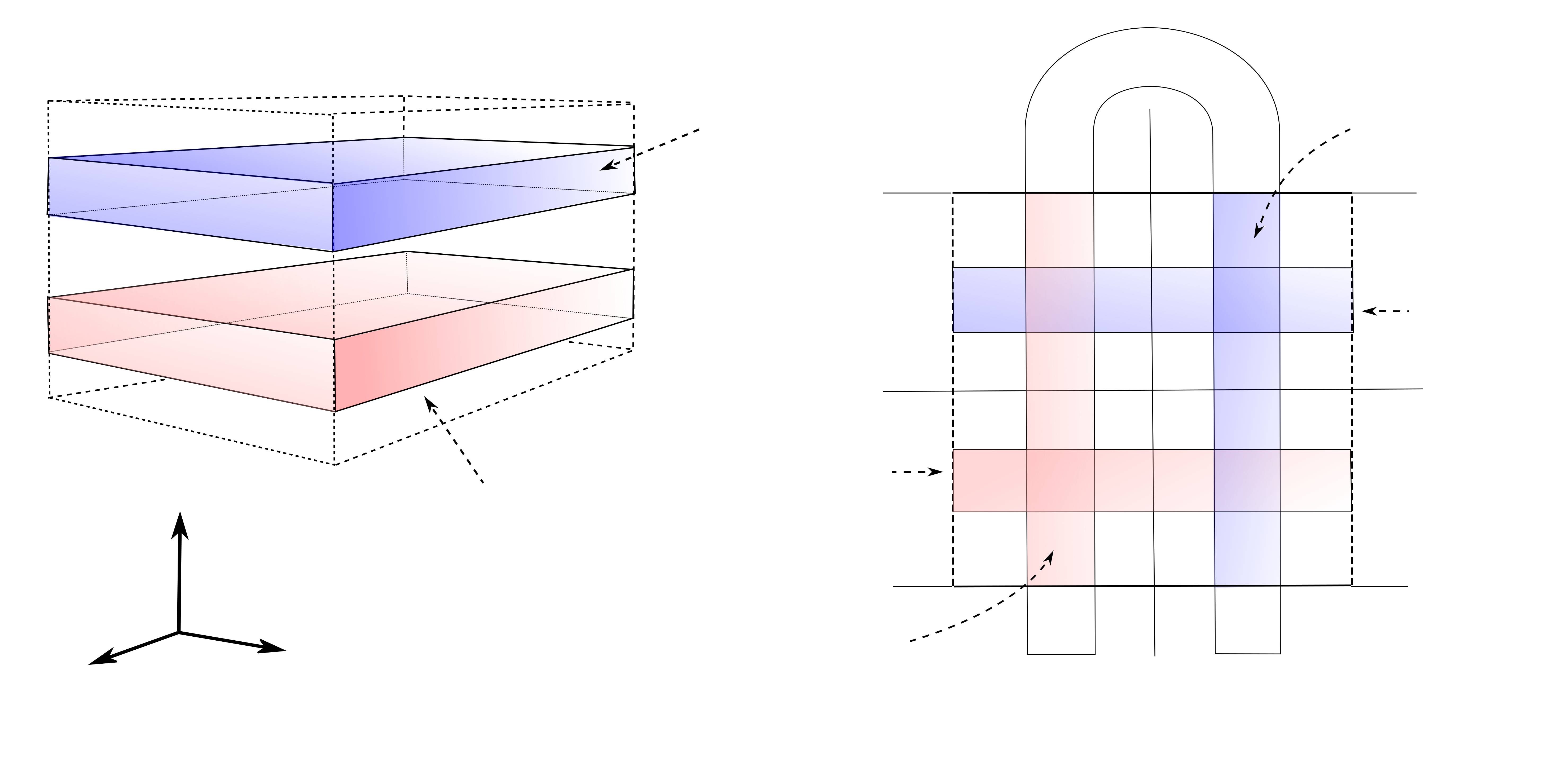}
   \put(23,-1){\small{$(a)$}} 
     \put(45,42){\small{ $\mathcal{B}$}}
      \put(29,16){\small{ $\mathcal{A}$}}
      \put(12,16){\small{$\mathbb{Y}$}}
        \put(17,10){\small{$\mathbb{X}$}}
        \put(5,9.5){\small{$\mathbb{Z}$}}
        \put(72,-1){\small{$(b)$}} 
        \put(53,18){\small{ $\mathcal{A}$}}
         \put(48.5,8){\small{ $F(\mathcal{A})$}}
          \put(90,30){\small{ $\mathcal{B}$}}
         \put(86,42){\small{ $F(\mathcal{B})$}}
          \put(87.5,22){\small{${x}^+$}}
           \put(56.5,26){\small{$x^-$}}
            \put(70,39){\small{$y^+$}}
           \put(74,10){\small{$y^-$}}
   \end{overpic}
\caption{$(a)$ s-legs of the blender-horseshoes. $(b)$ Projection of $F(\Delta)\cap (\mathbb{R}\times \mathrm{I}_y\times\mathbb{R})$ in the plane  $\mathbb{XY}$. }
\label{fig:legs}
\end{figure}
\begin{itemize}
\item[(\BHb)]{\emph{Contracting and expanding  invariant cone fields}}.
There exist $\theta,\vartheta>0$, $\ell\in\mathbb{N}$, $c>1$,
and cone fields $\mathcal{C}_{\vartheta}^{\mathrm{s}}$, 
 $\mathcal{C}_{\theta}^{\mathrm{u}}$,
 and  $\mathcal{C}_{\theta}^{\mathrm{uu}}$
 such that:
\begin{itemize}
\item[(i)] \textit{Strict invariance}:  for every
$p\in \mathcal{A}\cup \mathcal{B}$ we have that
\[
\begin{split}
&DF^\ell_{p}(\mathcal{C}_{\vartheta}^{\mathrm{s}}(p))\supset \mathcal{C}_{\vartheta}^{\mathrm{s}}(F^\ell(p)),
\\
&DF^\ell_{p}(\mathcal{C}_{\theta}^{\mathrm{u}}(p))\subset \mathcal{C}_{\theta}^{\mathrm{u}}(F^\ell(p)),
\,\, \mbox{and} \,\,
DF^\ell_{p}(\mathcal{C}_{\theta}^{\mathrm{uu}}(p))\subset \mathcal{C}_{\theta}^{\mathrm{uu}}(F^\ell(p)).
\end{split}
\]
\item[(ii)]
\textit{Expansion/Contraction.}
For every $v\in \mathcal{C}_{\vartheta}^{\mathrm{s}}(p)$ and 
every $w\in \mathcal{C}_{\theta}^{\mathrm{u}}(p)$ 
we have that
$$
|DF^\ell_pv|\le c^{-1}|v| \,\, \mbox{and} \,\, |DF^\ell_pw|\ge c|w|.
$$
\end{itemize}
\end{itemize}
Conditions (\BHa) and (\BHb) imply the existence of two fixed saddles $P\in \mathcal{A}$ and 
$Q\in \mathcal{B}$, called the
 \textit{reference saddles} of $\Lambda_F$. We define the local stable manifolds of $P$ and $Q$ by 
\begin{equation}\label{e.WW}
W^{\mathrm s}_{\mathrm {loc}}(R)\eqdef
\mbox{connected component of $W^{\mathrm s}(R)\cap \Delta$ containing $R$},
\end{equation}
 where $R=P,Q$.
These local stable manifolds 
are $\mathrm{s}$-discs (in what follows we omit the dependence of $\theta$ and $\vartheta$).
Thus, either
$\mathcal{U}^\ell_{W^{\mathrm s}_{\mathrm {loc}}(P)}\cap \mathcal{U}^r_{W^{\mathrm s}_{\mathrm {loc}}(Q)}\neq \emptyset$ or
 $\mathcal{U}^r_{W^{\mathrm s}_{\mathrm {loc}}(P)}\cap \mathcal{U}^\ell_{W^{\mathrm s}_{\mathrm {loc}}(Q)}\neq \emptyset$. We assume that the first case holds
and denote by
$
\mathcal{U}^b \eqdef \mathcal{U}^\ell_{W^{\mathrm s}_{\mathrm {loc}}(P)}
\cap
\mathcal{U}^r_{W^{\mathrm s}_{\mathrm {loc}}(Q)}.
$
The family of discs $\mathcal{U}^b$ is called the {\emph{superposition region}} 
of the blender-horseshoe.
We say that a $\mathrm{uu}$-disc is {\emph{in between}} if it is contained 
$\mathcal{U}^b$. Similarly, a  $\mathrm{u}$-strip is {\emph{in between}} 
if it is foliated by $\mathrm{uu}$-discs in between.
%

\begin{itemize}
\item[(\BHc)]
{\emph{Markov partition.}} 
The connected components of $F^{-1}(\Delta)\cap\Delta$ are the sets
$$
\mathbb{A}\eqdef F^{-1}(F(\mathcal{A})\cap \Delta)\quad\mbox{and}\quad \mathbb{B}\eqdef F^{-1}(F(\mathcal{B})\cap \Delta),
$$
which satisfy
\[
\begin{split}
\mathbb{A}\cup \mathbb{B}\subset \mathrm{I}_x \times (y^-,y^+) \times (z^-,z^+),\quad 
F(\mathbb{A}) \cup F(\mathbb{B})
 \subset (x^-,x^+) \times \mathrm{I}_y\times \mathbb{R} .
\end{split}
\]
\end{itemize}

\begin{itemize}
\item[(\BHd)]
\textit{$\mathrm{uu}$-discs through the local stable manifolds of $P$ and $Q$}:
 Let $L$ and $L' $ be $\mathrm{uu}$-discs
such that $L \cap W^\mathrm{s}_\mathrm{loc}(P)\neq\emptyset$ 
and 
$L'\cap W^\mathrm{s}
_\mathrm{loc}(Q)\neq\emptyset.
$
Then
\begin{equation*}
 L\cap
\overline{  
  \big(\partial^\mathrm{u}\Delta\setminus
\partial^\mathrm{uu}\Delta\big)}=\emptyset  
  ,\quad 
  L'\cap
  \overline{ \big(\partial^\mathrm{u}\Delta\setminus
\partial^\mathrm{uu}\Delta\big)}=\emptyset.
\end{equation*}
\item[(\BHe)]
\textit{Positions of images of $\mathrm{uu}$-discs}:
Let $L$ be a $\mathrm{uu}$-disc in $\Delta$ and consider
$$
L_\mathcal{C}\eqdef L\cap \mathcal{C}, \quad \mathcal{C}=\mathcal{A}, \mathcal{B}.
$$ 
By $(\BHa)$ and $(\BHb)$, $F(L_{\mathcal{C}})$ is a uu-discs in $\mathrm{I}_x\times \mathrm{I}_y\times\mathbb{R}$.
The relative position
of $F(L_{\mathcal{C}})$ obeys the following rules:
\begin{itemize}
\item [(1)]
if $L\in \mathcal{U}^r_{W^{\mathrm s}_{\mathrm {loc}}(P)}$ then 
$F(L_\mathcal{A})\in \mathcal{U}^r_{W^{\mathrm s}_{\mathrm {loc}}(P)}$,
\item [(2)]
if $L\in \mathcal{U}^\ell_{W^{\mathrm s}_{\mathrm {loc}}(P)}$ then 
$F(L_\mathcal{A})\in \mathcal{U}^\ell_{W^{\mathrm s}_{\mathrm {loc}}(P)}$,
\item [(3)]
 if $L\in \mathcal{U}^r_{W^{\mathrm s}_{\mathrm {loc}}(Q)} $
 then $F(L_\mathcal{B})\in \mathcal{U}^r_{W^{\mathrm s}_{\mathrm {loc}}(Q)}$,
 \item [(4)]
if $L\in \mathcal{U}^\ell_{W^{\mathrm s}_{\mathrm {loc}}(Q)}$ then 
$F(L_\mathcal{B})\in \mathcal{U}^\ell_{W^{\mathrm s}_{\mathrm {loc}}(Q)}$,
 \item [(5)]
if $L\in \mathcal{U}^r_{W^{\mathrm s}_{\mathrm {loc}}(P)}$ 
or  $L\cap W^\mathrm{s}_\mathrm{loc}(P)\neq\emptyset$
 then $F(L_\mathcal{B})\in \mathcal{U}^r_{W^{\mathrm s}_{\mathrm {loc}}(P)}$, and
\item [(6)]
 if $L\in \mathcal{U}^\ell_{W^{\mathrm s}_{\mathrm {loc}}(Q)}$
or  $L\cap W^\mathrm{s}_\mathrm{loc}(Q)\neq\emptyset$
 then $F(L_\mathcal{A})\in \mathcal{U}^\ell_{W^{\mathrm s}_{\mathrm {loc}}(Q)}$. 
 \end{itemize}
\end{itemize} 
\begin{itemize} 
\item[(\BHf)]
 {\textit{Positions of images of $\mathrm{uu}$-discs in  $\mathcal{U}^b$}}:
 Let $L$ be a $\mathrm{uu}$-disc in $\Delta$ such that $L\in\mathcal{U}^b$,
 then either $F(L_\mathcal{A})$
 or $F(L_\mathcal{B})$ is contained in
 $\mathcal{U}^b$. 
\end{itemize}
}}
 \end{defi}
Figure~\ref{fig:Prot} illustrates a prototypical blender-horseshoe. 

We now pointed out some consequences of conditions (\BHa)-(\BHf), see \cite[ Section 3.2.4]{BDtang} for more details.

\begin{remark}{\em{
 \label{r.golpedeestado}
$\,$
\begin{itemize}
\item
The existence of the invariant (contracting or expanding) cone fields in (\BHb) implies
the hyperbolicity (and partial hyperbolicity) of the set $\Lambda_F$:
the set $\Lambda_F$ is hyperbolic and  partially hyperbolic with a dominated splitting
 $$
 T_{\Lambda_F} (\mathbb{R}^3) =
 E^\mathrm{s} \oplus  E^\mathrm{cu} \oplus  E^\mathrm{uu},
 $$
 where $E^\mathrm{s}$ and  $E^\mathrm{u}= E^\mathrm{cu} \oplus  E^\mathrm{uu}$ 
 are the stable and
  unstable bundles of $\Lambda_F$, respectively.
\item
From (\BHa)-(\BHb), one gets that $\{\mathbb{A},\mathbb{B}\}$ is a Markov partition generating
$\Lambda_F$.
 Therefore,  the dynamics of $F$ in $\Lambda_F$ is  hyperbolic and conjugate to the full shift of two symbols. In particular,
the set $\Lambda_F$ contains exactly two fixed points of $F$, 
$P \in \mathbb{A}$ and $Q\in \mathbb{B}$.
\item 
Since $\Lambda_F$ is locally maximal, we have that 
$$
W^{\mathrm s}_{\mathrm{loc}}(\Lambda_F)
\eqdef
\bigcap_{n\in \mathbb{N}}
F^{-n} (\Delta) =
\bigcup_{x\in \Lambda_F}W^{\mathrm s}_{\mathrm{loc}}(x)\subset 
W^{\mathrm{s}} (\Lambda_F),
$$
where $W^{\mathrm s}_{\mathrm{loc}}(x)$ is the  connected component of $W^{\mathrm s}(x)\cap \Delta$ containing $x$. We can  write the local stable manifold  $W^{\mathrm s}_{\mathrm{loc}}(\Lambda_F)$ as the Cartesian product of a Cantor set, say $C$, by an interval. This Cantor set is ``fat" in the following sense:  the projection of $C$ in  the center-unstable direction contains (open) intervals. See Figure~\ref{fig:Prot}-(b).
\item
Conditions (\BHa)-(\BHf) are $C^1$-open.
 Hence if $\Lambda_F$ is a blender-horseshoe of $F$ then  the 
continuation $\Lambda_G$ of $\Lambda_F$ is a blender-horseshoe  for every $G$ sufficiently
  $C^1$-close to $F$
(with the same reference cube $\Delta$).
\end{itemize}
}}
\end{remark}  

\begin{figure}
\centering
\begin{overpic}[scale=.20,bb=0 0 1095 834,tics=5
  ]{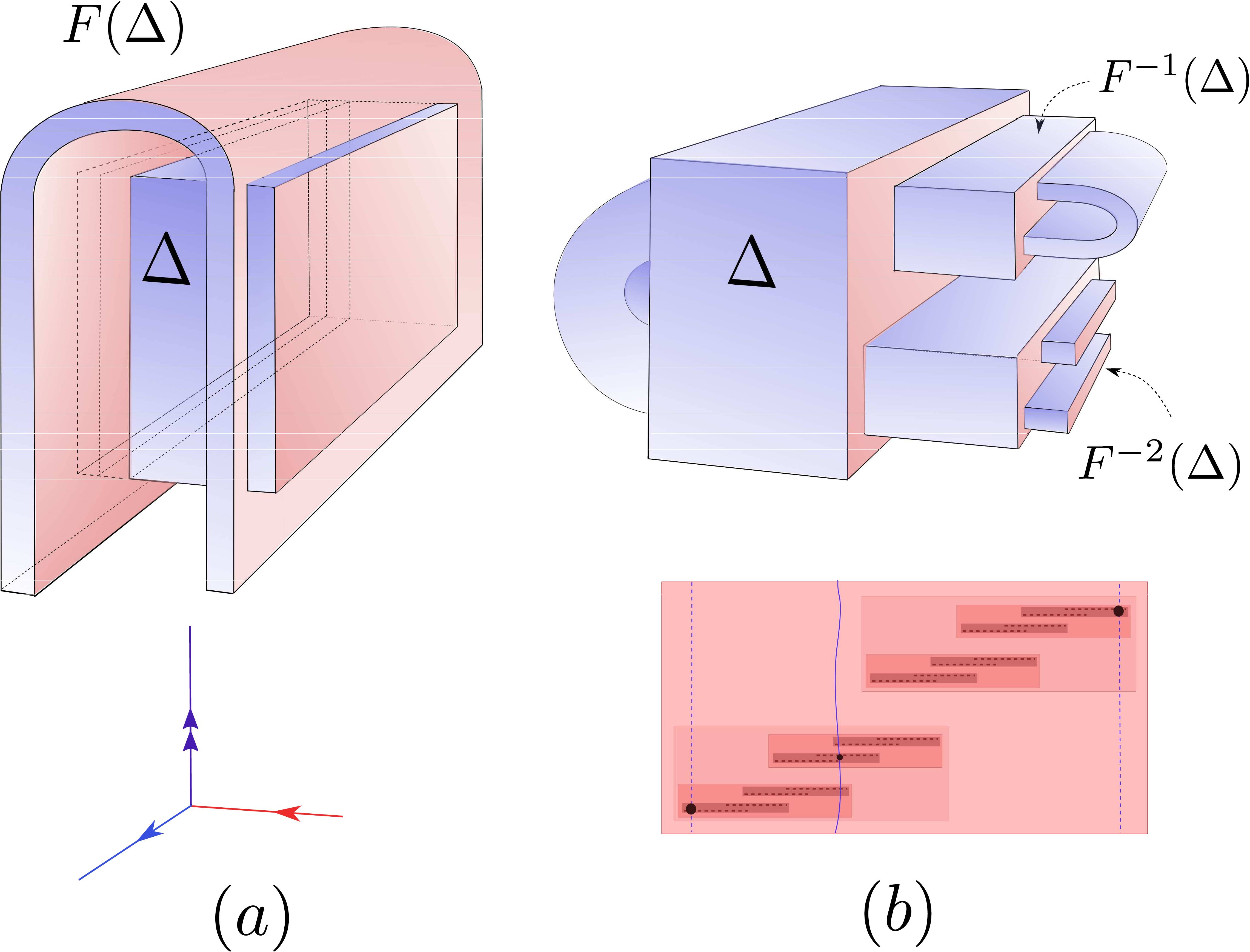}
            \put(48,10){\small{$Q$}}
           \put(92.5,26){\small{$P$}}
            \put(62.5,22){\small{$L$}}
     \put(25,13){\small{$\mathbb{X}$}}
           \put(16.5,22){\small{$\mathbb{Y}$}}
            \put(6,8.5){\small{$\mathbb{Z}$}}       
    \end{overpic}
\caption{$(a)$ Prototypical blender-horseshoe. $(b)$ Projection in plane $\mathbb{YZ}$ of a $\mathrm{uu}$-disc $L$ in the region of superposition of the blender-horseshoe $
\mathcal{U}^b$. }
\label{fig:Prot}
\end{figure}

The next lemma states the
  {\emph{distinctive property}} of a blender-horseshoe.
  
   \begin{lemma}[Lemma 3.13 in \cite{BocBonDia:16}]\label{l.sp}
 For every
 $L\in \mathcal{U}^b$ it holds
 $L \cap W^{\mathrm{s}}_{\mathrm{loc}} (\Lambda_F)\ne\emptyset$.
 \end{lemma}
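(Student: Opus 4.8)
The plan is to exploit the self-similar structure of the superposition region $\mathcal{U}^b$ under forward iteration by $F$, together with the contracting behaviour in the $\mathrm{uu}$-direction encoded in (\BHf) and the expansion in (\BHb)(ii). Fix a $\mathrm{uu}$-disc $L=L_0\in\mathcal{U}^b$. By (\BHf), one of the two sub-discs $F((L_0)_\mathcal{A})$ or $F((L_0)_\mathcal{B})$ lies again in $\mathcal{U}^b$; call it $L_1$ and let $\mathbb{C}_0\in\{\mathbb{A},\mathbb{B}\}$ be the corresponding Markov domain, so $L_1=F((L_0)_{\mathcal{C}_0})\subset\mathcal{U}^b$. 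Iterating, I would construct inductively a sequence of $\mathrm{uu}$-discs $L_n\in\mathcal{U}^b$ and symbols $\mathbb{C}_n$ with $L_{n+1}=F((L_n)_{\mathcal{C}_n})$, and hence a nested sequence of pieces of $L_0$: setting $J_n\eqdef (L_0)\cap F^{-1}((L_1)\cap\cdots) = \{p\in L_0: F^k(p)\in \mathcal{C}_k \text{ for } 0\le k\le n\}$, each $J_n$ is a compact sub-arc of $L_0$ and $J_{n+1}\subset J_n$. These are exactly the points of $L_0$ whose forward orbit stays in $\Delta$ along the prescribed itinerary, so $\bigcap_{n\ge 0} J_n\subset \bigcap_{n\ge 0}F^{-n}(\Delta) = W^\mathrm{s}_\mathrm{loc}(\Lambda_F)$ by the description in Remark~\ref{r.golpedeestado}.

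The core point is then to show $\bigcap_n J_n\ne\emptyset$, which by compactness and nestedness follows once each $J_n$ is nonempty. Nonemptiness of each $J_n$ is built into the construction: at every step (\BHf) guarantees that $(L_n)_{\mathcal{C}_n}$ is a genuine $\mathrm{uu}$-disc (not reduced to a point or empty) whose $F$-image is a $\mathrm{uu}$-disc in $\mathcal{U}^b$, and $F$ is a diffeomorphism, so pulling back along the itinerary keeps $J_n$ a nondegenerate sub-arc of $L_0$. In particular $\bigcap_n J_n$ is a nonempty compact subset of $L_0$, and any point $p$ in it satisfies $p\in L_0\cap W^\mathrm{s}_\mathrm{loc}(\Lambda_F)$, which is the claim. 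It is worth noting that one does not even need the full conjugacy to the shift for this direction; only the ``exists a good branch'' content of (\BHf) is used, iterated infinitely often.

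The step I expect to require the most care is verifying that the inductively produced discs $L_n$ remain $\mathrm{uu}_\theta$-discs \emph{in $\mathcal{U}^b$} at every stage, i.e.\ that (\BHf) can genuinely be re-applied to $L_n$. This needs: (a) the strict invariance of the $\mathrm{uu}$-cone field (\BHb)(i) so that $F((L_n)_{\mathcal{C}_n})$ has tangent lines in $\mathcal{C}^{\mathrm{uu}}_\theta$; (b) the endpoint condition — that $F((L_n)_{\mathcal{C}_n})$ still runs from one component of $\mathbb{R}\times\partial\mathrm{I}_y\times\mathbb{R}$ to the other — which comes from (\BHa), (\BHc) (the $\mathrm{u}$-legs span $\mathrm{I}_y$ in the $y$-direction) and (\BHd)–(\BHe) controlling how images sit relative to $W^\mathrm s_\mathrm{loc}(P)$ and $W^\mathrm s_\mathrm{loc}(Q)$; and (c) membership in $\mathcal{U}^b=\mathcal{U}^\ell_{W^\mathrm s_\mathrm{loc}(P)}\cap\mathcal{U}^r_{W^\mathrm s_\mathrm{loc}(Q)}$, which is precisely the ``in between'' homotopy bookkeeping recorded in Remark~\ref{r.leftandright} and enforced by (\BHf) itself. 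Once this bookkeeping is set up cleanly, the rest is the elementary nested-intervals argument above. I would also remark that the expansion estimate in (\BHb)(ii), while not logically needed for nonemptiness, explains why the arcs $J_n$ shrink and hence why $\bigcap_n J_n$ is (generically) a single point of $W^\mathrm s_\mathrm{loc}(\Lambda_F)$.
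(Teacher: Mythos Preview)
Your proposal is correct and follows essentially the same approach as the paper: iterate (\BHf) to produce a sequence of $\mathrm{uu}$-discs in $\mathcal{U}^b$, pull back to obtain a nested sequence of compact sub-arcs of $L_0$, and conclude by the nested-intersection argument that the limit lies in $W^{\mathrm{s}}_{\mathrm{loc}}(\Lambda_F)=\bigcap_{n\ge 0}F^{-n}(\Delta)$. The only cosmetic issue is the occasional mixing of the symbols $\mathbb{A},\mathbb{B}$ (Markov rectangles) with $\mathcal{A},\mathcal{B}$ (the $\mathrm{s}$-legs used in (\BHf)); once that is cleaned up, your argument matches the paper's proof.
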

 
 \begin{proof}
 Consider $L=L_0'\in \mathcal{U}^b$. By condition (BH6), $F(L)$ contains a disc $L_1'\in \mathcal{U}^b$. We let
 $F^{-1}(L_1')=L_1\subset L$. We  inductively define $L_n\subset L$ and $L_n'\in \mathcal{U}^b$ for 
 $n>1$  as follows. 
 Assuming defined $L_{n-1}'\in \mathcal{U}^b$ and 
 $L_{n-1}\subset L_0$
 with $L_{n-1}'\subset 
 F(L_{n-2}')$ and $F^{-n+1}(L_{n-1}') =L_{n-1}$, we consider $L_{n}'\in \mathcal{U}^b$ contained in $F(L_{n-1}')$ and let $F^{-n}(L_n')=L_n\subset L$. The sequence $(L_n)$ is nested and hence $\emptyset\ne \bigcap_n L_n\subset L$. By construction, $\bigcap_n L_n\subset W^{\mathrm{s}}_{\mathrm{loc}} (\Lambda_F)$.
 \end{proof}
 
 We also have the following refinement  of the above lemma.

\begin{lemma}
\label{l.dyn-geo}
Every $\mathrm{u}$-strip 
in between
 intersects transversely  $W^\mathrm s
(P)$.
 \end{lemma}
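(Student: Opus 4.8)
The plan is to deduce this refinement from Lemma~\ref{l.sp} by ``saturating'' the leaf through which that lemma produces a point of $W^{\mathrm s}_{\mathrm{loc}}(\Lambda_F)$, using that the reference saddle $P$ has dense preimages in $\Lambda_F$.

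First, transversality is for free. If $S$ is a $\mathrm u$-strip and $q\in S\cap W^{\mathrm s}(P)$, then $T_qS\subset\mathcal C^{\mathrm u}_{\theta}(q)$ by definition of a $\mathrm u$-strip, while the tangent line to $W^{\mathrm s}(P)$ at $q$ lies in $\mathcal C^{\mathrm s}_{\vartheta}(q)$: the local stable manifold of any point of $\Lambda_F$ is an $\mathrm s$-disc because $\mathcal C^{\mathrm s}_{\vartheta}$ is $DF^{-\ell}$-invariant by $(\BHb)$, and $W^{\mathrm s}(P)$ is a union of backward iterates of such discs. Since $\mathcal C^{\mathrm u}_{\theta}(q)\cap\mathcal C^{\mathrm s}_{\vartheta}(q)=\{0\}$, this line is not contained in the plane $T_qS$, so the intersection is automatically transverse; hence it suffices to prove that $S\cap W^{\mathrm s}(P)\neq\emptyset$.

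To this end, fix an interior leaf $L$ of the $\mathrm u$-strip $S$; as $S$ is in between, $L\in\mathcal U^b$, so Lemma~\ref{l.sp} yields $q\in L\cap W^{\mathrm s}_{\mathrm{loc}}(\Lambda_F)$, say $q\in W^{\mathrm s}_{\mathrm{loc}}(y)$ with $y\in\Lambda_F$; one may take $q\notin\partial^{\mathrm s}\Delta$, since $L\cap W^{\mathrm s}_{\mathrm{loc}}(\Lambda_F)$ is infinite whereas $L\cap\partial^{\mathrm s}\Delta$ is finite, and then $q$ is an interior point both of $S$ and of $W^{\mathrm s}_{\mathrm{loc}}(y)$. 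Because $\Lambda_F$ is conjugate to the full shift on two symbols (Remark~\ref{r.golpedeestado}) and $P$ is the fixed point whose forward orbit stays in $\mathbb A$, the set of $y'\in\Lambda_F$ with $F^{k}(y')=P$ for some $k\ge0$ is dense in $\Lambda_F$. By the continuous dependence of the local stable manifolds of the hyperbolic set $\Lambda_F$ on the base point, if $y'$ is close to $y$ then $W^{\mathrm s}_{\mathrm{loc}}(y')$ is $C^1$-close to $W^{\mathrm s}_{\mathrm{loc}}(y)$ near $q$; choosing such a $y'$ with $F^{k}(y')=P$ and close enough, the transverse crossing (transverse by the cone argument above) of $W^{\mathrm s}_{\mathrm{loc}}(y)$ with $S$ at $q$ persists, producing $q'\in S\cap W^{\mathrm s}_{\mathrm{loc}}(y')$. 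Since $F^{k}(y')=P$ we have $W^{\mathrm s}_{\mathrm{loc}}(y')\subset W^{\mathrm s}(y')=W^{\mathrm s}(P)$, so $q'\in S\cap W^{\mathrm s}(P)$, with transverse intersection.

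The step I expect to require the most care is making the crossing at $q$ persist, i.e.\ choosing $q$ (equivalently the leaf $L$ and the point of $L\cap W^{\mathrm s}_{\mathrm{loc}}(\Lambda_F)$) so that $S$ and the stable disc through it genuinely cross rather than merely touch; this uses that $W^{\mathrm s}_{\mathrm{loc}}(\Lambda_F)$ is disjoint from $\partial^{\mathrm{uu}}\Delta$ together with the generic choice $q\notin\partial^{\mathrm s}\Delta$, and a small enough ambient ball on which continuity of stable manifolds is applied. A more dynamical alternative, closer to the proof of Lemma~\ref{l.sp}, is to iterate $S$ forward through the $\mathcal B$-leg: by $(\BHe)(3)$ the images stay on the right of $W^{\mathrm s}_{\mathrm{loc}}(Q)$, while by $(\BHb)$ their width grows geometrically, so—a $\mathrm u$-strip in between having bounded width—some forward image must leave the superposition region, and being anchored on the right of $W^{\mathrm s}_{\mathrm{loc}}(Q)$ it can only do so across $W^{\mathrm s}_{\mathrm{loc}}(P)$; pulling this crossing back by $F^{-n}$ gives a point of $S\cap W^{\mathrm s}(P)$. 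In that route the delicate point is instead to prevent the strip from ``jumping over'' $W^{\mathrm s}_{\mathrm{loc}}(P)$ without meeting it, which is where the position rules $(\BHe)$ and $(\BHf)$ are used in combination with the uniform expansion.
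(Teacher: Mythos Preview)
Your primary route (through Lemma~\ref{l.sp} and density of certain stable leaves) is a genuinely different argument from the paper's, and the idea is sound, but one step is wrong as written. You assert that the set of $y'\in\Lambda_F$ with $F^{k}(y')=P$ for some $k\ge0$ is dense in $\Lambda_F$. Since $F|_{\Lambda_F}$ is conjugate to the \emph{two-sided} full shift (Remark~\ref{r.golpedeestado}), it is a bijection on $\Lambda_F$; as $P$ is fixed, the only $y'\in\Lambda_F$ with $F^{k}(y')=P$ is $y'=P$ itself, so this set is certainly not dense. What you need instead is that $W^{\mathrm s}(P)\cap\Lambda_F$ is dense in $\Lambda_F$: symbolically these are the bi-infinite sequences whose right tail is eventually the $\mathbb A$-symbol, and those are dense. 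For such $y'$ one still has $W^{\mathrm s}_{\mathrm{loc}}(y')\subset W^{\mathrm s}(y')=W^{\mathrm s}(P)$, so after this correction your persistence-of-transverse-intersection argument goes through (with the care about interior points that you already flag).

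For comparison, the paper's proof is the direct iterative one and avoids both Lemma~\ref{l.sp} and the stable-lamination continuity step. It introduces the second component $W^{\mathrm s}_0$ of $F^{-1}(W^{\mathrm s}_{\mathrm{loc}}(P))\cap\Delta$, notes that there is $\alpha>0$ such that every $\mathrm u$-strip of width larger than $\alpha$ meets $W^{\mathrm s}_0$, and then uses $(\BHb)$ together with $(\BHf)$ to show that iterating a $\mathrm u$-strip in between either hits $W^{\mathrm s}_{\mathrm{loc}}(P)$ directly or produces a new strip in between of width at least $c'$ times the old one; the width bound $\alpha$ then forces termination. Your alternative sketch at the end is close to this in spirit, but the paper routes the iteration through $(\BHf)$ rather than exclusively through the $\mathcal B$-leg via $(\BHe)(3)$, and the auxiliary disc $W^{\mathrm s}_0$ together with the width threshold is precisely what handles the ``jumping over $W^{\mathrm s}_{\mathrm{loc}}(P)$'' issue you identify.
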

\begin{proof}
Note that $F^{-1} (W^\mathrm{s}_{\mathrm{loc}}(P) )\cap \Delta$ consists of
two connected components. We denote by $W^{\mathrm s}_0$ the connected component
that does not contain $P$. Note that this set is an $\mathrm{s}$-disc. Observe that there is $\alpha>0$ such that every $\mathrm{u}$-strip $S$ with
$w(S)>\alpha$  intersets $W^{\mathrm s}_0$ transversely.
Conditions (\BHb) and (\BHf)
imply that the width of a $\mathrm{u}$-strip $S\subset \Delta$ in between grows exponentially after iterations by $F$
(for simplicity let us assume that $\ell$ in (\BHb) is $\ell=1$):
there is $c'>1$ (independent of the strip) such that
there are two possibilities, either $F(S)$ intersects (transversely) 
$W^\mathrm{s}_{\mathrm{loc}}(P) $ or 
$F(S)$
contains a $\mathrm{u}$-strip $S'$ in between  such that  $w (S') > c'w(S)$.

Take now a $\mathrm{u}$-strip $S=S_0$ in between. If $S\cap W^{\mathrm s}_0\ne \emptyset$ we are done.
Otherwise we consider $F(S)$. If $F(S)$ intersects either  $W^{\mathrm s}_0$ or
$W^\mathrm{s}_{\mathrm{loc}}(P)$ we are also done. Otherwise we get 
a new $\mathrm{u}$-strip $S_1$ in between contained in $F(S_0)$ with $w(S_1)> c'w(S_0)$. 
We now argue inductively, at some step we get a first $n$ such that either $F(S_n)$ intersects
$W^{\mathrm s}_0$ or
$W^\mathrm{s}_{\mathrm{loc}}(P)$ or $w(S_n)>\alpha$ and hence $S_n$ intersects $W^\mathrm{s}_0$.
In both cases, we are done.
This proves the lemma.
\end{proof}

 \subsubsection{Blender-horseshoes for endomorphisms}
 \label{ss.Blender-horseshoesforendomorphisms}
For endomorphisms the blender hor\-se\-shoe are defined as in the case of  diffeomorphisms.
\begin{defi}[Blender-horseshoes for endomorphisms]
\label{d.BHtoEndo}
{\em{ 
The maximal invariant set ${\Lambda}_G:=
\bigcap_{i\in\mathbb{Z}} G^i(\Delta) \subset \mathrm{int}(\Delta)$ 
of an endomorphism $G:\Delta\to\mathbb{R}^3$ is a \textit{blender-horseshoes}
 if $G$ satisfies the conditions (\BHa)-(\BHf). 
}}
\end{defi}
 
\begin{remark}[Continuations of blender-horseshoes for endomorphisms]
\label{r.electionday}
{\em{
Assume\newline that the endomorphism $G$ has a blender-horseshoe in $\Delta$.
Then every diffeomorphism or endomorphism
$F$ such that $F|_\Delta$  is sufficiently close to $G|_{\Delta}$ has a blender-horseshoe in $\Delta$.
}}
\end{remark} 

\section{Proof of Theorem~\ref{t.BH-DKS}}
\label{s.localperturbationsgk}
Theorem~\ref{t.BH-DKS} is  a consequence of following result and Remark~{r.electionday}.

\begin{teo}
\label{t.BH}
For every $(\xi,\mu)\in \mathcal{P}\eqdef (1.18,1.19)\times(-10,-9)$, the en\-do\-morphism
$$
G_{(\xi,\mu,0,0)}(x,y,z)=(y,\mu+y^2,\xi\,z+y) 
$$
has a blender-horseshoe in $\Delta=[-4,4]^2\times[-40,22]$.
\end{teo}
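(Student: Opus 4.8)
The plan is to verify conditions \BHa--\BHf\ of Definition~\ref{d.BH} (in the endomorphism sense of Definition~\ref{d.BHtoEndo}) directly for $F\eqdef G_{(\xi,\mu,0,0)}$ on $\Delta=\mathrm I_x\times\mathrm I_y\times\mathrm I_z$ with $\mathrm I_x=\mathrm I_y=[-4,4]$ and $\mathrm I_z=[-40,22]$. The structural observation is that $F(x,y,z)=(y,\,q_\mu(y),\,\xi z+y)$ with $q_\mu(y)=\mu+y^2$ has its first two coordinates independent of $z$: the $(x,y)$-block is a degenerate H\'enon map driven by the quadratic map $q_\mu$ (a Smale horseshoe on $[-4,4]$ once $\mu<-4$), and $z$ follows the weak expansion $z\mapsto\xi z+y$. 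Accordingly $DF$ is the constant-in-$x$ matrix with rows $(0,1,0)$, $(0,2y,0)$, $(0,1,\xi)$, so on the legs (where $|y|$ is of order $\sqrt{-\mu}$) the $y$-direction is strongly expanded, the $z$-direction weakly expanded ($\xi>1$) and the $x$-direction collapsed -- precisely the ``horseshoe $\times$ weak expansion'' picture of Figure~\ref{fig:N}. I would take the $\mathrm s$-legs to be $\mathcal A=\mathrm I_x\times\mathrm I_A\times\mathrm I_z$ and $\mathcal B=\mathrm I_x\times\mathrm I_B\times\mathrm I_z$, where $\mathrm I_A=q_\mu^{-1}([-4,4])\cap(-\infty,0)$ and $\mathrm I_B=q_\mu^{-1}([-4,4])\cap(0,\infty)$, and the reference saddles to be the two fixed points $P=(y_-,y_-,z_-)$, $Q=(y_+,y_+,z_+)$, with $y_\pm=\tfrac12\bigl(1\pm\sqrt{1-4\mu}\bigr)$ the fixed points of $q_\mu$ and $z_\pm=y_\pm/(1-\xi)$; note $y_-<0<y_+$, $z_+<0<z_-$, and that the identities $q_\mu(y_\pm)=y_\pm$, $z_\pm=\xi z_\pm+y_\pm$ will do most of the work below.

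Conditions \BHa\ and \BHc\ are bookkeeping for the quadratic horseshoe: one computes $F^{-1}(\Delta)\cap\Delta=\{(x,y,z)\in\Delta:y\in\mathrm I_A\cup\mathrm I_B,\ \xi z+y\in\mathrm I_z\}$, whose two connected components are the Markov pieces $\mathbb A$ and $\mathbb B$, and all the required inclusions (disjointness of the legs, $\mathcal A\cup\mathcal B$ missing $\partial^{\mathrm{uu}}\Delta$, $\mathbb A\cup\mathbb B\subset\mathrm I_x\times(y^-,y^+)\times(z^-,z^+)$, $F(\mathbb A)\cup F(\mathbb B)\subset(x^-,x^+)\times\mathrm I_y\times\mathbb R$, and $\Lambda_F\subset\mathrm{int}(\Delta)$) reduce to a handful of scalar inequalities in $(\xi,\mu)$ -- e.g.\ $\sqrt{4-\mu}<4$ for the $y$-horseshoe to fit, and $\sqrt{4-\mu}<22(\xi-1)$ so that the weakly expanded $z$-strips stay in the interior of the fixed cube -- that are immediate on $\mathcal P$. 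For \BHb\ one reads the cone estimates off $DF$: the strong expansion is $|q_\mu'(y)|=2|y|\ge 2\sqrt{-4-\mu}>1$, the weak expansion is $\xi>1$, and the $x$-direction is totally contracted (the $\mathcal C^{\mathrm s}$-invariance being understood as for endomorphisms -- the local stable sets of points of $\Lambda_F$ are exactly the $x$-fibres, which $F$ collapses and whose $F$-preimages recover them). The only delicate point here is the size of $\theta$: the inverse branches $a(w)=-\sqrt{w-\mu}$, $b(w)=+\sqrt{w-\mu}$ of $q_\mu$ on $[-4,4]$ have slope $\tfrac1{2\sqrt{w-\mu}}$, and the $z$-coordinate of the image of a $\mathrm{uu}$-disc picks up the summand $y$ from $z'=\xi z+y$, so $F$ maps $\mathrm{uu}_\theta$-discs to $\mathrm{uu}_\theta$-discs only if $\theta$ is not too small, while $\theta$ must be small enough that $\mathrm u$-cone vectors are still expanded ($\xi^2>1+\theta^2$) and \BHd\ holds; a fixed window such as $\theta\in(0.4,0.6)$ works uniformly on $\mathcal P$.

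The geometric content is \BHd--\BHf. Write a $\mathrm{uu}_\theta$-disc as a graph $y\mapsto(\phi(y),y,\psi(y))$, $y\in[-4,4]$, with $\sqrt{(\phi')^2+(\psi')^2}<\theta$, so $\psi$ is $\theta$-Lipschitz. For $\mathcal C\in\{\mathcal A,\mathcal B\}$ the image $F(L_{\mathcal C})$, reparametrised by $w=q_\mu(y)\in[-4,4]$, is the graph $w\mapsto\bigl(\iota(w),\,w,\,\xi\,\psi(\iota(w))+\iota(w)\bigr)$, with $\iota=a$ for $\mathcal C=\mathcal A$ and $\iota=b$ for $\mathcal C=\mathcal B$. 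Since $W^{\mathrm s}_{\mathrm{loc}}(P)$ and $W^{\mathrm s}_{\mathrm{loc}}(Q)$ are the $x$-fibres through $(y_-,z_-)$ and $(y_+,z_+)$, the left/right class of a $\mathrm{uu}$-disc relative to $W^{\mathrm s}_{\mathrm{loc}}(P)$ (resp.\ $W^{\mathrm s}_{\mathrm{loc}}(Q)$) is decided by its $z$-profile evaluated at $y=y_-$ (resp.\ $y=y_+$); and the fixed-point identities give $a(y_-)=y_-$, $b(y_+)=y_+$, $a(y_+)=-y_+$, $b(y_-)=-y_-$, together with $z_\pm=\xi z_\pm+y_\pm$. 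Plugging these into the transfer formula makes \BHe(1)--(4) automatic -- the side of $F(L_{\mathcal A})$ relative to $W^{\mathrm s}_{\mathrm{loc}}(P)$ is governed by $\xi\psi(y_-)+y_-$ versus $\xi z_-+y_-$, i.e.\ by $\psi(y_-)$ versus $z_-$, because $\xi>0$ preserves the inequality -- and reduces \BHe(5)--(6) to two threshold comparisons, $\psi(-y_+)$ versus $\zeta_1=(2-\xi)z_+/\xi$ and $\psi(-y_-)$ versus $\zeta_2=(2-\xi)z_-/\xi$. Finally, for \BHf\ note that $L\in\mathcal U^b$ means its profile lies in the superposition band, $\psi(y_-)<z_-$ and $\psi(y_+)>z_+$; then $F(L_{\mathcal A})\in\mathcal U^b$ exactly when $\psi(-y_+)>\zeta_1$, $F(L_{\mathcal B})\in\mathcal U^b$ exactly when $\psi(-y_-)<\zeta_2$, and since $\psi$ is $\theta$-Lipschitz and
$$
\zeta_2-\zeta_1 \;=\; \frac{(2-\xi)(z_--z_+)}{\xi} \;=\; \frac{(2-\xi)\sqrt{1-4\mu}}{\xi(\xi-1)}
$$
is large -- of order $20$ on $\mathcal P$ -- compared with the possible variation $\lesssim(y_+-y_-)\,\theta$ of $\psi$ over the relevant $y$-range, at least one of the two always holds. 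Condition \BHd\ follows in the same vein: a $\mathrm{uu}$-disc through $W^{\mathrm s}_{\mathrm{loc}}(R)$ has profile within $z_R\pm 8\theta$, which stays off the faces $\{z=-40\}$ and $\{z=22\}$ because $z_\pm$ are comfortably interior to $\mathrm I_z$.

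The main obstacle is \BHf, equivalently the distinctive property of Lemma~\ref{l.sp}: one must show that the band $(z_+,z_-)$ of admissible $\mathrm{uu}$-disc profiles is \emph{covered} by the two sub-ranges on which $F(L_{\mathcal A})$, resp.\ $F(L_{\mathcal B})$, return to $\mathcal U^b$ -- and this must hold for \emph{every} $\mathrm{uu}$-disc in $\mathcal U^b$, with its $O(\theta)$ profile wobble accounted for, uniformly over the two-parameter family and with the cube $\Delta$ fixed in advance. That $\xi$ is close to $1$ (weak expansion) is exactly what provides the room: it pushes $z_-$ and $z_+$ far apart, so the covering is robust; it is also why $\Delta$ must be so tall in $z$ and why the admissible window for $(\xi,\mu)$ is the specific rectangle of the statement -- these are precisely the constants making the finitely many explicit inequalities collected above hold simultaneously. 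Everything else is elementary one-variable calculus, and Theorem~\ref{t.BH-DKS} then follows by the $C^1$-persistence of blender-horseshoes (Remark~\ref{r.electionday}).
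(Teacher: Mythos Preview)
Your proposal is correct and follows essentially the same approach as the paper: both verify \BHa--\BHf\ directly on $\Delta$, with the same $\mathrm s$-legs $\mathcal A=\mathrm I_x\times q_\mu^{-1}([-4,4])\cap(-\infty,0)\times\mathrm I_z$ and $\mathcal B$, the same reference saddles, and cone size $\theta=1/2$. Your graph-parametrisation $y\mapsto(\phi(y),y,\psi(y))$ of $\mathrm{uu}$-discs and the reduction of \BHe(5)--(6) and \BHf\ to inequalities on the profile $\psi$ at the four points $\pm y_\pm$ is exactly the paper's argument (projection onto the $\mathbb{YZ}$-plane and worst-case boundary lines of the $\mathrm{uu}$-cone, as in Claim~\ref{cl.keep} and Lemma~\ref{l.below}) recast in slightly cleaner notation.
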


The proof of this theorem involves some preliminary steps. 
First, for  the endomorphisms 
$G_{\xi,\mu}\eqdef G_{(\xi,\mu,0,0)}$, where
$ (\xi,\mu)\in \mathcal{P}$,
we study their hyperbolic fixed points
and their invariant manifolds. 
As we will see, these fixed points will be the reference saddles of the blender-horseshoe of $G_{\xi,\mu}$ in $\Delta$.

\subsection{Hyperbolic fixed points of $G_{\xi,\mu}$}
\label{ss.propiedades}
We calculate the hyperbolic fixed points of $G_{\xi,\mu}$ and their invariant manifolds. 
\begin{lemma}
\label{l.Hyperbolic-fixed-points}
For every $(\xi,\mu)\in \mathcal{P}$, the endomorphism $G_{\xi,\mu}$ has two hyperbolic fixed saddles 
$P_{\xi,\mu}=(p_{\xi,\mu}, p_{\xi,\mu}, \tilde p_{\xi,\mu})$ and 
$Q_{\xi,\mu}=(q_{\xi,\mu}, q_{\xi,\mu}, \tilde q_{\xi,\mu})$
in  $\Delta$, where
\begin{equation}
\label{e.defpuntofijo}
\begin{split}
& p_{\xi,\mu} = \mu + (p_{\xi,\mu})^2=
 (1-\xi)\,\tilde p_{\xi,\mu},\quad p_{\xi,\mu}=p_{\mu}=\frac{1-(1-4\,\mu)^{1/2}}{2},
 \\
&q_{\xi,\mu} = \mu + (q_{\xi,\mu})^2=
 (1-\xi)\,\tilde q_{\xi,\mu},\quad q_{\xi,\mu}=q_{\mu}=\frac{1+(1-4\,\mu)^{1/2}}{2}.
\end{split}
\end{equation}
\end{lemma}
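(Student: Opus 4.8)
The plan is to solve the fixed-point equation explicitly, identify the two solutions, and then confirm by elementary interval estimates that they lie in $\Delta$ and are hyperbolic saddles.

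First, I would unfold the equation $G_{\xi,\mu}(x,y,z)=(x,y,z)$ into the three scalar conditions
\[
x=y,\qquad \mu+y^2=y,\qquad \xi\,z+y=z.
\]
Since $\mu<-9$ one has $1-4\mu>0$, so the quadratic $y^2-y+\mu=0$ has exactly the two real roots $p_\mu$ and $q_\mu$ displayed in~\eqref{e.defpuntofijo}. The first condition forces $x=y$, and the third one (recall $\xi\ne 1$) gives $z=y/(1-\xi)$. Hence $G_{\xi,\mu}$ has exactly two fixed points in $\mathbb{R}^3$, namely $P_{\xi,\mu}$ and $Q_{\xi,\mu}$, with $x$- and $y$-coordinates equal to $p_\mu$, resp.\ $q_\mu$, and $z$-coordinates $\tilde p_{\xi,\mu}=p_\mu/(1-\xi)$, resp.\ $\tilde q_{\xi,\mu}=q_\mu/(1-\xi)$, which is precisely the content of~\eqref{e.defpuntofijo}.

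Second, I would check that $P_{\xi,\mu},Q_{\xi,\mu}\in\Delta$ for $(\xi,\mu)\in\mathcal{P}$. From $\mu\in(-10,-9)$ we get $1-4\mu\in(37,41)$, hence $\sqrt{1-4\mu}\in(6,6.41)$, so that $p_\mu\in(-2.71,-2.5)$ and $q_\mu\in(3.5,3.71)$, both inside $[-4,4]$. Moreover $1-\xi\in(-0.19,-0.18)$, so $\tilde p_{\xi,\mu}=p_\mu/(1-\xi)$ is positive with $\tilde p_{\xi,\mu}<2.71/0.18<15.1<22$, while $\tilde q_{\xi,\mu}=q_\mu/(1-\xi)$ is negative with $|\tilde q_{\xi,\mu}|<3.71/0.18<20.7<40$. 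Thus $P_{\xi,\mu}$ and $Q_{\xi,\mu}$ lie in the interior of $[-4,4]^2\times[-40,22]=\Delta$.

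Third, I would verify hyperbolicity by computing the derivative at an arbitrary point,
\[
DG_{\xi,\mu}(x,y,z)=\begin{pmatrix}0&1&0\\ 0&2y&0\\ 0&1&\xi\end{pmatrix},
\]
whose characteristic polynomial is $-\lambda\,(\lambda-2y)(\lambda-\xi)$; hence its eigenvalues are $0$, $2y$ and $\xi$. Evaluating at the two fixed points, $|2p_\mu|>5$ and $|2q_\mu|>7$, while $\xi>1.18>1$ and the remaining eigenvalue $0$ has modulus $<1$. Therefore no eigenvalue lies on the unit circle, so both fixed points are hyperbolic; they are saddles of index one, the one-dimensional contracting direction being the $x$-axis (eigenvalue $0$) and the two expanding directions the eigenlines associated with $2y$ and with $\xi$. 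The only point that requires a little care is the bookkeeping of the interval bounds for the $z$-coordinates, which must land inside the rather lopsided interval $[-40,22]$; everything else follows at once from the explicit formulas.
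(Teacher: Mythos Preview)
Your proof is correct and follows essentially the same approach as the paper: solve the fixed-point system explicitly, obtain interval bounds on $p_\mu,q_\mu,\tilde p_{\xi,\mu},\tilde q_{\xi,\mu}$ to place both points in $\Delta$, and read off the eigenvalues $0$, $\xi$, $2y$ of $DG_{\xi,\mu}$ to conclude hyperbolicity. The paper additionally records the eigenvectors and slightly tighter two-sided bounds on the $z$-coordinates (e.g.\ $13<\tilde p_{\xi,\mu}<15$), but these extras are not needed for the lemma as stated.
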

\begin{proof}
A simple calculation shows that 
$P_{\xi,\mu}=(p_{\mu}, p_{\mu}, \tilde p_{\xi,\mu})$ and 
$Q_{\xi,\mu}=(q_{\mu}, q_{\mu}, \tilde q_{\xi,\mu})$
are the two solutions 
of $G_{\xi,\mu}(x,y,z)=(x,y,z)$.
Using equation \eqref{e.defpuntofijo} and that 
$(\xi,\mu)\in\mathcal{P}$, we get the following estimates for the coordinates of $P_{\xi,\mu}$ and $Q_{\xi,\mu}$:
\begin{equation}\label{e.est}
\begin{split}
-2.7&<
p_\mu  <-2.5, 
\quad
13<
\tilde p_{\xi,\mu}
< 15,
\\
3.5&<
q_\mu
< 3.71,
\quad
-20.6<
 \tilde q_{\xi,\mu}
 < -18.4.
 \end{split}
\end{equation}
Thus, $P_{\xi,\mu}, Q_{\xi,\mu}\in \Delta$.
We observe that  the eigenvalues of  $DG_{\xi,\mu}(P_{\xi,\mu})$, and $DG_{\xi,\mu}(Q_{\xi,\mu})$ are, respectively,
\begin{equation*}
\begin{split}
&\lambda^{\mathrm{s}}(P_{\xi,\mu})=0,
\quad \lambda^\mathrm{cu}(P_{\xi,\mu})=\xi,\quad
 \lambda^\mathrm{uu}(P_{\xi,\mu})=
 2\, p_{\mu},
 \\
&\lambda^{\mathrm{s}}(Q_{\xi,\mu})=0,
\quad \lambda^\mathrm{cu}(Q_{\xi,\mu})=\xi,\quad
 \lambda^\mathrm{uu}(Q_{\xi,\mu})=2\,q_{\mu},
\end{split}
\end{equation*}
 with respective eigenvectors
\begin{equation*}
\begin{split}
&v^{\mathrm s}(P_{\xi,\mu})=(1,0,0), \,\, v^{\mathrm{cu}}(P_{\xi,\mu})=(0,0,1), \,\,
v^{\mathrm{uu}}(P_{\xi,\mu})=\big(2\,p_{\mu}-\xi, 2\,p_{\mu}(\,p_{\mu}-\xi), 2\,p_{\mu}\big),
\\
&v^{\mathrm s}(Q_{\xi,\mu})=(1,0,0), \,\, v^{\mathrm{cu}}(Q_{\xi,\mu})=(0,0,1), \,\,
v^{\mathrm{uu}}(Q_{\xi,\mu})=\big(2\,q_{\mu}-\xi, 2\,q_{\mu}\,(q_{\mu}-\xi), 2\,q_{\mu}\big).
\end{split}
\end{equation*} 
As $\xi>1$ and
$|\lambda^{\mathrm{uu}}(P_{\xi,\mu})|=2\,|\,p_{\mu}|>5$ and $|\lambda^{\mathrm{uu}}(Q_{\xi,\mu})|=2\,|\,q_{\mu}|>7$,
we have that $P_{\xi,\mu}$ and  $Q_{\xi,\mu}$ are hyperbolic fixed points of 
$G_{\xi,\mu}$ for every $(\xi,\mu)\in \mathcal{P}$,
 ending the proof of the lemma.
\end{proof}

\begin{remark}[Invariant directions and foliations]
\label{r.Invariant directions}
{\em{For $R=P,Q$ consider the eigen\-spaces 
\begin{equation*}
\begin{split}
E^{\mathrm s}(R_{\xi,\mu})&\eqdef\mathbb{R}\times\{(0,0)\}\quad\mbox{and}\quad
 E^{\mathrm {cu}}(R_{\xi,\mu})\eqdef\{(0,0)\}\times
 \mathbb{R},
\end{split}
\end{equation*}
associated to the eigenvalues
 $\lambda^{\mathrm{s}}(R_{\xi,\mu})=0$ and 
  $\lambda^\mathrm{cu}(R_{\xi,\mu})=\xi>1$, and consider the straight lines  through $R_{\xi,\mu}$:
$$
\big\{R_{\xi,\mu}
+(t,0,0)
:t\in\mathbb{R}\big\}
\quad\mbox{and}\quad
 \big\{R_{\xi,\mu}
+(0,0,t): t\in\mathbb{R} \big\}.
$$
These lines are, respectively, 
tangent to the eigenspaces
$E^{\mathrm s}(R_{\xi,\mu})$ and 
$E^{\mathrm {cu}}(R_{\xi,\mu})$ at $R_{\xi,\mu}$, 
 and invariant by $G_{\xi,\mu}$:
\begin{equation*}
\begin{split}
&G_{\xi,\mu}\big(R_{\xi,\mu}+(t,0,0)\big)
=R_{\xi,\mu},\quad
G_{\xi,\mu}\big(R_{\xi,\mu}+(0,0,t)\big)
=R_{\xi,\mu}+(0,0,\xi t),
\end{split}
\end{equation*}
for every $t\in\mathbb{R}$.
Moreover, 
\begin{equation}
\label{e.W}
W^{\mathrm{s}}(R_{\xi,\mu})=\big\{R_{\xi,\mu}
+(t,0,0)
:t\in\mathbb{R}\big\}, \quad R=P,Q.
\end{equation}
We define the \textit{center unstable manifold} of $R_{\xi,\mu}$ by
\begin{equation}\label{e.manifolds}
{W}^{\mathrm{cu}}(R_{\xi,\mu})\eqdef \big\{R_{\xi,\mu}
+(0,0,t): t\in\mathbb{R} \big\}, \quad R=P,Q.
\end{equation}

Consider  the endomorphism of $\mathbb{R}^2$ obtained by
 projecting  $G_{\xi,\mu}$ 
into the $\mathbb{YZ}$-plane,
\begin{equation}
\label{e.g}
g_{\xi,\mu}:\mathbb{R}^2\to\mathbb{R}^2,\quad g_{\xi,\mu}(y,z)\eqdef (\mu+y^2,\xi\,z+y).
\end{equation}
This endomorphism
 preserves the foliation
$\mathcal{F}=\big\{\{y\}\times \mathbb{R}:y\in \mathbb{R} \big\}$.
In particular, for $r=p,q$, the leaves
\begin{equation*}
\begin{split}
{W}^{\mathrm{cu}}_{\xi,\mu}(r_{\mu},\tilde r_{\xi,\mu})\eqdef
\big\{(r_{\mu},\tilde r_{\xi,\mu}+t):t\in \mathbb{R}
\big\},
\end{split}
\end{equation*}
are invariant by $g_{\xi,\mu}$.
}}
\end{remark}

\subsection{The legs of the blender-horseshoe}\label{ss.legs}
In this section, we will concentrate on property (\BHa) of  blender-horseshoes.
The definitions of   $\mathrm{s}$- and $\mathrm{u}$-legs  involve some preliminary
 constructions that we describe below.
 
For $\mu\in(-10,-9)$, consider the
points
\begin{equation}
\begin{split}
\label{e.ys}
&a_{\mu}\eqdef -\sqrt{4-\mu},\quad b_{\mu}\eqdef -\sqrt{-4-\mu},\quad c_{\mu}\eqdef \sqrt{-4-\mu},\quad d_{\mu}\eqdef \sqrt{4-\mu}.
\end{split}
\end{equation}
Note that if $\mu\in(-10,-9)$
it holds
\begin{equation}
\label{e.estim} 
\begin{split}
-\sqrt{14}&<a_{\mu}=-d_{\mu}<
-\sqrt{13},
\quad
-\sqrt{6}<b_{\mu}=-c_{\mu}<
-\sqrt{5}.
\end{split}
\end{equation}
Consider the intervals
$\I_{\mu}\eqdef[a_{\mu},b_{\mu}]$ and $\J_{\mu}\eqdef[c_{\mu},d_{\mu}]$. The choice of the parameter $\mu$ and the estimates in \eqref{e.estim} imply that
\begin{equation}\label{e.valoresy}
\I_{\mu}=[a_{\mu},b_{\mu}]\subset (-4,0)\quad\mbox{and}\quad \J_{\mu}=[c_{\mu},d_{\mu}]\subset (0,4).
\end{equation}
Consider the 
sub-cubes of $\Delta$ defined by
\begin{equation}\label{e.AmBm}
\begin{split}
\mathcal{A}_{\xi,\mu}\eqdef[-4,4]\times \I_{\mu}\times[-40,22],\quad 
\mathcal{B}_{\xi,\mu}\eqdef[-4,4]\times \J_{\mu}\times[-40,22].
\end{split}
\end{equation}
From \eqref{e.valoresy} it follows
$$
\mathcal{A}_{\xi,\mu}\cap \mathcal{B}_{\xi,\mu}=\emptyset
\quad
\mbox{and}
\quad
(\mathcal{A}_{\xi,\mu}\cup \mathcal{B}_{\xi,\mu})\cap \partial^{\mathrm{uu}} \Delta=\emptyset.
$$

\begin{remark}
\label{r.fixpoint}
{\em{If $\mu\in (-10,-9)$ then
 $p_{\mu}\in(a_{\mu},b_{\mu})$,
$q_{\mu}\in(c_{\mu},d_{\mu})$, and
 thus $P_{\xi,\mu}\in \mathrm{interior}(\mathcal{A}_{\xi,\mu})$ and $Q_{\xi,\mu}\in \mathrm{interior}(\mathcal{B}_{\xi,\mu})$. 
}}
\end{remark}

Hence the sets  $\mathcal{A}_{\xi,\mu}$ and $\mathcal{B}_{\xi,\mu}$ satisfy the first part of condition (\BHa).
To prove that 
$G_{\xi,\mu}(\mathcal{A}_{\xi,\mu})$ and 
$G_{\xi,\mu}(\mathcal{B}_{\xi,\mu})$ satisfy the second  
part  of (\BHa),
as in the case of the boundary of $\Delta$, we split  the boundary of 
$\mathcal{A}_{\xi,\mu}$  as follows. Let 
\begin{equation*}
\begin{split}
\partial^{\mathrm{uu}} \mathcal{A}_{\xi,\mu}&
\eqdef [-4,4]\times 
\partial \I_{\mu}\times [-40,22],\\
\partial^{\mathrm{u}} 
\mathcal{A}_{\xi,\mu}&\eqdef [-4,4]\times
 \partial\big(\I_{\mu}
 \times[-40,22]\big),\\
\partial^{\mathrm{s}} 
\mathcal{A}_{\xi,\mu}&\eqdef \partial([-4,4])
\times \I_{\mu}\times[-40,22].
\end{split}
\end{equation*}
Note that 
$\partial \mathcal{A}_{\xi,\mu}=
 \partial^{\mathrm{u}} \mathcal{A}_{\xi,\mu}
\cup \partial^{\mathrm{s}} \mathcal{A}_{\xi,\mu}$ and $\partial^{\mathrm{uu}} 
\mathcal{A}_{\xi,\mu}\subset \partial^{\mathrm{u}} 
\mathcal{A}_{\xi,\mu}.$
Analogously, we split the boundary of  $\mathcal{B}_{\xi,\mu}$.

\begin{remark}\label{r.front}
{\emph{
We observe that for $\mathcal{C}=\mathcal{A},\mathcal{B}$ it holds that
\begin{equation*}
\begin{split}
\overline{
\partial \mathcal{C}_{\xi,\mu}\setminus
(\partial^{\mathrm{uu}} \mathcal{C}_{\xi,\mu}\cup 
\partial^{\mathrm{s}} \mathcal{C}_{\xi,\mu}
)}\subset 
\partial^{\mathrm{u}}\Delta\setminus
\partial^{\mathrm{uu}}\Delta,\quad (\xi,\mu)\in\mathcal{P}.
\end{split}
\end{equation*}
Roughly, 
these relations between the boundaries say that the ``front'' and ``rear cover'' of $ \mathcal{A}_{\xi,\mu}$ and $ \mathcal{B}_{\xi,\mu}$ are contained in the ``front'' and ``rear cover'' of $\Delta$, respectively, (see Figure \ref{Fig:BH}).
}}
\end{remark}
\begin{lemma}
\label{l.L1}
For every $(\xi,\mu)\in \mathcal{P}$ it holds 
\begin{itemize}
\item[a)]
$
G_{\xi,\mu}(\Delta)\cap (\mathbb{R}\times [-4,4]\times \mathbb{R})
=G_{\xi,\mu}(\mathcal{A}_{\xi,\mu})\cup 
G_{\xi,\mu}(\mathcal{B}_{\xi,\mu}),
$ 
\item[b)]
$G_{\xi,\mu}(\mathcal{A}_{\xi,\mu})\cup G_{\xi,\mu}(\mathcal{B}_{\xi,\mu})\subset (-4,4)\times [-4,4]\times\mathbb{R}$.
\end{itemize}
 \end{lemma}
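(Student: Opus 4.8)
The strategy is a direct computation with the explicit formula $G_{\xi,\mu}(x,y,z)=(y,\mu+y^2,\xi z+y)$. Write a generic point of $\Delta$ as $(x,y,z)$ with $x\in[-4,4]$, $y\in[-4,4]$, $z\in[-40,22]$; its image has first coordinate $y$, second coordinate $\mu+y^2$, third coordinate $\xi z+y$. The key observation for part~(a) is that the second coordinate of $G_{\xi,\mu}(x,y,z)$ is $\mu+y^2$, so the condition for the image to lie in the slab $\mathbb{R}\times[-4,4]\times\mathbb{R}$ is precisely $-4\le \mu+y^2\le 4$, i.e. $-4-\mu\le y^2\le 4-\mu$. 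Since $\mu\in(-10,-9)$ both bounds are positive, so this says $|y|\in[\sqrt{-4-\mu},\sqrt{4-\mu}]$, which by the definitions in~\eqref{e.ys} is exactly $y\in\I_\mu\cup\J_\mu$ (the negative branch gives $\I_\mu=[a_\mu,b_\mu]$, the positive branch gives $\J_\mu=[c_\mu,d_\mu]$). Therefore the set of points of $\Delta$ whose image meets that slab is $\{(x,y,z)\in\Delta: y\in\I_\mu\cup\J_\mu\}=\mathcal{A}_{\xi,\mu}\cup\mathcal{B}_{\xi,\mu}$ by~\eqref{e.AmBm}. Taking images of both sides gives part~(a); here one uses that the slab condition depends only on $y$, so $G_{\xi,\mu}(\Delta)\cap(\mathbb{R}\times[-4,4]\times\mathbb{R})=G_{\xi,\mu}\big(\Delta\cap(\mathbb{R}\times(\I_\mu\cup\J_\mu)\times\mathbb{R})\big)$.

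For part~(b) there are three coordinates to control on $\mathcal{A}_{\xi,\mu}\cup\mathcal{B}_{\xi,\mu}$. The second coordinate $\mu+y^2$ lies in $(-4,4)$: on $\I_\mu\cup\J_\mu$ one has $y^2\in[-4-\mu,4-\mu]$, hence $\mu+y^2\in[-4,4]$, and the strict inclusion $(-4,4)$ for the open parts of the legs follows since by~\eqref{e.valoresy} the relevant $y^2$-values away from the endpoints are interior; in any case $G_{\xi,\mu}(\mathcal{A}\cup\mathcal{B})\subset\mathbb{R}\times[-4,4]\times\mathbb{R}$ suffices for the statement as written, the openness in the first factor being the real content. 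The first coordinate of the image is $y$, and for $(x,y,z)\in\mathcal{A}_{\xi,\mu}\cup\mathcal{B}_{\xi,\mu}$ we have $y\in\I_\mu\cup\J_\mu\subset(-4,0)\cup(0,4)\subset(-4,4)$ by~\eqref{e.valoresy}; this is where the strict inclusion $(-4,4)$ in the first factor comes from. The third coordinate $\xi z+y$ is simply left in $\mathbb{R}$, so no estimate is needed there.

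The only mildly delicate point — and the one I would write out carefully — is the bookkeeping that $\{y^2\in[-4-\mu,4-\mu]\}\cap\{y\in[-4,4]\}$ is the disjoint union $\I_\mu\cup\J_\mu$ and nothing more; this rests on~\eqref{e.estim}, which guarantees $\sqrt{4-\mu}<\sqrt{14}<4$ so that the whole band $|y|\in[\sqrt{-4-\mu},\sqrt{4-\mu}]$ already sits inside $[-4,4]$ and is not truncated by the $x$-range of $\Delta$ — equivalently, that $a_\mu,d_\mu\in(-4,4)$. Everything else is substitution into the formula for $G_{\xi,\mu}$. I do not expect any genuine obstacle; the lemma is the elementary verification underlying condition~(\BHa), and the estimates~\eqref{e.estim}--\eqref{e.valoresy} have been arranged precisely to make it go through.
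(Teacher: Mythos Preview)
Your argument is correct and, at its core, rests on the same observation as the paper's: the second coordinate of $G_{\xi,\mu}(x,y,z)$ is $\mu+y^2$, so membership in the slab $\mathbb{R}\times[-4,4]\times\mathbb{R}$ is equivalent to $y\in\I_\mu\cup\J_\mu$. The difference is organisational. The paper splits the verification into several sub-claims phrased via the projections $\Pi_3$ (onto the $\mathbb{XY}$-plane) and $\Pi_1$ (onto the $\mathbb{YZ}$-plane), separately checking that $G_{\xi,\mu}(\mathcal{A}_{\xi,\mu})$ and $G_{\xi,\mu}(\mathcal{B}_{\xi,\mu})$ are disjoint, that $g_{\xi,\mu}$ maps the complement $\Gamma_\mu$ outside the projected box, and that the $\partial^{\mathrm{uu}}$-boundaries land in $\{|y|=4\}$. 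Your route bypasses all of this by using directly that $G_{\xi,\mu}(\Delta)\cap S=G_{\xi,\mu}(\Delta\cap G_{\xi,\mu}^{-1}(S))$ together with the explicit computation of $G_{\xi,\mu}^{-1}(\mathbb{R}\times[-4,4]\times\mathbb{R})$; this is cleaner and avoids the auxiliary projections. The paper's decomposition does yield the extra fact $G_{\xi,\mu}(\mathcal{A}_{\xi,\mu})\cap G_{\xi,\mu}(\mathcal{B}_{\xi,\mu})=\emptyset$ (Claim~\ref{cl.ed}), which is not part of the lemma as stated but is needed for~(\BHa) to conclude that these are the two connected components; you may want to record that separately, though it follows immediately from $\I_\mu\cap\J_\mu=\emptyset$ and the fact that the first coordinate of the image is $y$.
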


\begin{proof}
We begin showing the equality of the item a).
Keeping in mind Remark~\ref{r.front}, 
the inclusion ``$\subset$''
is obtained from the  relations (see Figure \ref{Fig:BH}):  
 \begin{equation}\label{e.eq1}
 G_{\xi,\mu}(\mathcal{A}_{\xi,\mu}) \cap G_{\xi,\mu}(\mathcal{B}_{\xi,\mu})=\emptyset,\quad
  G_{\xi,\mu}\big(\Delta\setminus
   (\mathcal{A}_{\xi,\mu}\cup
    \mathcal{B}_{\xi,\mu})\big)\cap
     \Delta=\emptyset,\quad (\xi,\mu)\in\mathcal{P}.
 \end{equation}   
The  reciprocal inclusion ``$\supset$'' follows from   the relation: 
\begin{equation}\label{e.eq2}
G_{\xi,\mu}\big(
\partial^{\mathrm{uu}}\mathcal{A}_{\xi,\mu}\cup
  \partial^{\mathrm{uu}}
  \mathcal{B}_{\xi,\mu}\big) 
 \subset
 \{|y|=4\},\quad \quad (\xi,\mu)\in\mathcal{P}.
\end{equation}
To get the first relation in \eqref{e.eq1}, it is sufficient to study the projections of $G_{\xi,\mu}(\mathcal{A}_{\xi,\mu})$ and $G_{\xi,\mu}(\mathcal{B}_{\xi,\mu})$ in the plane $\mathbb{XY}$. We denote such projection by $\Pi_3$.
 
\begin{cl}\label{cl.ed} For every $(\xi,\mu)\in\mathcal{P}$ it holds 
 $\Pi_3 (G_{\xi,\mu}(\mathcal{A}_{\xi,\mu}))\cap \Pi_3 (G_{\xi,\mu}(\mathcal{B}_{\xi,\mu}))=\emptyset$.
 \end{cl}
\begin{proof}
Let $(\xi,\mu)\in\mathcal{P}$, then  we have that
\begin{equation*}
\begin{split}
\Pi_3 (G_{\xi,\mu}(\mathcal{A}_{\xi,\mu}))=\big\{(y,\mu+y^2):y \in \I_{\mu}\big\},\,\,
\Pi_3 (G_{\xi,\mu}(\mathcal{B}_{\xi,\mu}))=\big\{(y,\mu+y^2):y \in \J_{\mu}\big\}.
\end{split}
\end{equation*}
From $\I_{\mu}\cap \J_{\mu}=\emptyset$ it follows that $\Pi_3 (G_{\xi,\mu}(\mathcal{A}_{\xi,\mu}))\cap\Pi_3 (G_{\xi,\mu}(\mathcal{B}_{\xi,\mu}))=\emptyset,$ ending the proof of the claim.
\end{proof}

\begin{remark}\label{r.cl}
{\em{
Equation \eqref{e.valoresy} and the proof of the claim above also imply that 
$$\Pi_3 \big( G_{\xi,\mu}(\mathcal{A}_{\xi,\mu})\cup G_{\xi,\mu}(\mathcal{B}_{\xi,\mu}) \big)\subset (-4,4)\times[-4,4],\quad
\mbox{for every  $(\xi,\mu)\in\mathcal{P}.$}
$$ 
}}
\end{remark}

We now prove \eqref{e.eq2} and the second part of \eqref{e.eq1}. Since the endomorphisms $G_{\xi,\mu}$ collapse the $\mathbb{X}$-direction,  
it is sufficient to study the corresponding projections in the plane $\mathbb{YZ}$. 
For this, consider the sets
\begin{equation*}
\begin{split}
&\Gamma_{\mu}\eqdef
 \Big([-4,a_{\mu})\cup
  (b_{\mu},c_{\mu})
  \cup (d_{\mu},4]\Big)\times[-40,22],
\\
&C^1_\mu\eqdef\{a_{\mu}\}
\times[-40,22],\quad 
C^2_\mu\eqdef \{b_{\mu}\}
\times[-40,22],
\\
&C^3_\mu\eqdef\{c_{\mu}\}
\times[-40,22],
\quad
C^4_{\mu}\eqdef \{d_{\mu}\}
\times[-40,22].
\end{split}
\end{equation*}
Note that $\Gamma_{\mu}, C^1_{\mu},C^2_{\mu}$,  $C^3_{\mu}$, and $C^4_{\mu}$ are, respectively, the projections  on the plane $\mathbb{YZ}$ of the sets
\begin{equation*}
\begin{split}
&\Delta\setminus (\mathcal{A}_{\xi,\mu}\cup \mathcal{B}_{\xi,\mu}),
\\
&\partial^{\mathrm{uu}}\mathcal{A}_{\xi,\mu}
\cap \{y=a_{\mu}\},\quad
\partial^{\mathrm{uu}}
\mathcal{A}_{\xi,\mu}
\cap \{y=b_{\mu}\},
\\
&\partial^{\mathrm{uu}}
\mathcal{B}_{\xi,\mu}\cap 
\{y=c_{\mu}\},\quad
\partial^{\mathrm{uu}}\mathcal{B}_{\xi,\mu}
\cap \{y=d_{\mu}\}.
\end{split}
\end{equation*}

\begin{figure}
\centering
\begin{overpic}[scale=.18,bb=0 0 1928 908,tics=5
  ]{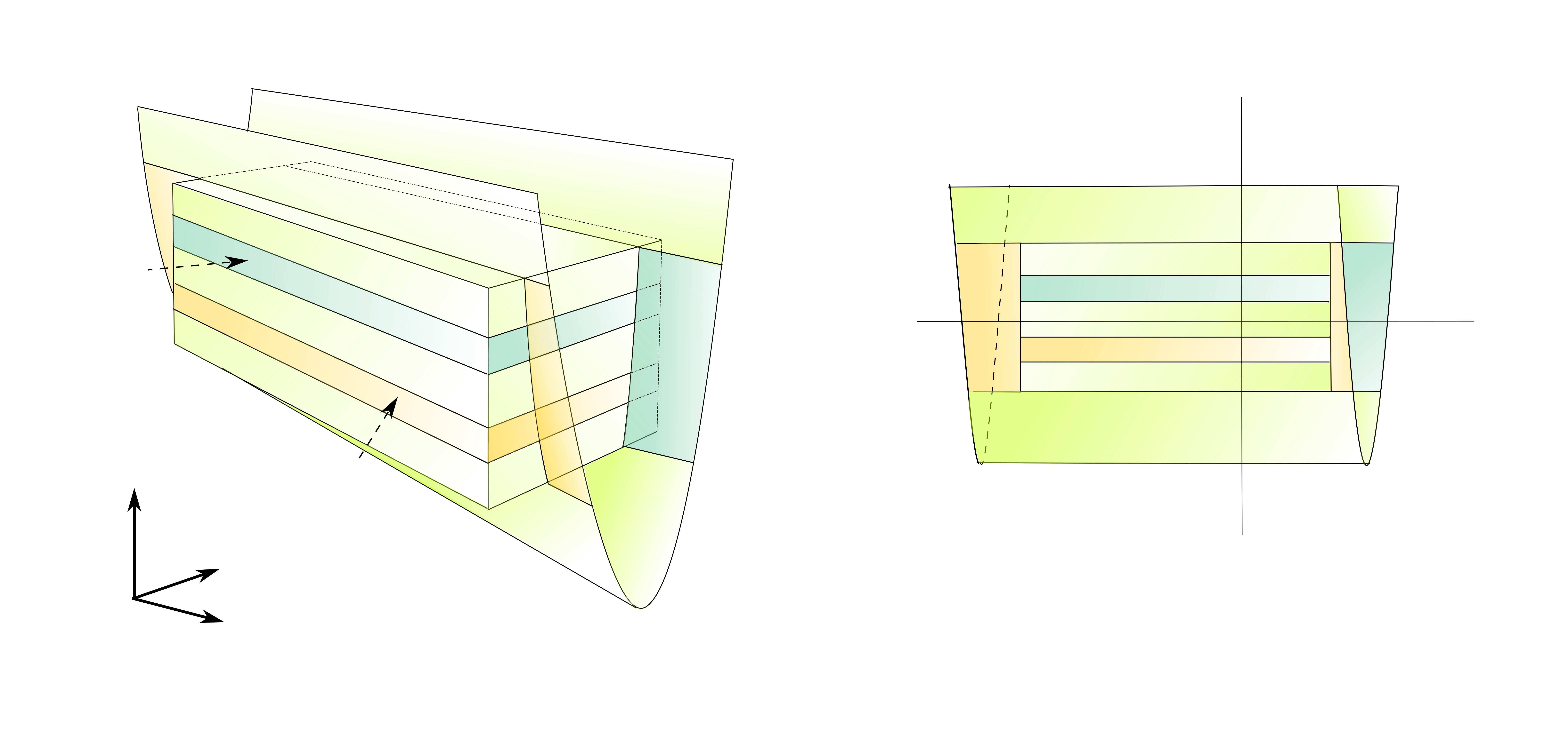}
  \put(40,40){\small{$G_{\xi,\mu}(\Delta)$}} 
\put(37,23){\small{$\Delta$}}
     \put(19,16){\small{ $\mathcal{A}_{\xi,\mu}$}}
      \put(4,29){\small{ $\mathcal{B}_{\xi,\mu}$}}
      \put(9,15){\small{$\mathbb{Y}$}}
        \put(15,10){\small{$\mathbb{X}$}}
        \put(15,6){\small{$\mathbb{Z}$}}
        \put(81,40){\small{$\mathbb{Y}$}}
        \put(92,23){\small{$\mathbb{Z}$}}
         \end{overpic}
\caption{The blender-horseshoe of $G_{\xi,\mu}$ 
}
\label{Fig:BH}
\end{figure}
 
 Recall the definition of the endomorphism $g_{\xi,\mu}$ in \eqref{e.g}.
 
\begin{cl}For every $(\xi,\mu)\in\mathcal{P}$ it holds that
\begin{itemize}
\item[a')]
$g_{\xi,\mu}(\Gamma_\mu)\cap 
\big([-4,4]\times[-40,22]\big)=\emptyset$,
\item[b')]
$g_{\xi,\mu}(C^1_\mu \cup C^4_\mu)\subset \{y=4\}$, and 
\item[c')]
$g_{\xi,\mu}(C^2_\mu\cup C^3_\mu)\subset \{y=-4\}$. 
\end{itemize}
\end{cl}
\begin{proof}
Consider the projection
$\Pi_{13}(x, y, z) \eqdef y$.
It is easy to check the following equalities:
\begin{equation*}
\begin{split}
\Pi_{13}\Big(g_{\xi,\mu}\big([-4,a_{\mu})
\times[-40,22]\big)\Big)&=(4,\mu+16],
\\
\Pi_{13}\Big(g_{\xi,\mu}
\big((b_{\mu},c_{\mu})\times[-40,22]\big)\Big)&=[\mu,-4),
\\
\Pi_{13}\Big(g_{\xi,\mu}\big((d_{\mu},4]\times[-40,22]\big)\Big)&=(4,\mu+16].
\end{split}
\end{equation*}
Recalling that $\mu \in (-10,-9)$ we get item a'). 
From Remark~\ref{r.Invariant directions} and equation \eqref{e.ys} it follows
\begin{itemize}
\item $g_{\xi,\mu}$ preserves the foliation $\mathcal{F}=\{\{y\}\times\mathbb{R}:y\in \mathbb{R}\}$, and
\item 
$
\mu+a_{\mu}^2=
\mu+d_{\mu}^2=-(\mu+b_{\mu}^2)
=-(\mu+c_{\mu}^2)
=4. 
$
\end{itemize}
These two facts imply items b') and c').
This ends the proof of the claim.
 \end{proof}
 
 The proof of item a) of the lemma is now complete.
 Finally, item b) follows directly from Remark \ref{r.cl}. 
%
%
%
%
%
The proof of the lemma is now complete.
 \end{proof}

\subsection{Contracting/expanding  invariant cone fields}In this section, we study the condition (\BHb) of a blender-horseshoe
involving invariance, contraction, and expansion
of the cone fields in \eqref{e.conefilds}. This condition is a consequence of the following lemma.

\begin{lemma}
\label{l.condition3}
Let $\vartheta>0$ and $\theta=1/2$. Then,
for every $(\xi,\mu)\in \mathcal{P}$
 and every $p\in\mathcal{A}_{\xi,\mu}\cup\mathcal{B}_{\xi,\mu} $ the following holds:
\begin{itemize}
\item[(i)]
 $\mathcal{C}^{\mathrm s}_{\vartheta}\big(G_{\xi,\mu}(p)\big)\subset D(G_{\xi,\mu})_p\big(\mathcal{C}^\mathrm s_{\vartheta}(p)\big)$,
\item[(ii)]
 $D(G_{\xi,\mu})_p\big(\mathcal{C}^\mathrm{u}_{\theta}(p)\big)\subset \mathcal{C}^\mathrm{u}_\theta\big(G_{\xi,\mu}(p)\big)$,
\item[(iii)]
$D(G_{\xi,\mu})_p\big(\mathcal{C}^\mathrm{uu}_\theta(p)\big)\subset \mathcal{C}^\mathrm{uu}_\theta\big(G_{\xi,\mu}(p)\big)$,
\item[(iv)]
 $DF|_{\mathcal{C}^{\mathrm u}_\theta}$  is uniformly expanding and $DF|_{\mathcal{C}^{\mathrm s}_{\vartheta}}$ is  
uniformly contracting for every 
$\vartheta$ sufficiently small. 
\end{itemize}
\end{lemma}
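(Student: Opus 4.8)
The key is that the endomorphism $G_{\xi,\mu}$ has a very simple derivative, so the cone computations reduce to linear algebra with one parameter-dependent quantity, the size $2y$ of the expansion in the $\mathbb{Y}$-direction. First I would compute
\[
D(G_{\xi,\mu})_{(x,y,z)}=
\begin{pmatrix}
0 & 1 & 0\\
0 & 2y & 0\\
0 & 1 & \xi
\end{pmatrix},
\]
so that a vector $(u,v,w)$ is sent to $(v,\,2yv,\,v+\xi w)$. Note that the image never depends on $u$: the $\mathbb{X}$-direction is collapsed, which is exactly why (i) is an \emph{inclusion} of the target $\mathrm{s}$-cone into the image of the source $\mathrm{s}$-cone rather than the reverse (for an endomorphism $DF$ need not be invertible, and here it is not). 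The only quantitative input I need is a lower bound on $|2y|$ for $p=(x,y,z)\in\mathcal{A}_{\xi,\mu}\cup\mathcal{B}_{\xi,\mu}$: by \eqref{e.estim} one has $y\in \I_\mu\cup\J_\mu$, hence $|y|>\sqrt5$, so $|2y|>2\sqrt5>4$. Also $\xi\in(1.18,1.19)$, so $1<\xi<1.2$. These two facts, $|2y|>4$ and $\xi<1.2$, will drive everything.

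For (ii) and (iii): take $(u,v,w)\in\mathcal{C}^{\mathrm{uu}}_\theta(p)$, i.e.\ $\sqrt{u^2+w^2}<\theta|v|=\tfrac12|v|$; the image is $(v',v'',w')=(v,2yv,v+\xi w)$, and I must show $\sqrt{(v')^2+(w')^2}<\tfrac12|v''|$, i.e.\ $v^2+(v+\xi w)^2<\tfrac14(2y)^2v^2=y^2v^2$. Since $|w|<\tfrac12|v|$ and $\xi<1.2$ we get $|v+\xi w|\le|v|+1.2\cdot\tfrac12|v|=1.6|v|$, so the left side is at most $v^2+2.56v^2=3.56v^2$, while the right side is $y^2v^2>5v^2$; done with room to spare. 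The same computation (with the weaker hypothesis $|u|<\theta\sqrt{v^2+w^2}$, which only makes $|u|$ smaller relative to $v$) handles the $\mathrm{u}$-cone in (ii), since the image is identical in the three relevant coordinates and one again compares $\sqrt{(v')^2+(w')^2}$ against $\tfrac12|v''|$. For (i), I would argue dually: given a target $\mathrm{s}$-cone vector $(u_0,v_0,w_0)$ with $\sqrt{v_0^2+w_0^2}<\vartheta|u_0|$, exhibit a preimage in $\mathcal{C}^{\mathrm s}_\vartheta(p)$ — solve $v=v_0$, $v+\xi w=w_0$ (so $w=(w_0-v_0)/\xi$) and $u$ arbitrary; choosing $|u|$ large makes $\sqrt{v^2+w^2}<\vartheta|u|$ hold, so the preimage lies in the source $\mathrm{s}$-cone, giving the claimed inclusion.

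For (iv), the contraction on $\mathcal{C}^{\mathrm s}_\vartheta$ is immediate and in fact total in the limit: for $(u,v,w)$ with $\sqrt{v^2+w^2}<\vartheta|u|$ the image has norm $\sqrt{v^2+(2yv)^2+(v+\xi w)^2}\le \text{const}\cdot\sqrt{v^2+w^2}<\text{const}\cdot\vartheta|u|$, which is $<\tfrac12|u|\le\tfrac12|(u,v,w)|$ once $\vartheta$ is small enough (this is the ``$\lambda^{\mathrm s}=0$'' phenomenon from Lemma~\ref{l.Hyperbolic-fixed-points}). For the expansion on $\mathcal{C}^{\mathrm u}_\theta$: if $|u|<\tfrac12\sqrt{v^2+w^2}$ then the image $(v,2yv,v+\xi w)$ has norm at least $|2yv|>4|v|$, and I need to compare this with $|(u,v,w)|\le \sqrt{\tfrac14(v^2+w^2)+v^2+w^2}=\tfrac{\sqrt5}{2}\sqrt{v^2+w^2}$. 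If $|v|\ge\tfrac12\sqrt{v^2+w^2}$ this already gives expansion by a factor $>4\cdot\tfrac12/(\tfrac{\sqrt5}{2})=4/\sqrt5>1$; the only subtlety — and the one genuinely delicate point — is the regime where $|v|$ is small compared to $|w|$ (a vector nearly aligned with the center-unstable direction $\mathbb{Z}$), where the $2yv$ term is weak. There the relevant growth comes from the $w$-component: $|v+\xi w|\ge \xi|w|-|v|$, and since $\xi>1.18$ while $|v|$ is small one still beats $1$; concretely one optimizes the worst ratio over the cone and checks it exceeds $1$ using $\xi>1.18$ and $\theta=1/2$. This step — verifying uniform expansion in the \emph{weakly expanded} center-unstable regime, which is where the ``center-unstable'' name and the precise lower bound $\xi>1.18$ actually matter — is the main obstacle; everything else is a one-line cone inclusion. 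Passing from $\ell=1$ pointwise estimates to the $\ell\in\mathbb{N}$, $c>1$ form of (\BHb) is then automatic (take $\ell=1$, and any $c$ below the uniform expansion/contraction constants just found).
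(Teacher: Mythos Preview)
Your argument for (i) has a genuine gap. You propose: given a target vector $(u_0,v_0,w_0)\in\mathcal{C}^{\mathrm s}_\vartheta(G_{\xi,\mu}(p))$, solve for a preimage in $\mathcal{C}^{\mathrm s}_\vartheta(p)$. But $D(G_{\xi,\mu})_p$ has rank $2$ (its kernel is the $\mathbb{X}$-axis), so its image is the $2$-plane $\{(v,2yv,v+\xi w):v,w\in\mathbb{R}\}$, and a generic target vector --- for instance $(1,0,0)\in\mathcal{C}^{\mathrm s}_\vartheta(G_{\xi,\mu}(p))$ --- has \emph{no} preimage at all. Your system ``$v=v_0$, $v+\xi w=w_0$'' also mismatches the components: the image is $(v,2yv,v+\xi w)$, so matching it to $(u_0,v_0,w_0)$ forces $u_0=v$ and $v_0=2yv$, i.e.\ the constraint $v_0=2y\,u_0$ on the target. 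Thus the literal inclusion in (i) cannot hold for this endomorphism; what (\BHb) actually requires, and what the paper proves, is the backward-invariance form: if $\mathbf{v}\in\partial\mathcal{C}^{\mathrm s}_\vartheta(p)\setminus\{0\}$ then $D(G_{\xi,\mu})_p(\mathbf{v})\notin\overline{\mathcal{C}^{\mathrm s}_\vartheta(G_{\xi,\mu}(p))}$. That is a one-line check: if $\vartheta|u|=\sqrt{v^2+w^2}$ then $\vartheta\sqrt{v_1^2+w_1^2}\ge\vartheta|v_1|=2|y|\vartheta|v|>2\sqrt5\,|v|=2\sqrt5\,|u_1|$.

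Your description of (ii) is also slightly off: you say one ``again compares $\sqrt{(v')^2+(w')^2}$ against $\tfrac12|v''|$'', but that is the $\mathrm{uu}$-cone check. For the $\mathrm{u}$-cone one must show $|u_1|<\tfrac12\sqrt{v_1^2+w_1^2}$; this is immediate from $|u_1|=|v|$ and $\sqrt{v_1^2+w_1^2}\ge|v_1|=2|y|\,|v|>2\sqrt5\,|v|$ (and in fact holds for \emph{every} vector, not just those in the source $\mathrm{u}$-cone).

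Your arguments for (iii) and for the contraction half of (iv) are correct and essentially the paper's. For the expansion half of (iv) your diagnosis of the obstacle --- the regime $|v|\ll|w|$, where the bound must come from $\xi>1.18$ rather than from $|2y|$ --- is right; the paper carries this out by passing to the norm $|(u,v,w)|_*=\max\{|u|,\sqrt{v^2+w^2}\}$ and splitting at $6.5|v|=|w|$, but your outline would go through once the worst-case ratio is written down.
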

%

\begin{proof}
Consider  $p=(x,y,z)\in \mathcal{A}_{\xi,\mu}\cup\mathcal{B}_{\xi,\mu} $ and $\textbf{v} =
(u, v, w)\in T_{p}\Delta$, write
$$
(u_1, v_1, w_1)\eqdef D(G_{\xi,\mu})_p(\textbf{v}) = (v, 2\,y\,v, v + \xi\,w).
$$
Recalling ~\eqref{e.AmBm} and \eqref{e.estim}, we have that if $(x,y,z)\in \mathcal{A}_{\xi,\mu}\cup\mathcal{B}_{\xi,\mu}$ then 
$y\in \I_{\mu}\cup\J_{\mu}$ and thus  
$|y|>\sqrt{5}$, for every $\mu\in(-10,-9)$.

The  items  of the lemma are proved in the following claims.

\begin{cl}[Item (i)]
\label{cl.(i)} Let $\vartheta>0$. For every $\mathbf{v}\in\partial \mathcal{C}^{\mathrm s}_{\vartheta}(p)\setminus \{ \bar 0\}$ we have
$ D(G_{\xi,\mu})_p(\mathbf{v})\in
\big(\overline{\mathcal{C}^{\mathrm s}_{\vartheta}(G_{\xi,\mu}(p)\big)}\big)^c.
$
\end{cl}
\begin{proof}
If $\mathbf{v}\in\partial \mathcal{C}^{\mathrm s}_{\vartheta} (p)\setminus \{ \bar 0\}$ then $\vartheta\, (\sqrt{v^2+w^2})=|u|$. Since
$|y|>\sqrt{5}$, we get that
$$
\vartheta\, \big( \sqrt{v_1^2+w_1^2} \big)\ge \vartheta \, |v_1|>2 \, |y|\, |v|>
2\,\sqrt{5}\,|v|= 2\,\sqrt{5}\,|u_1|>
|u_1|.
$$
Therefore $ D(G_{\xi,\mu})_p(\mathbf{v})\notin
\overline{\mathcal{C}^{\mathrm s}_{\vartheta}(G_{\xi,\mu}(p)\big)}
$,
proving the claim.
\end{proof}

\begin{cl}[Item (ii)]
\label{cl.(ii)}
For every 
$\mathbf{v}\in \mathcal{C}_{1/2}^{\mathrm {u}}(p)$ it holds 
$D(G_{\xi,\mu})_p ({\mathbf{v}})\in \mathcal{C}_{1/2}^{\mathrm {u}}\big(G_{\xi,\mu}(p)\big)$.
\end{cl}
\begin{proof}
Since 
$|y|>\sqrt{5}$, we have that
$$
\sqrt{v_1^2+w_1^2}\ge |v_1|=2\, |y|\, |v|>
2\, \sqrt{5}\, |v|>
2\, |u_1|,
$$
proving the claim.
\end{proof}

\begin{cl}[Item (iii)]
For every 
$\mathbf{v}\in \mathcal{C}_{1/2}^{\mathrm {uu}}(p)$ it holds 
$D(G_{\xi,\mu})_p ({\mathbf{v}})\in \mathcal{C}_{1/2}^{\mathrm {uu}}\big(G_{\xi,\mu}(p)\big)$.
\end{cl}
\begin{proof}
We need to check that 
$$
\sqrt{u^2+w^2}<\frac1{2}\,|v|\quad\Rightarrow\quad
\sqrt{u_1^2+w_1^2}<\frac1{2}\, |v_1|.
$$
Note that $\sqrt{u^2+w^2}<\frac1{2}\, |v|$ implies that $|w|<\frac1{2}\,|v|$, and hence
$$
u_1^2+w_1^2
=v^2+(v+\xi\,w)^2\le 
2\,v^2+ 2\,\xi\,|v|\, |w|+\xi^2\,|w|^2
\le
\left(2+\xi+\left(\frac{\xi}{2}\right)^2\right)\,v^2.
$$
Now $\xi\in(1.18,1.19)$ implies that
$$
\left(2+\frac{\xi}{2}+\left(\frac{\xi}{2}\right)^2\right)< 4
$$
and hence
$$
u_1^2+w_1^2< 4 v^2.
$$
Thus, since 
 $p=(x,y,z)\in \mathcal{A}_{\xi,\mu}\cup\mathcal{B}_{\xi,\mu}$ implies that $|y|>\sqrt{5}$, it follows
 $$
2\,\sqrt{u_1^2+w_1^2}<4\, |v|<2\, |y|\,|v|=|v_1|,
$$
proving the claim.
\end{proof}
\begin{cl}[Item (iv)]
 $DG_{\xi,\mu}|_{\mathcal{C}_{1/2}^{\mathrm u}}$ is uniformly expanding and, if
$\vartheta$ is small enough, 
 $DG_{\xi,\mu}|_{\mathcal{C}_\vartheta^{\mathrm s}}$ is 
 uniformly contracting.
\end{cl}
\begin{proof} 
The uniform contraction of the
cone field
$\mathcal{C}_\vartheta^{\mathrm s}$ for small $\vartheta$ 
 follows from the fact that $D(G_{\xi,\mu})_p$ 
  is an endomorphism 
  whose eigenspace
associated the eigenvalue $0$ is spanned by $(1, 0, 0)$.

To see that $D(G_{\xi,\mu})$ uniformly expands the vectors in ${\mathcal{C}_{1/2}^{\mathrm u}}$
consider the norm
$$
|(u, v, w)|_* \eqdef \max\left\{ |u|,
\sqrt{
v^2 + w^2}
\right\}.
$$ 
Take $\textbf{v}=(u, v, w)\in\mathcal{C}^{\mathrm u}_{1/2}(p)$ and write
$D(G_{\xi,\mu})_p(\textbf{v})=(u_1, v_1, w_1)=(v,2\,y\,v, v+\xi \, w)$.
We claim that 
if 
$\textbf{v}\in \mathcal{C}^{\mathrm u}_{1/2}(p)$ then
$|(D(G_{\xi,\mu})_p\textbf{v}|_* > |\textbf{v}|_*$. 
By compactness,
this implies that 
$|(D(G_{\xi,\mu})_p\textbf{v}|_* > c_0\, |\textbf{v}|_*$,
for some uniform $c_0> 1$.  Note that the Euclidean norm $||\cdot||$ and  $|\cdot|_\ast$ are equivalent, hence
there is $\kappa>1$ such that $\kappa^{-1} || \mathbf{v}||  \le |\mathbf{v}|_\ast \le \kappa || \mathbf{v}||$.
The number $\ell$ in (\BHb) is the first $\ell_0$ with $c_0^{\ell_0}> \kappa$. 


We now prove that
$|(D(G_{\xi,\mu})_p\textbf{v}|_* > |\textbf{v}|_*$. 
 Note that for $\mathbf{v}=(u,v,w)\in \mathcal{C}^{\mathrm u}_{1/2}(p)$ we have $|\textbf{v}|_*=\sqrt{v^2+w^2}$ and 
\begin{equation}\label{e.cris}
v_1^2+w_1^2
= 4\, v^2\, y^2 +
(v + \xi\,w)^2\ge
4\,v^2\,y^2
+
v^2 -
 2\,\xi\,|v|\,|w| +
\xi^2\, w^2.
\end{equation}
We divide the proof into two cases: $(6.5)\, |v| \ge  |w|$ and $(6.5)\,|v| \le  |w|$.
If
$(6.5)\,|v| \ge  |w|$, using that  $\xi\in (1.18,1.19)$ and
$|y|>\sqrt{5}$, we get that
\begin{equation}\label{e.crisb}
4\, v^2\, y^2
-
 2\, \xi\,|v|\,|w|\ge 
 (20
-
 13\, \xi)\,v^2>4\,v^2\ge 0.
\end{equation}
Equations~\eqref{e.cris} and \eqref{e.crisb} immediately imply that
$$
v_1^2+w_1^2> 5\, v^2+\xi^2\, w^2> v^2+ w^2.
$$
Hence, $|(D(G_{\xi,\mu})_p\textbf{v}|_* > |\textbf{v}|_*$, proving the first case.
Similarly, if $(6.5)\, |v| \le |w|$ then
$$
v_1^2+w_1^2 \ge
4\,y^2\,v^2+ 
\xi^2\,w^2-2\,\xi\,|v|\,|w|+ v^2  
>
4y^2v^2+ 
\xi^2\,w^2-2\,\xi\,(6.5)^{-1}\,w^2+ v^2.
$$
Condition $\xi\in (1.18,1.19)$ implies that
$$
\xi^2-2\,\xi\,(6.5)^{-1}>1.
$$
Thus
$$
v_1^2+w_1^2 \ge v^2+w^2.
$$
Hence, $|(D(G_{\xi,\mu})_p\textbf{v}|_* > |\textbf{v}|_*$.
 This ends the proof of the claim.
\end{proof}
The proof of the lemma is now complete.
\end{proof}

\begin{remark}\label{r.reversing}
{\em{
For each $p=(x,y,z)\in \mathbb{R}^3$ we identify $T_p\mathbb{R}^3$ with  $\mathbb{R}^3$ and 
consider the canonical basis $\{\textbf{i}, \textbf{j}, \textbf{k}\}$.
Note  that $D(G_{\xi,\mu})_p(\textbf{i})=\textbf{0}$, $D(G_{\xi,\mu})_p(\textbf{j})=\textbf{i}+2\,y\,\textbf{j}_2+\textbf{k}$, and  
$D(G_{\xi,\mu})_p(\textbf{k})=\xi\,\textbf{k}$. In particular, 
 $\langle D(G_{\xi,\mu})_p(\textbf{j}), \textbf{j}\rangle<0$ (resp. $>0$) if $y<0$ 
 (resp. $y>0$).
As a consequence, for every $\theta>0$ and every $p\in\mathcal{A}_{\xi,\mu}$, the derivative  $D(G_{\xi,\mu})_p$ sends the semi-positive  cone $\mathcal{C}^{\mathrm{uu}}_\theta(p)\cap\{y>0\}$ (resp. semi-negative cone)
into the semi-space $\{y<0\}$ (resp.  $\{y>0\}$). When $p\in\mathcal{B}_{\xi,\mu}$
the derivative  $D(G_{\xi,\mu})_p$ maps the semi-positive cone $\mathcal{C}^{\mathrm{uu}}_\theta(p)\cap\{y>0\}$ (resp. semi-negative cone) into $\{y>0\}$ (resp. $\{y>0\}$).
}}
\end{remark}

\subsection{The Markov partition}\label{ss.mp}
To define the Markov partition in Condition (\BHc) we need some preliminary
 constructions. 

For $(\xi,\mu)\in\mathcal{P}$ consider the auxiliary straight lines $R^1_{\xi,\mu},R^2_{\xi,\mu}$ in the plane $\mathbb{YZ}$ defined by the equations and depicted in Figure~\ref{fig:Markov},
\begin{equation*}
\begin{split}
&R^1_{\xi,\mu} \eqdef \big\{ \big(y,z^1_{\xi}(y)\big): z^1_{\xi}(y)=\xi^{-1}\,(22-y), \, y\in \mathbb{R}\big\},\\
&R^2_{\xi,\mu} \eqdef \big\{
\big(y,z^2_{\xi}(y)\big)
: z^2_{\xi}(y)=\xi^{-1}\,(-40-y),  \, y\in \mathbb{R}\big\}.
\end{split}
\end{equation*} 
 Recall the definition of the intervals $\I_{\mu}=[a_{\mu},b_{\mu}]$ and $\J_{\mu}=[c_{\mu},d_{\mu}]$ in \eqref{e.valoresy}.  
 Consider the auxiliary parallelogram $\mathtt{A}_{\xi,\mu}$ in the plane  $\mathbb{Y}\mathbb{Z}$
whose boundary consists of the following  segments (see Figure~\ref{fig:Markov}):
\[
\begin{split}
&\L^1_{\xi,\mu}\eqdef \big\{\big(y,z^1_{\xi}(y)\big):
 y\in \I_{\mu}\big\},\quad 
 \L^2_{\xi,\mu}\eqdef \{a_{\mu}\}
\times\big[z^2_{\xi}(a_{\mu}),z^1_{\xi}(a_{\mu})\big],\\
& 
\L^3_{\xi,\mu}\eqdef \big\{\big(y,z^2_{\xi}(y)\big):
 y\in \I_{\mu} \big\},
\quad \L^4_{\xi,\mu}\eqdef \{b_{\mu}\}
\times\big[z^2_{\xi}(b_{\mu}),z^1_{\xi}(b_{\mu})\big].
\end{split}
\]
Analogously, consider
the parallelogram  $\mathtt{B}_{\xi,\mu}$
in the plane $\mathbb{Y}\mathbb{Z}$
bounded by
\[
\begin{split}
&\tilde \L^1_{\xi,\mu}\eqdef \big\{\big(y,z^1_{\xi}(y)\big):
 y\in \J_{\mu}\big\},\quad 
 \tilde \L^2_{\xi,\mu}\eqdef \{c_{\mu}\}
\times\big[z^2_{\xi}(c_{\mu}),z^1_{\xi}(c_{\mu})\big],\\
& 
\tilde \L^3_{\xi,\mu}\eqdef \big\{\big(y,z^2_{\xi}(y)\big):
 y\in \J_{\mu} \big\},
\quad \tilde \L^4_{\xi,\mu}\eqdef \{d_{\mu}\}
\times\big[z^2_{\xi}(d_{\mu}),z^1_{\xi}(d_{\mu})\big].
\end{split}
\]
\begin{figure}
\centering
\begin{overpic}[scale=.25,bb=0 1 1389 653,tics=5
  ]{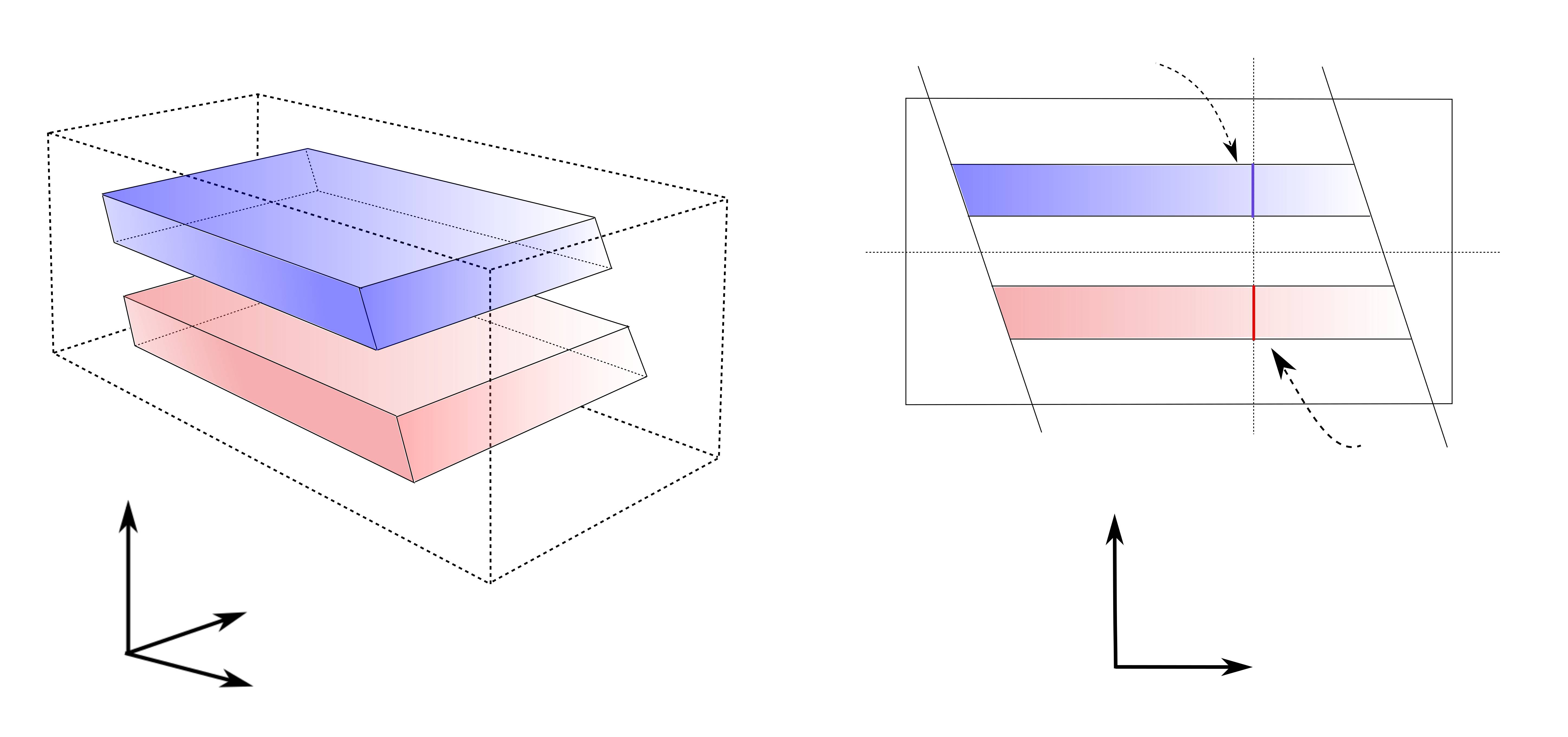}
     \put(41,22){{ $\mathbb{A}_{\xi,\mu}$}}
       \put(39,30){{ $\mathbb{B}_{\xi,\mu}$}}
     \put(68,26.5){{$\mathtt{A}_{\xi,\mu}$}}
       \put(65,34.3){{$\mathtt{B}_{\xi,\mu}$}}
                 \put(17,1.7){\small{$ \mathbb{Z}$}}
                  \put(9,13){\small{$ \mathbb{Y}$}}
                     \put(16.5,7.4){\small{$ \mathbb{X}$}}
                  \put(85,43.4){\small{$R^1_{\xi,\mu}$}}
                     \put(59,43.5){\small{$R^2_{\xi,\mu}$}}
                        \put(87,17.5){\small{$\I_{\mu}$}}
                          \put(71,42.5){\small{$\J_{\mu}$}}
                                 \put(77.5,5.8){\small{$ \mathbb{Z}$}}
                                      \put(72,12){\small{$ \mathbb{Y}$}}
 \end{overpic}
\caption{The Markov partition of the blender-horseshoe.}
\label{fig:Markov}
\end{figure}
\begin{remark}
\label{r.erre}
{\em{
Since $(\xi,\mu) \in \mathcal{P}$, it follows  that $\mathtt{A}_{\xi,\mu}$ and $\mathtt{B}_{\xi,\mu}$ are contained in
$ (-4,0)\times (-40,22)$.
By the definitions of $\mathtt{A}_{\xi,\mu}$ and $\mathtt{B}_{\xi,\mu}$, it holds that 
$$g_{\xi,\mu} (\partial \mathtt{A}_{\xi,\mu})=
g_{\xi,\mu} \big(\cup^4_{i=1} \L^i_{\xi,\mu}\big)
=
\partial \big([-4,4]\times[-40,22]\big),$$
$$
g_{\xi,\mu} (\partial \mathtt{B}_{\xi,\mu})=
g_{\xi,\mu} \big(\cup^4_{i=1} \tilde{\L}^i_{\xi,\mu}\big)
=
\partial \big([-4,4]\times[-40,22]\big),
$$
and thus 
$$
g_{\xi,\mu}(\mathtt{A}_{\xi,\mu})=g_{\xi,\mu}(\mathtt{B}_{\xi,\mu})=[-4,4]\times[-40,22].
$$
}}
\end{remark}

We now show that the sets $\mathbb{A}_{\xi,\mu}$ and $\mathbb{B}_{\xi,\mu}$ (see Figure \ref{fig:Markov})
\begin{equation*}
\begin{split}
\mathbb{A}_{\xi,\mu}\eqdef [-4,4]\times\mathtt{A}_{\xi,\mu}\quad
\mbox{and}\quad
\mathbb{B}_{\xi,\mu}\eqdef [-4,4]\times\mathtt{B}_{\xi,\mu}.
\end{split}
\end{equation*}
 form a Markov partition of the blender-horseshoe of $G_{\xi,\mu}$ in $\Delta$. 
 Observe first that 
\begin{equation*}
\mathbb{A}_{\xi,\mu}=(G_{\xi,\mu}|_{\Delta})^{-1}\big(G_{\xi,\mu}(\mathcal{A}_{\xi,\mu})\cap\Delta)\big),\quad
\mathbb{B}_{\xi,\mu}=(G_{\xi,\mu}|_{\Delta})^{-1}\big(G_{\xi,\mu}(\mathcal{B}_{\xi,\mu})\cap\Delta)\big).
\end{equation*}
The next lemma completes the proof of condition  (\BHc).

\begin{lemma}
\label{l.L2}
 For
every $(\xi,\mu)\in\mathcal{P}$ the following holds
\begin{itemize}
\item[a)]
$\mathbb{A}_{\xi,\mu}\cup\mathbb{B}_{\xi,\mu}\subset [-4,4]\times(-4,4)\times(-40,22)$,
\item[b)]
$G_{\xi,\mu}(\mathbb{A}_{\xi,\mu})\cup G_{\xi,\mu}(\mathbb{B}_{\xi,\mu})\subset (-4,4)\times[-4,4]\times \mathbb{R}$.
\end{itemize}

\end{lemma}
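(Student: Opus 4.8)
The plan is to verify the two inclusions by direct coordinate computation, exploiting the explicit product structure of the sets $\mathbb{A}_{\xi,\mu}=[-4,4]\times\mathtt{A}_{\xi,\mu}$ and $\mathbb{B}_{\xi,\mu}=[-4,4]\times\mathtt{B}_{\xi,\mu}$ and the fact that $G_{\xi,\mu}$ collapses the $\mathbb{X}$-direction, so that everything reduces to studying the planar endomorphism $g_{\xi,\mu}(y,z)=(\mu+y^2,\xi z+y)$ on the parallelograms $\mathtt{A}_{\xi,\mu}$ and $\mathtt{B}_{\xi,\mu}$. For item a), since the first coordinate of points in $\mathbb{A}_{\xi,\mu}\cup\mathbb{B}_{\xi,\mu}$ ranges over $[-4,4]$ by construction, it suffices to show that the $\mathbb{YZ}$-projections $\mathtt{A}_{\xi,\mu}$ and $\mathtt{B}_{\xi,\mu}$ are contained in $(-4,4)\times(-40,22)$; the $z$-coordinate containment is immediate because the defining segments use $z^1_\xi(y)$ and $z^2_\xi(y)$ which, for $y\in\I_\mu\cup\J_\mu$, are bounded strictly inside $(-40,22)$ (one checks $z^1_\xi(y)<22$ and $z^2_\xi(y)>-40$ using $\xi>1$ and the estimates $\I_\mu,\J_\mu\subset(-4,4)$ from \eqref{e.valoresy}), while the $y$-coordinate containment is exactly the statement $\I_\mu,\J_\mu\subset(-4,4)$ already recorded in Remark~\ref{r.erre}.

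For item b), I would use the identity $\mathbb{A}_{\xi,\mu}=(G_{\xi,\mu}|_\Delta)^{-1}\big(G_{\xi,\mu}(\mathcal{A}_{\xi,\mu})\cap\Delta\big)$ (and the analogue for $\mathbb{B}$) noted just before the lemma. Applying $G_{\xi,\mu}$ to both sides gives $G_{\xi,\mu}(\mathbb{A}_{\xi,\mu})=G_{\xi,\mu}(\mathcal{A}_{\xi,\mu})\cap\Delta\cap G_{\xi,\mu}(\Delta)$, which in particular is contained in $G_{\xi,\mu}(\mathcal{A}_{\xi,\mu})$. Now Lemma~\ref{l.L1}(b) already tells us $G_{\xi,\mu}(\mathcal{A}_{\xi,\mu})\cup G_{\xi,\mu}(\mathcal{B}_{\xi,\mu})\subset(-4,4)\times[-4,4]\times\mathbb{R}$, so the desired inclusion follows immediately. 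Alternatively, one can argue directly: a point of $\mathbb{A}_{\xi,\mu}$ has the form $(x,y,z)$ with $(y,z)\in\mathtt{A}_{\xi,\mu}$, hence $y\in\I_\mu$, and its image is $(y,\mu+y^2,\xi z+y)$; the first coordinate $y$ lies in $\I_\mu\subset(-4,4)$ by \eqref{e.valoresy}, the second coordinate $\mu+y^2$ lies in $[-4,4]$ since by the choice of $\I_\mu$ we have $\mu+y^2\in[\mu+b_\mu^2,\mu+a_\mu^2]=[-4,4]$ by the identity in the last claim of the previous subsection, and the third coordinate is unconstrained as claimed. The same computation with $\J_\mu$ handles $\mathbb{B}_{\xi,\mu}$.

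The main (though still routine) point to be careful about is the strictness of the inclusions and the correct handling of which boundary segments of the parallelograms map to which faces of the box: one must make sure that the endpoints $y=a_\mu,b_\mu$ (resp. $c_\mu,d_\mu$) are the ones producing $\mu+y^2=4$ (resp. $=-4$), matching the computation $\mu+a_\mu^2=\mu+d_\mu^2=-(\mu+b_\mu^2)=-(\mu+c_\mu^2)=4$ from \eqref{e.ys}, so that the image second coordinate stays in the closed interval $[-4,4]$ while the first coordinate stays in the open interval $(-4,4)$ — the asymmetry (open vs.\ closed) is exactly what condition (\BHc) requires and is the only place where one has to be attentive rather than mechanical. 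Since $\mu\in(-10,-9)$ and $\xi\in(1.18,1.19)$ give ample room in all the estimates, no delicate numerics are involved; the proof is essentially a bookkeeping exercise combining \eqref{e.valoresy}, Remark~\ref{r.erre}, and Lemma~\ref{l.L1}.
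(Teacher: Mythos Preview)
Your proposal is correct and follows essentially the same route as the paper: item a) is deduced from the containment $\mathtt{A}_{\xi,\mu},\mathtt{B}_{\xi,\mu}\subset(-4,4)\times(-40,22)$ recorded in Remark~\ref{r.erre}, and item b) follows from the inclusion $\mathbb{A}_{\xi,\mu}\subset\mathcal{A}_{\xi,\mu}$, $\mathbb{B}_{\xi,\mu}\subset\mathcal{B}_{\xi,\mu}$ together with Lemma~\ref{l.L1}(b). The paper's proof is simply these two citations; your version adds the explicit verification of the $z$-bounds and an alternative direct computation for b), which is fine but not needed.
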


\begin{proof}
Item a) follows from Remark~\ref{r.erre}. For item b),  note that Lemma~\ref{l.L1} implies that
\begin{equation*}
G_{\xi,\mu}(\mathbb{A}_{\xi,\mu})\cup G_{\xi,\mu}(\mathbb{B}_{\xi,\mu})
\subset 
G_{\xi,\mu}(\mathcal{A}_{\xi,\mu})\cup G_{\xi,\mu}(\mathcal{B}_{\xi,\mu})
\subset (-4,4)\times[-4,4]\times \mathbb{R},
\end{equation*}
 completing of proof of lemma.
\end{proof}
\subsection{uu-discs through the local stable manifolds}
\label{ss.through} 
We study Condition (\BHd) of blender horseshoes about the relative position of the $\mathrm{uu}$-discs through the local stable manifolds of 
$P_{\xi,\mu}=(p_{\mu}, p_{\mu},\tilde{p}_{\xi,\mu})$ and $Q_{\mu}=(q_{\mu}, q_{\mu},\tilde{q}_{\xi,\mu})$
with respect to the boundary of $\Delta$. We reduce this analysis to the two dimensional case by projecting these discs on the plane $\mathbb{YZ}$.
Consider the projection
\begin{equation*}
\Pi_1 :\mathbb{R}^3\to \mathbb{R}^2, \quad \Pi_1(x,y,z)\eqdef(y,z).
\end{equation*}
Recalling the formulae for the stable manifolds $W^{\mathrm s}(P_{\xi,\mu})$ and $W^{\mathrm s}(Q_{\xi,\mu})$ in \eqref{e.W}, we get
 $\Pi_1(W^{\mathrm s}(P_{\xi,\mu}))=(p_{\mu},\tilde{p}_{\xi,\mu})$ and 
 $\Pi_1(W^{\mathrm s}(Q_{\xi,\mu}))=(q_{\mu},\tilde{q}_{\xi,\mu})$.

Consider the auxiliary straight lines in the plane $\mathbb{YZ}$ 
through $(p_{\mu},\tilde{p}_{\xi,\mu})$ and $(q_{\mu},\tilde{q}_{\xi,\mu})$:
\begin{equation*}
\begin{split}
L^1_{\xi,\mu} &\eqdef\Big\{\big(y, z^1_{\xi,\mu}(y)\big) \colon z^1_{\xi,\mu}(y)=\frac1{2}(y-p_{\mu})+\tilde{p}_{\xi,\mu}, \, y\in \mathbb{R}\Big\},\\
L^2_{\xi,\mu} &\eqdef \Big\{ \big(y, z^2_{\xi,\mu}(y)\big) \colon z^2_{\xi,\mu}(y)=\frac1{2}(y-q_{\mu})+\tilde{q}_{\xi,\mu},  \, y\in \mathbb{R}\Big\}.
\end{split}
\end{equation*}
Note that $L^1_{\xi,\mu}$ and $L^2_{\xi,\mu}$ are  contained in the boundary of $\Pi_1\big(C^{\mathrm{uu}}_{1/2}(P_{\xi,\mu})\big)$
and of $\Pi_1\big(C^{\mathrm{uu}}_{1/2}(Q_{\xi,\mu})\big)$, respectively. These conditions are depicted in
Figure \ref{fig:6}. 
Thus  (\BHd)
follows now from the next lemma.
\begin{lemma}
For every $(\xi,\mu)\in\mathcal{P}$ it holds that
\begin{equation*}
L^1_{\xi,\mu}\cap \big(\Pi_1(\Delta)\cap \{z=22\}\big)=\emptyset,
\quad
L^2_{\xi,\mu}\cap \big(\Pi_1(\Delta)\cap \{z=-40\}\big)=\emptyset.
\end{equation*}
\end{lemma}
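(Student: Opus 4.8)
The plan is to reduce everything to an explicit computation with the two affine lines $L^1_{\xi,\mu}$ and $L^2_{\xi,\mu}$, using the coordinate estimates already established for the fixed points. Recall from \eqref{e.W} and Remark~\ref{r.Invariant directions} that $\Pi_1\big(W^{\mathrm s}(P_{\xi,\mu})\big)$ is the single point $(p_\mu,\tilde p_{\xi,\mu})$ and likewise for $Q$. So a $\mathrm{uu}$-disc $L$ with $L\cap W^{\mathrm s}_{\mathrm{loc}}(P)\neq\emptyset$ projects under $\Pi_1$ to a curve through $(p_\mu,\tilde p_{\xi,\mu})$ whose tangent directions lie in the projected $\mathrm{uu}$-cone, i.e. between the two lines of slope $\pm\tfrac12$ through that point; these two extreme lines are exactly $L^1_{\xi,\mu}$ and (for $Q$) $L^2_{\xi,\mu}$ in the statement. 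Since $\Pi_1$ collapses the $\mathbb{X}$-direction, and $\overline{\partial^{\mathrm u}\Delta\setminus\partial^{\mathrm{uu}}\Delta}$ projects to the ``top and bottom'' edges $\Pi_1(\Delta)\cap\{z=22\}$ and $\Pi_1(\Delta)\cap\{z=-40\}$, the content of (\BHd) is precisely that $L^1_{\xi,\mu}$ avoids the $\{z=22\}$ edge and $L^2_{\xi,\mu}$ avoids the $\{z=-40\}$ edge. (Once the extreme lines miss those edges, monotonicity in $z$ along the cone forces every intermediate $\mathrm{uu}$-disc through the fixed point to miss them as well, so checking the boundary lines suffices — this is why the lemma, once proved, yields (\BHd).)

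First I would make the two statements completely explicit. The line $L^1_{\xi,\mu}$ is $z=\tfrac12(y-p_\mu)+\tilde p_{\xi,\mu}$; the set $\Pi_1(\Delta)\cap\{z=22\}$ is $\{(y,22):y\in[-4,4]\}$. So $L^1_{\xi,\mu}$ meets it iff the equation $22=\tfrac12(y-p_\mu)+\tilde p_{\xi,\mu}$ has a solution $y\in[-4,4]$, i.e. iff $y=2(22-\tilde p_{\xi,\mu})+p_\mu\in[-4,4]$. Using \eqref{e.est} and \eqref{e.estim}, namely $13<\tilde p_{\xi,\mu}<15$ and $-2.7<p_\mu<-2.5$, one gets $2(22-\tilde p_{\xi,\mu})\in(14,18)$, hence $y>14-2.7>4$, so there is no such $y$ in $[-4,4]$. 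Symmetrically, $L^2_{\xi,\mu}$ meets $\{(y,-40):y\in[-4,4]\}$ iff $y=2(-40-\tilde q_{\xi,\mu})+q_\mu\in[-4,4]$; with $-20.6<\tilde q_{\xi,\mu}<-18.4$ and $3.5<q_\mu<3.71$ one finds $2(-40-\tilde q_{\xi,\mu})\in(-43.2,-38.8)$, so $y<-38.8+3.71<-4$, again impossible. This gives both emptiness assertions.

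Concretely, the proof I would write is: fix $(\xi,\mu)\in\mathcal{P}$. A point of $L^1_{\xi,\mu}\cap(\Pi_1(\Delta)\cap\{z=22\})$ would satisfy $22=\tfrac12(y-p_\mu)+\tilde p_{\xi,\mu}$ with $|y|\le 4$, hence
\[
y=2\,(22-\tilde p_{\xi,\mu})+p_\mu.
\]
By \eqref{e.est} and \eqref{e.estim}, $\tilde p_{\xi,\mu}<15$ and $p_\mu>-2.7$, so $y>2\cdot 7-2.7=11.3>4$, a contradiction; hence $L^1_{\xi,\mu}\cap(\Pi_1(\Delta)\cap\{z=22\})=\emptyset$. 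Similarly a point of $L^2_{\xi,\mu}\cap(\Pi_1(\Delta)\cap\{z=-40\})$ would give $y=2\,(-40-\tilde q_{\xi,\mu})+q_\mu$ with $|y|\le 4$; since $\tilde q_{\xi,\mu}>-20.6$ and $q_\mu<3.71$ we get $y<2\cdot(-19.4)+3.71=-35.09<-4$, again impossible.

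The main obstacle here is essentially bookkeeping rather than any real difficulty: one has to be sure the projected $\mathrm{uu}$-cone of size $\theta=1/2$ really has the two lines $L^1,L^2$ as its boundary at the fixed points (this is where the choice $\theta=1/2$ enters, and it matches the slopes $\pm1/2$ appearing in the definitions of $L^1_{\xi,\mu},L^2_{\xi,\mu}$), and that $\overline{\partial^{\mathrm u}\Delta\setminus\partial^{\mathrm{uu}}\Delta}$ projects onto exactly the two horizontal edges $\{z=22\}$ and $\{z=-40\}$ of $\Pi_1(\Delta)$ — both of which are immediate from the definitions in Section~\ref{ss.blenders}. Given that, the verification is the pair of one-line interval-arithmetic estimates above, so I would not expect any genuine trouble; the only place to be careful is to keep the inequalities in \eqref{e.est} and \eqref{e.estim} pointed in the right direction so that the slack ($y>11.3$ on one side, $y<-35$ on the other) is comfortably larger than the width of $[-4,4]$.
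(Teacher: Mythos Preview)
Your proof is correct and is essentially the same as the paper's: the paper checks the equivalent inequalities $z^1_{\xi,\mu}(4)<22$ and $z^2_{\xi,\mu}(-4)>-40$ directly, while you solve for the $y$-coordinate of the intersection and show it lies outside $[-4,4]$, which is the same monotonicity argument phrased contrapositively. One small remark: the estimates you use ($p_\mu>-2.7$, $\tilde p_{\xi,\mu}<15$, $\tilde q_{\xi,\mu}>-20.6$, $q_\mu<3.71$) all come from \eqref{e.est}, so the reference to \eqref{e.estim} is superfluous.
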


\begin{proof}
To prove the lemma it is enough to check that %
\begin{equation*}
z^1_{\xi,\mu}(4)<22\quad\mbox{and}\quad z^2_{\xi,\mu}(-4)>-40,\quad\mbox{ for every $(\xi,\mu)\in\mathcal{P}$}.
\end{equation*}
The choice of parameters $(\xi,\mu)$ and the estimates of $p_{\mu}, q_{\mu}, \tilde{p}_{\xi,\mu}, \tilde{q}_{\xi,\mu}$ in \eqref{e.est}, 
lead directly to these inequalities. 
\end{proof}
\begin{figure}
\centering
\begin{overpic}[scale=.35,bb=0 1 624 456,tics=5
  ]{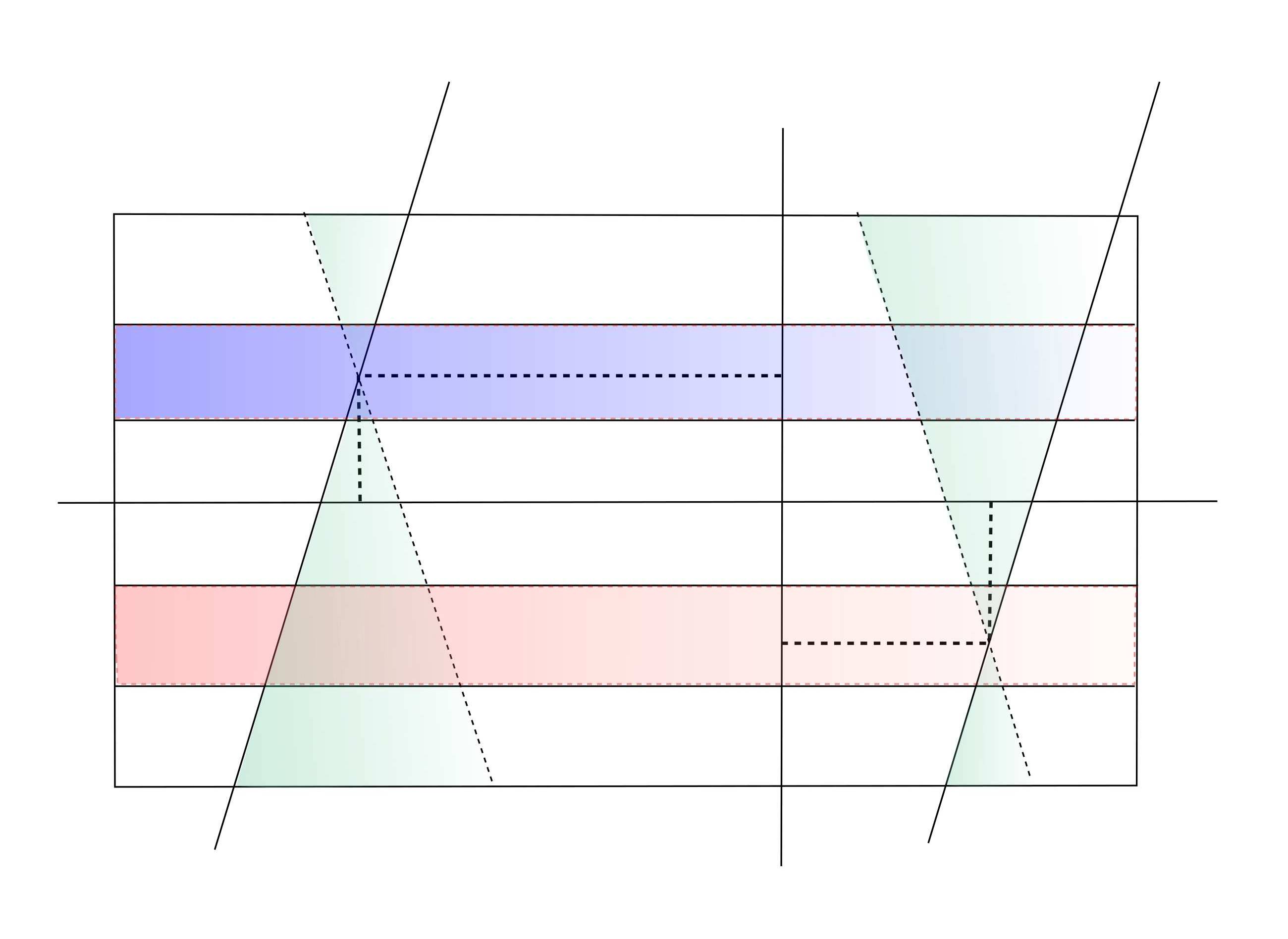}
   \put(97,35){\small{$ \mathbb{Z}$}}
     \put(61,64){\small{$ \mathbb{Y}$}}
     \put(53.5,23){{ $p_{\mu}$}}
       \put(60.5,43){{ $q_{\mu}$}}
           \put(71.5,36){{ $\tilde{p}_{\xi,\mu}$}}
             \put(23.5,30){{ $\tilde{q}_{\xi,\mu}$}}
                 \put(91,65){\small{$L^1_{\xi,\mu}$}}
                     \put(36,65){\small{$L^2_{\xi,\mu}$}}
                        \put(89,30){\small{$22$}}
                          \put(57,57){{$4$}}
                                \put(0,30){{$-40$}}
                                   \put(54,8){{$-4$}}
                                 \put(67.3,60){{\small{$C^{\mathrm{uu}}_{1/2}(P_{\xi,\mu})$}}}
                                   \put(18,6){{\small{$C^{\mathrm{uu}}_{1/2}(Q_{\xi,\mu})$}}}

 \end{overpic}
\caption{The  lines $L^1_{\xi,\mu}, L^2_{\xi,\mu}$ and the projections  in the plane $\mathbb{YZ}$ 
of the cube $\Delta$ and the 
$\mathrm{uu}$-cones at $P_{\xi,\mu}, Q_{\xi,\mu}$.}
\label{fig:6}
\end{figure}
\subsection{Position of images of $\mathrm{uu}$-discs} We now study the relative positions of the images of
$\mathrm{uu}$-discs contained in $\Delta$ in Condition (\BHe). We see that this condition follows from the 
one-dimensional dynamics on the unstable center manifolds
of the saddles $P_{\xi,\mu}$ and $Q_{\xi,\mu}$, recall \eqref{e.manifolds}.

\subsubsection{One-dimensional associated dynamics}
\label{ss.one-dimensional}
Recall that
$P_{\xi,\mu}=(p_{\mu}, p_{\mu},\tilde{p}_{\xi,\mu})$ and $Q_{\mu}=(q_{\mu}, q_{\mu},\tilde{q}_{\xi,\mu})$ and 
that the restriction of $G_{\xi,\mu}$ to the one-dimensional
center unstable
manifolds $W^{\mathrm{cu}}(P_{\xi,\mu})$, $W^{\mathrm{cu}}(Q_{\xi,\mu})$ in \eqref{e.manifolds}
 is just and affine multiplication by $\xi>1$, see
Remark~\ref{r.Invariant directions}.
Denote by 
$\phi^{r}_{\xi,\mu}$  the restriction map
$G_{\xi,\mu}|_{W^{\mathrm{cu}}(R_{\xi,\mu})\cap \Delta}$, $r=p,q$ and $R=P,Q$, that is
given by
 $$
 \phi^{r}_{\xi,\mu} \colon 
[-40,22]\to \mathbb{R},\quad
\phi^{r}_{\xi,\mu}(z)\eqdef
\xi\,z + r_\mu=
\xi\,z+(1-\xi)\, \tilde r_{\xi,\mu},\quad r=p,q,
$$
where we use the relation $r_\mu= (1-\xi)\, r_{\xi,\mu}$.
For $r=p,q$, consider the interval
$
\mathrm{I}^r_{\xi,\mu}\eqdef[\alpha^{r}_{\xi,\mu},
\beta^{r}_{\xi,\mu}]
$,
where
\[
\begin{split}
&\alpha^{r}_{\xi,\mu}\eqdef \xi^{-1}(-40-(1-\xi)\,
\tilde r_{\mu,\xi}),
\quad \beta^{r}_{\xi,\mu}
\eqdef
\xi^{-1}(22-(1-\xi)\, \tilde r_{\xi,\mu}).
\end{split}
\]
Note that 
 $\phi^{r}_{\xi,\mu}(\mathrm{I}^{r}_{\xi,\mu})=[-40,22]$
%
and $\phi^{r}_{\xi,\mu}(\tilde r_{\xi,\mu})=\tilde r_{\xi,\mu}\in \mathrm{I}^{r}_{\xi,\mu}$. 
\begin{lemma}\label{l.resuelve}
Given a $\mathrm{uu}$-disc $L$ contained  
in $\Delta$ let $L_{\mathcal{C}_{\xi,\mu}}\eqdef L\cap \mathcal{C}_{\xi,\mu}$, with $\mathcal{C}= \mathcal{A}, \mathcal{B}$.
Then $G_{\xi,\mu}(L_{\mathcal{C}_{\xi,\mu}})$ satisfies $\mathrm{(\BHe)}$.
\end{lemma}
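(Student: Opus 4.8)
The plan is to reduce $(\BHe)$ to the planar dynamics, exploiting that $G_{\xi,\mu}$ collapses the $\mathbb{X}$-direction. Under $\Pi_1(x,y,z)=(y,z)$ a $\mathrm{uu}$-disc $L\subset\Delta$ becomes a graph $z=h(y)$ over $y\in[-4,4]$ whose slope is bounded by $\theta=1/2$, so $h$ is $1/2$-Lipschitz; by \eqref{e.W}, $\Pi_1(W^{\mathrm s}(R_{\xi,\mu}))=(r_\mu,\tilde r_{\xi,\mu})$ for $R=P,Q$ and $r=p,q$, and with the convention $z^-=-40<22=z^+$ the disc $L$ belongs to $\mathcal U^{r}_{W^{\mathrm s}_{\mathrm{loc}}(R_{\xi,\mu})}$, to $\mathcal U^{\ell}_{W^{\mathrm s}_{\mathrm{loc}}(R_{\xi,\mu})}$, or meets $W^{\mathrm s}_{\mathrm{loc}}(R_{\xi,\mu})$ according as $h(r_\mu)-\tilde r_{\xi,\mu}$ is positive, negative, or zero. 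Moreover $\Pi_1\big(G_{\xi,\mu}(L_{\mathcal C_{\xi,\mu}})\big)$ is the arc $y\mapsto(\mu+y^2,\xi h(y)+y)$ with $y$ ranging over $\I_\mu$ if $\mathcal C=\mathcal A$ and over $\J_\mu$ if $\mathcal C=\mathcal B$; since $y\mapsto\mu+y^2$ maps $\I_\mu$ (resp. $\J_\mu$) homeomorphically onto $[-4,4]$ by \eqref{e.estim} and the cone field $\mathcal C^{\mathrm{uu}}_{1/2}$ is $DG$-invariant by Lemma~\ref{l.condition3}, this image is again a $\mathrm{uu}$-disc through $\Delta$, as already recorded in the statement of $(\BHe)$.

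To determine the class of this image disc I would locate the unique $y$ in the relevant interval with $\mu+y^2=r_\mu$. The fixed-point identity $r_\mu=\mu+r_\mu^2$ forces $y=\pm r_\mu$, and since $\I_\mu\subset(-4,0)$, $\J_\mu\subset(0,4)$, $p_\mu\in\I_\mu$, $q_\mu\in\J_\mu$ (Remark~\ref{r.fixpoint}), exactly one of $\pm r_\mu$ lies in the interval. In the four ``matching'' cases (1)--(4) it is $y=r_\mu$, and the $\mathbb Z$-coordinate of the image disc over the fibre $y=r_\mu$ equals $\xi h(r_\mu)+r_\mu=\phi^r_{\xi,\mu}(h(r_\mu))$ (using $r_\mu=(1-\xi)\tilde r_{\xi,\mu}$); as $\phi^r_{\xi,\mu}$ is an increasing affine map fixing $\tilde r_{\xi,\mu}$, the number $\phi^r_{\xi,\mu}(h(r_\mu))-\tilde r_{\xi,\mu}$ has the same sign as $h(r_\mu)-\tilde r_{\xi,\mu}$, which is exactly the persistence of the left/right class claimed in (1)--(4). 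In the two ``crossing'' cases (5) and (6) the relevant $y$ is $-r_\mu$, so the $\mathbb Z$-coordinate of the image disc over the fibre $y=r_\mu$ equals $\xi h(-r_\mu)-r_\mu$.

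The only genuinely delicate point is (5)--(6), since there one controls $h$ at $r_\mu$ but must pass to $h(-r_\mu)$, and the one-dimensional maps $\phi^r_{\xi,\mu}$ no longer suffice by themselves. Here I would invoke the Lipschitz bound $|h(-r_\mu)-h(r_\mu)|\le|r_\mu|$ (note $|{-}r_\mu-r_\mu|=2|r_\mu|$). In case (5) the hypothesis gives $h(p_\mu)\ge\tilde p_{\xi,\mu}$, hence, since $p_\mu<0$,
\[
\xi h(-p_\mu)-p_\mu=\xi h(-p_\mu)+|p_\mu|\ge \xi\big(\tilde p_{\xi,\mu}-|p_\mu|\big)+|p_\mu|=\xi\tilde p_{\xi,\mu}-(\xi-1)|p_\mu|,
\]
and this exceeds $\tilde p_{\xi,\mu}$ as soon as $\tilde p_{\xi,\mu}>|p_\mu|$. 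In case (6) the hypothesis gives $h(q_\mu)\le\tilde q_{\xi,\mu}$, hence, since $q_\mu>0$,
\[
\xi h(-q_\mu)-q_\mu=\xi h(-q_\mu)-|q_\mu|\le \xi\big(\tilde q_{\xi,\mu}+|q_\mu|\big)-|q_\mu|=\xi\tilde q_{\xi,\mu}+(\xi-1)|q_\mu|,
\]
and this is smaller than $\tilde q_{\xi,\mu}$ as soon as $|\tilde q_{\xi,\mu}|>|q_\mu|$. After cancelling the positive factor $\xi-1$ (which requires only $\xi>1$), both inequalities follow from \eqref{e.est} with a wide margin, namely $\tilde p_{\xi,\mu}>13>2.7>|p_\mu|$ and $|\tilde q_{\xi,\mu}|>18.4>3.71>|q_\mu|$; as the inequalities obtained are strict, the image disc indeed lies in the required open class. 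Collecting the six cases gives $(\BHe)$ and hence the lemma. It is precisely in the crossing cases that the width of the parameter window $\mathcal P$ enters.
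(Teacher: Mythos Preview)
Your proof is correct and follows essentially the same approach as the paper: project to the $\mathbb{YZ}$-plane, use Remark~\ref{r.rar} to reduce the class of a $\mathrm{uu}$-disc to the sign of $h(r_\mu)-\tilde r_{\xi,\mu}$, and then handle the four matching cases (1)--(4) via the one-dimensional maps $\phi^r_{\xi,\mu}$ and the crossing cases (5)--(6) via the Lipschitz bound coming from the $\mathrm{uu}$-cone of size $1/2$. The paper's ``worst case line'' $L^*_{\xi,\mu}$ is precisely the extremal $1/2$-Lipschitz graph realising equality in your bound $h(-p_\mu)\ge \tilde p_{\xi,\mu}-|p_\mu|$, and your final inequalities $(\xi-1)(\tilde p_{\xi,\mu}-|p_\mu|)>0$ and $(\xi-1)(\tilde q_{\xi,\mu}+|q_\mu|)<0$ are the same as the paper's Claim~\ref{cl.keep} after simplifying (recall $\sqrt{p_\mu-\mu}=|p_\mu|$ from $p_\mu^2=p_\mu-\mu$); your reduction to $\tilde p_{\xi,\mu}>|p_\mu|$ and $|\tilde q_{\xi,\mu}|>|q_\mu|$ is in fact a bit cleaner than the three-term numerical check in the paper.
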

\begin{proof}
We first show item (1) of (\BHe). Items (2), (3), and (4) are obtained similarly and their proofs will be omitted. 

From \eqref{e.WW} and \eqref{e.W}, the local stable manifolds of $P_{\xi,\mu}$ and $Q_{\xi,\mu}$ are given by 
\begin{equation}
\label{e.ultimahora}
\begin{split}
&W^{\mathrm s}_{\mathrm {loc}}(P_{\xi,\mu})=\Big\{(t+p_{\xi,\mu},p_{\xi,\mu},\tilde{p}_{\xi,\mu}): -4-p_{\xi,\mu}\le t \le 4-p_{\xi,\mu}\Big\},
\\
&W^{\mathrm s}_{\mathrm {loc}}(Q_{\xi,\mu})=\Big\{(t+q_{\xi,\mu},q_{\xi,\mu},\tilde{q}_{\xi,\mu}): -4-q_{\xi,\mu}\le t \le 4-q_{\xi,\mu}\Big\}.
\end{split}
\end{equation}

Given  a $\mathrm{uu}$-disc $L\subset \Delta$ consider the intersections
\begin{equation*}
\begin{split}
&X^L_{\mu}\eqdef L
\cap \big(\Delta\cap \{ y=p_{\mu} \}\big)=
L_{\mathcal{A}_{\xi,\mu}}
\cap \big(\Delta\cap \{ y=p_{\mu} \}\big)
=(x_{\mu}, p_{\mu}, z_{\mu}),
\\
&\bar{X}^L_{\mu}\eqdef L
\cap \big(\Delta\cap \{ y=q_{\mu} \}\big)=
L_{\mathcal{B}_{\xi,\mu}}
\cap \big(\Delta\cap \{ y=q_{\mu} \}\big)
=(\bar{x}_{\mu}, q_{\mu}, \bar{z}_{\mu}).
\end{split}
\end{equation*}

\begin{remark}\label{r.rar}
{\em{
Recall the definitions of
the right and left  classes of $\mathrm{uu}$-discs $\mathcal{U}^r_W$ and $\mathcal{U}^\ell_W$, respectively, in Remark~\ref{r.leftandright}.
Using \eqref{e.ultimahora}
we have the following:
\begin{itemize}
\item
$L\in \mathcal{U}^\ell_{W^{\mathrm s}_{\mathrm {loc}}(P_{\xi,\mu})}$ iff $z_{\mu} < \tilde{p}_{\xi,\mu}$ and 
$L\in \mathcal{U}^r_{W^{\mathrm s}_{\mathrm {loc}}(P_{\xi,\mu})}$ iff
$z_{\mu}> \tilde{p}_{\xi,\mu}$,
\item
$L\in \mathcal{U}^\ell_{W^{\mathrm s}_{\mathrm {loc}}(Q_{\xi,\mu})}$ iff
$\bar{z}_{\mu}< \tilde{q}_{\xi,\mu}$ and 
$L\in \mathcal{U}^r_{W^{\mathrm s}_{\mathrm {loc}}(Q_{\xi,\mu})}$ iff
$\bar{z}_{\mu}>\tilde{q}_{\xi,\mu}$.
\end{itemize}
}}
\end{remark}
To prove (1) in (\BHe), take any $L\in \mathcal{U}^r_{W^{\mathrm s}_{\mathrm {loc}}(P_{\xi,\mu})}$.  We will see that
$G_{\xi,\mu}(L_{\mathcal{A}_{\xi,\mu}})\in \mathcal{U}^r_{W^{\mathrm s}_{\mathrm {loc}}(P_{\xi,\mu})}$. 
By Remark~\ref{r.rar},
the point $X_\mu^L=(x_{\mu}, p_{\mu}, z_{\mu})$
satisfies $z_{\mu}>\tilde p_{\xi,\mu}$.
Note that
$$
G_{\xi,\mu}(X^L_{\mu})=\big(p_\mu, p_\mu, \phi^{p}_{\xi,\mu}(z_{\mu})\big)=
\big(p_\mu, p_\mu, \xi z_{\mu}+ (1-\xi) \tilde p_{\xi,\mu}\big).
$$
Since $z_{\mu}> \tilde p_{\xi,\mu}$ it follows that
$\phi^{p}_{\xi,\mu}(z_{\mu})>\tilde p_{\xi,\mu}$. Remark \ref{r.rar} now implies that $G_{\xi,\mu}(L_{\mathcal{A}_{\xi,\mu}})\in \mathcal{U}^r_{W^{\mathrm s}_{\mathrm {loc}}(P_{\xi,\mu})}$.

Since items (5) and (6) of  (\BHe) are analogous we just prove 
item (5).
We  just need to check that if
 $L\in \mathcal{U}^r_{W^{\mathrm s}_{\mathrm {loc}}(P_{\xi,\mu})}$
or $L \cap W^\mathrm{s}_{\mathrm{loc}} (P_{\xi,\mu})\neq\emptyset$ then
$G_{\xi,\mu}(L_{\mathcal{B}_{\xi,\mu}})\in \mathcal{U}^r_{W^{\mathrm s}_{\mathrm {loc}}(P_{\xi,\mu})}$. 


\begin{remark}
{\em{
Consider the projection  $\Pi_1(x,y,z)=(y,z)$ and note that
 $$
 \Pi_1\big(L\cap \{y\ge p_{\mu}\}\big)\subset \Gamma_{\xi,\mu}\eqdef \big\{(y,z): z\ge z^*_{\xi,\mu}(y)\big\},
 $$
see Figure \ref{fig:7}. Moreover, 
$
\Pi_1(L_{\mathcal{B}_{\xi,\mu}})\subset \Gamma_{\xi,\mu}\cap \Pi_1(\mathcal{B}_{\xi,\mu}).
$
}}
\end{remark}

Note that the worst case to prove (5) in  (\BHe) occurs when $L$ is contained in the plane $\mathbb{YZ}$ 
and equal to the straight line $L^*_{\xi,\mu}$ in the plane $\mathbb{YZ}$ through $(p_{\mu},\tilde{p}_{\xi,\mu})$ given by
\begin{equation} 
\label{e.definitiontildezeta}
L^*_{\xi,\mu} \eqdef\Big\{ \big(y, z^*_{\xi,\mu}(y)\big) \colon  z^*_{\xi,\mu}(y)=-\frac1{2}(y-p_{\mu})+\tilde{p}_{\xi,\mu}, \,\, y\in \mathbb{R}\Big\}.
\end{equation}
Consider 
the segment of  $L^*_{\xi,\mu}$ given by (see Figure \ref{fig:7})
$$\gamma_{\xi,\mu}\eqdef\big\{\big(y,z^*_{\xi,\mu}(y)\big):y\in\J_{\mu}\big\}
\subset L^*_{\xi,\mu} \cap\Pi_1(\mathcal{B}_{\xi,\mu}) 
$$
and the point 
$\tilde{z}_{\xi,\mu}$ defined by
\begin{equation}
\label{e.geo}
g_{\xi,\mu}(\gamma_{\xi,\mu})\cap \{y=p_{\mu}\}=\{(p_{\mu},\tilde{z}_{\xi,\mu})\},
\end{equation}
where  the endomorphism $g_{\xi,\mu}$ obtained by  projecting  $G_{\xi,\mu}$ 
into the plane $\mathbb{YZ}$
defined in \eqref{e.g}. 
By   Remark \ref{r.rar}
 to get $G_{\xi,\mu}(L_{\mathcal{B}_{\xi,\mu}})\in \mathcal{U}^r_{W^{\mathrm s}_{\mathrm {loc}}(P_{\xi,\mu})}$ it is sufficient to show that $\tilde{z}_{\xi,\mu}>\tilde{p}_{\xi,\mu}$.
\begin{figure}
\centering
\begin{overpic}[scale=.3,bb=0 0 1333 681,tics=5
  ]{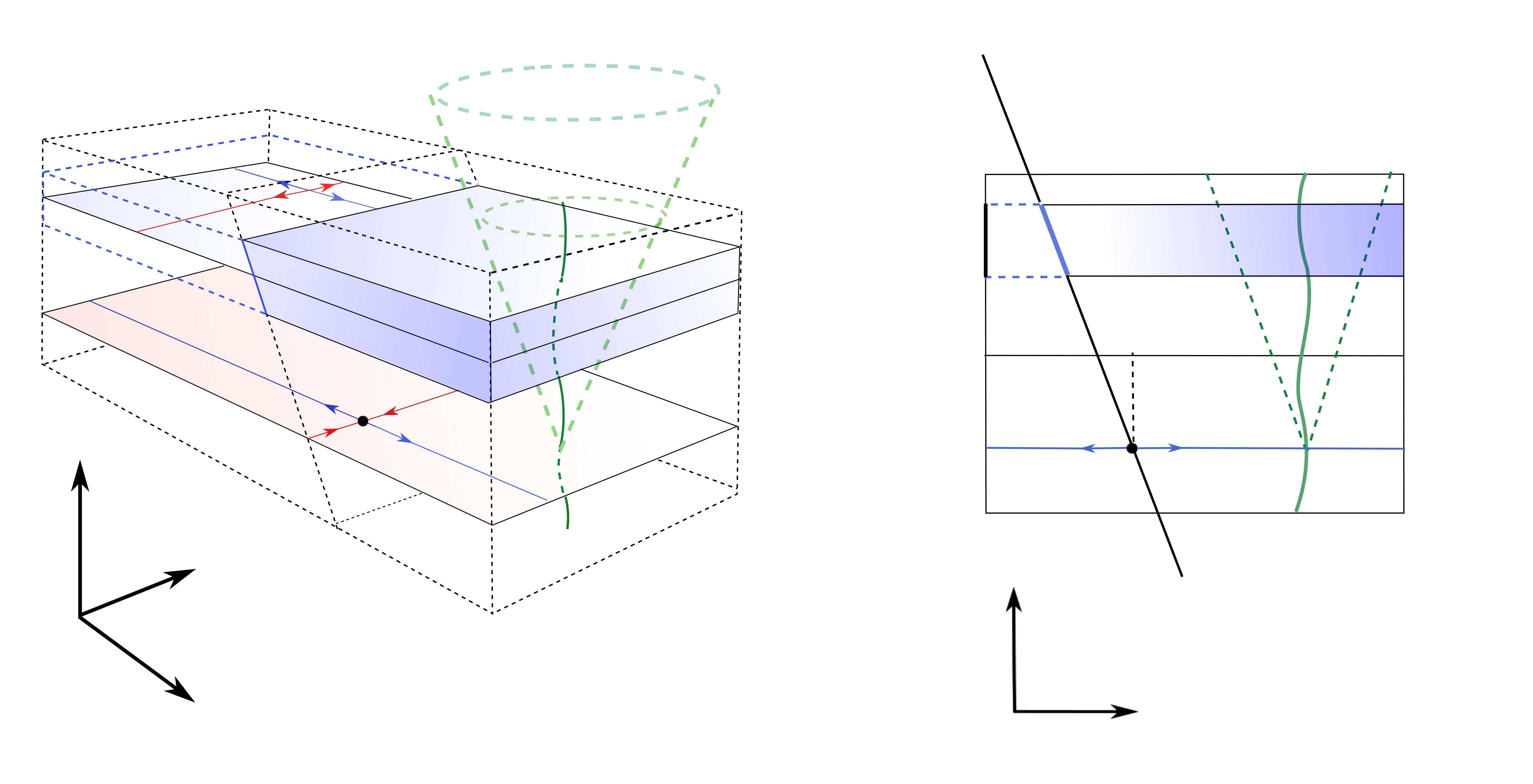}
  \put(13,13){\small{$ \mathbb{X}$}}
     \put(6,19){\small{$ \mathbb{Y}$}}
      \put(13,6){\small{$ \mathbb{Z}$}}
      \put(21.5,25.5){{$P_{\xi,\mu}$}}
       \put(33.5,35){{ $L$}}
        \put(49,32){{$\mathcal{B}_{\xi,\mu}$}}
        \put(33,49){{$\mathcal{C}^{\mathrm{uu}}_{1/2}(X_{\mu})$}}
        \put(36,21){{\small{$\bullet$}}}
          \put(38,21.5){{$X_\mu$}}
        \put(61,35){{$\J_{\mu}$}}
        \put(67,10){\small{$ \mathbb{Y}$}}
      \put(73,6){\small{$ \mathbb{Z}$}}
        \put(70,35){{$\gamma_{\xi,\mu}$}}
     \put(60,21.6){{ $p_{\mu}$}}
           \put(71,29.5){{ $\tilde{p}_{\xi,\mu}$}}
                 \put(66,45){\small{$L^*_{\xi,\mu}$}}
                        \put(81,41){{ $\Pi_1(L)$}}
            \put(92.6,34.3){{$\Pi_1(\mathcal{B}_{\xi,\mu})$}}           
                        \put(92.3,27){\small{$22$}}
                          \put(61.8,39.5){{$4$}}
                           \put(60,16){{$-4$}}
                                \put(62,27){{$0$}}
                                 \put(30,2){{$(a)$}}
                                \put(80,2){{$(b)$}}    
 \end{overpic}
\caption{$(a)$ 
 $L$ is a uu-disc in $\mathcal{U}^r_{W^{\mathrm s}_{\mathrm {loc}}(P_{\xi,\mu})}$. $(b)$ 
 Projection of $L$ in the plane $\mathbb{YZ}$.}
\label{fig:7}
\end{figure}
\begin{cl} \label{cl.keep}
It holds $\tilde{z}_{\xi,\mu}>\tilde{p}_{\xi,\mu}$  for every $(\xi,\mu)\in\mathcal{P}$. 
\end{cl}
\begin{proof}
The intersection \eqref{e.geo}
is defined by the conditions
$$
(p_{\mu}, \tilde{z}_{\xi,\mu})=(y^2+\mu,\xi\, z^*_{\xi,\mu}(y)+y), \quad y>0.
$$
Recalling the definition of $z^*_{\xi,\mu}(y)$ in \eqref{e.definitiontildezeta} we get
$$
\tilde{z}_{\xi,\mu}=\xi \, \tilde{z}^*_{\xi,\mu}\big(\sqrt{p_\mu-\mu}\big) +  \sqrt{p_\mu-\mu} =\frac{\xi}{2}\, p_{\mu}+\Big(1-\frac{\xi}{2}\Big)\sqrt{p_\mu-\mu}+\xi\,\tilde p_{\xi,\mu}.
$$
Hence
$$
\tilde{z}_{\xi,\mu}-\tilde p_{\xi, \mu}
=
\frac{\xi}{2}\, p_{\mu}+\Big(1-\frac{\xi}{2}\Big)\sqrt{p_\mu-\mu}+(\xi-1)\,\tilde p_{\xi,\mu}.
$$
The estimates in \eqref{e.est} and the choice of $(\xi,\mu)\in \mathcal{P}$ imply that
$$
\frac{\xi}{2}\,p_{\mu}>-1.6065, 
\quad
\Big(1-\frac{\xi}{2}\Big)\sqrt{p_\mu-\mu}>1.014
,\quad
(\xi-1)\,\tilde p_{\xi,\mu}>2.34.
$$
These inequalities imply that
$\tilde{z}_{\xi,\mu}-\tilde p_{\xi,\mu}>0$, proving the claim.
%
\end{proof}
The proof of the lemma is now complete. 
\end{proof}

\subsection{Position of images of $\mathrm{uu}$-discs in between}
\label{ss.between} 
Condition (\BHf) is given by Lemma~\ref{l.below} below. First, recall the definition of the family of disks in between
$\mathcal{U}^b \eqdef \mathcal{U}^\ell_{W^{\mathrm s}_{\mathrm {loc}}(P)}
\cap
\mathcal{U}^r_{W^{\mathrm s}_{\mathrm {loc}}(Q)}.$

\begin{lemma}
\label{l.below}
Consider any $L\in\mathcal{U}^b$. Then  either $G_{\mu,\xi}(L_{\mathcal{A}_{\xi,\mu}})$ or $G_{\mu,\xi}(L_{\mathcal{B}_{\xi,\mu}})$ contains a $\mathrm{uu}$-disc in $\mathcal{U}^b$.
\end{lemma}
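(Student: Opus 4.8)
The plan is to reduce everything to the one-dimensional center-unstable dynamics on the plane $\mathbb{YZ}$, exactly as in the proof of Lemma~\ref{l.resuelve}, and then to exhibit explicitly, for a $\mathrm{uu}$-disc $L\in\mathcal{U}^b$, one of the two images $G_{\xi,\mu}(L_{\mathcal{A}_{\xi,\mu}})$, $G_{\xi,\mu}(L_{\mathcal{B}_{\xi,\mu}})$ that lands in $\mathcal{U}^b=\mathcal{U}^\ell_{W^{\mathrm s}_{\mathrm{loc}}(P_{\xi,\mu})}\cap\mathcal{U}^r_{W^{\mathrm s}_{\mathrm{loc}}(Q_{\xi,\mu})}$. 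First I would record that by (\BHa)-(\BHb) (Lemmas~\ref{l.L1} and \ref{l.condition3}) both $F(L_{\mathcal{A}})$ and $F(L_{\mathcal{B}})$ are genuine $\mathrm{uu}$-discs through $\Delta$, so the only thing to check is their \emph{homotopy class} relative to the two local stable manifolds; by Remark~\ref{r.rar} this is decided by a single scalar: the $z$-coordinate of the intersection of the image disc with $\{y=p_\mu\}$ (for the class relative to $P$) and with $\{y=q_\mu\}$ (for the class relative to $Q$), compared with $\tilde p_{\xi,\mu}$ and $\tilde q_{\xi,\mu}$, respectively. Since $G_{\xi,\mu}$ collapses the $\mathbb{X}$-direction, this scalar depends only on $\Pi_1(L)$, and the extreme (``worst'') $\mathrm{uu}$-discs are, as in Lemma~\ref{l.resuelve}, the straight segments of slope $\pm 1/2$ in the plane $\mathbb{YZ}$.

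The main step is a dichotomy on the position of the disc $L\in\mathcal{U}^b$ relative to $W^{\mathrm s}_{\mathrm{loc}}(P_{\xi,\mu})$ and $W^{\mathrm s}_{\mathrm{loc}}(Q_{\xi,\mu})$, measured by where $\Pi_1(L)$ crosses the vertical lines $\{y=p_\mu\}$ and $\{y=q_\mu\}$. Being in $\mathcal{U}^b$ forces $z_\mu<\tilde p_{\xi,\mu}$ (left of $P$) and $\bar z_\mu>\tilde q_{\xi,\mu}$ (right of $Q$). I would then use the explicit affine map $\phi^r_{\xi,\mu}(z)=\xi z+(1-\xi)\tilde r_{\xi,\mu}$ on each center-unstable fibre, together with the slope-$\le 1/2$ bound on $\mathrm{uu}$-discs, to track the images: applying $G_{\xi,\mu}$ to $L_{\mathcal{A}_{\xi,\mu}}$ shifts the crossing height toward $\tilde p_{\xi,\mu}$ from below (expansion by $\xi$ around the fixed height $\tilde p_{\xi,\mu}$ on the $P$-fibre, plus a controlled $\le\tfrac{\xi}{2}\,|\I_\mu|$ error from the slope along the $y$-interval $\I_\mu$), and similarly for $L_{\mathcal{B}_{\xi,\mu}}$ around $\tilde q_{\xi,\mu}$. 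The point is that $G_{\xi,\mu}(L_{\mathcal{A}})$ automatically lies in $\mathcal{U}^\ell_{W^{\mathrm s}_{\mathrm{loc}}(P)}$ by item~(2) of (\BHe), and $G_{\xi,\mu}(L_{\mathcal{B}})$ automatically lies in $\mathcal{U}^r_{W^{\mathrm s}_{\mathrm{loc}}(Q)}$ by item~(3); so for $G_{\xi,\mu}(L_{\mathcal{A}})\in\mathcal{U}^b$ the only missing condition is that it be \emph{right} of $Q$, and for $G_{\xi,\mu}(L_{\mathcal{B}})\in\mathcal{U}^b$ the only missing condition is that it be \emph{left} of $P$. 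I would show these two ``missing'' conditions cannot fail simultaneously: the crossing height of $L$ at $\{y=q_\mu\}$ being very high (so that $G_{\xi,\mu}(L_{\mathcal{B}})$ overshoots and is no longer left of $P$) forces, via the slope bound linking the heights at $y=q_\mu$ and $y=p_\mu$, the crossing height at $\{y=p_\mu\}$ to be high enough that $G_{\xi,\mu}(L_{\mathcal{A}})$ lands right of $Q$; and vice versa. Quantitatively this is a pair of linear inequalities in the two crossing heights whose ``bad'' regions are disjoint, checked using the estimates \eqref{e.est}, the values $\xi\in(1.18,1.19)$, and the lengths of $\I_\mu$, $\J_\mu$ from \eqref{e.estim}.

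The hard part will be the bookkeeping of the two competing affine estimates: one must pin down, with the slope-$\tfrac12$ correction and the explicit interval lengths, the precise thresholds on the crossing heights at which each image leaves $\mathcal{U}^b$, and then verify numerically (as in Claim~\ref{cl.keep}) that these thresholds are ordered so that the union of the two ``good'' ranges covers all of $\mathcal{U}^b$. I expect this to come down to one or two inequalities of the same flavour as $\tfrac{\xi}{2}p_\mu+(1-\tfrac{\xi}{2})\sqrt{p_\mu-\mu}+(\xi-1)\tilde p_{\xi,\mu}>0$, now also involving the $q$-quantities, and I would treat the extremal straight-line $\mathrm{uu}$-discs of slope $\pm\tfrac12$ through $(p_\mu,\tilde p_{\xi,\mu})$ and through $(q_\mu,\tilde q_{\xi,\mu})$ as the critical test cases, exactly as $L^*_{\xi,\mu}$ in \eqref{e.definitiontildezeta} played that role before. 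Once the dichotomy is established on these extremal discs, monotonicity of the fibrewise map $\phi^r_{\xi,\mu}$ and convexity propagate it to all of $\mathcal{U}^b$, completing the proof.
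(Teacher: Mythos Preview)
Your proposal is correct and follows essentially the same strategy as the paper: use items~(2) and~(3) of (\BHe) to reduce (\BHf) to checking that the two ``bad'' events --- $G_{\xi,\mu}(L_{\mathcal{A}})$ not right of $Q$ and $G_{\xi,\mu}(L_{\mathcal{B}})$ not left of $P$ --- cannot occur simultaneously, then settle this by passing to extremal slope-$\tfrac12$ lines in the plane $\mathbb{YZ}$ and verifying a numerical inequality with the estimates \eqref{e.est}, \eqref{e.estim}.

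The only noteworthy difference is bookkeeping. You propose to track two crossing heights (at $y=p_\mu$ and $y=q_\mu$) and link them by the slope bound; the paper instead chooses a \emph{single} reference height $z_\mu$ at $y=a_\mu$, builds one auxiliary line $\widehat L_\mu$ of slope $\tfrac12$ through $(a_\mu,z_\mu)$, and --- invoking the orientation-reversal Remark~\ref{r.reversing} to know which side of the cone is extremal on each leg --- computes the two image crossings $\omega^I_{\xi,\mu}$, $\omega^J_{\xi,\mu}$ directly from this single parameter. Subtracting, the unknown $z_\mu$ cancels and the contradiction becomes the clean inequality $|\tilde q_{\xi,\mu}-\tilde p_{\xi,\mu}|\le (\tfrac{\xi}{2}+1)|q_\mu-a_\mu|\le 12.16$, against the known $|\tilde q_{\xi,\mu}-\tilde p_{\xi,\mu}|\ge 31.4$. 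Your two-parameter version works just as well but keeps the free height until the end; the paper's choice of $a_\mu$ as basepoint and explicit use of Remark~\ref{r.reversing} makes the final inequality slightly tidier.
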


\begin{proof}
Consider $L\in\mathcal{U}^b$.
By item (2) in (\BHe),
if $G_{\xi,\mu}(L_{\mathcal{A}_{\xi,\mu}})\in \mathcal{U}^r_{W^{\mathrm s}_{\mathrm {loc}}(Q_{\xi,\mu})}$
then $G_{\xi,\mu}(L_{\mathcal{A}_{\xi,\mu}})\in \mathcal{U}^b$ and we are done.
Similarly,
by item (3) in (\BHe),
if $G_{\xi,\mu}(L_{\mathcal{B}_{\xi,\mu}})\in \mathcal{U}^\ell_{W^{\mathrm s}_{\mathrm {loc}}(P_{\xi,\mu})}$
then $G_{\xi,\mu}(L_{\mathcal{B}_{\xi,\mu}})\in \mathcal{U}^b$ and we are done.
Thus in what follows
we  argue by contradiction assuming that: 
\begin{itemize}
\item[a)]
$G_{\xi,\mu}(L_{\mathcal{A}_{\xi,\mu}})\in \mathcal{U}^\ell_{W^{\mathrm s}_{\mathrm {loc}}(Q_{\xi,\mu})}$ or
intersects $W^{\mathrm s}_{\mathrm {loc}}(Q_{\xi,\mu})$ and
\item[b)] 
$G_{\xi,\mu}(L_{\mathcal{B}_{\xi,\mu}})\in \mathcal{U}^r_{W^{\mathrm s}_{\mathrm {loc}}(P_{\xi,\mu})}$
or intersects 
$W^{\mathrm s}_{\mathrm {loc}}(P_{\xi,\mu})$.
\end{itemize}

To prove the lemma 
we need some auxiliary constructions.
Consider 
the point
 $Y^L_\mu=(x_{\mu}, a_{\mu}, z_{\mu})\eqdef  L \cap \{y=a_{\mu}\}$, where $a_\mu$ is defined in \eqref{e.ys}. In the plane $\mathbb{YZ}$,
take the  auxiliary straight line $\widehat{L}_{\mu}$ through $(a_\mu,z_{\mu})$
 given by (see Figure \ref{fig:8})
\begin{equation*}
\widehat{L}_{\mu}   \eqdef \Big\{\big(y, z_\mu^a(y)\big) \colon z^a_\mu(y)=\frac1{2}(y-a_{\mu})+z_\mu, \,\, y\in \mathbb{R}\Big\}.
\end{equation*}
Observe that $ \widehat{L}_{\mu}  \subset \partial\Pi_1\big(\mathcal{C}^{\mathrm{uu}}_{1/2}(Y^L_{\mu})\big)$. 
 Consider the sub segments  of $\widehat{L}_{\mu}$ given by (see Figure \ref{fig:8})
\begin{equation*}
\widehat L^I_{\mu}\eqdef
\big\{\big(y,z^a_{\mu}(y)\big): y\in\I_{\mu}\big\}\quad 
\mbox{and}\quad
\widehat L^J_{\mu}\eqdef
\big\{\big(y,z^a_{\mu}(y)\big): y\in\J_{\mu}\big\}.
\end{equation*}

\begin{figure}
\centering
\begin{overpic}[scale=.35,bb=0 0 738 543,tics=5
  ]{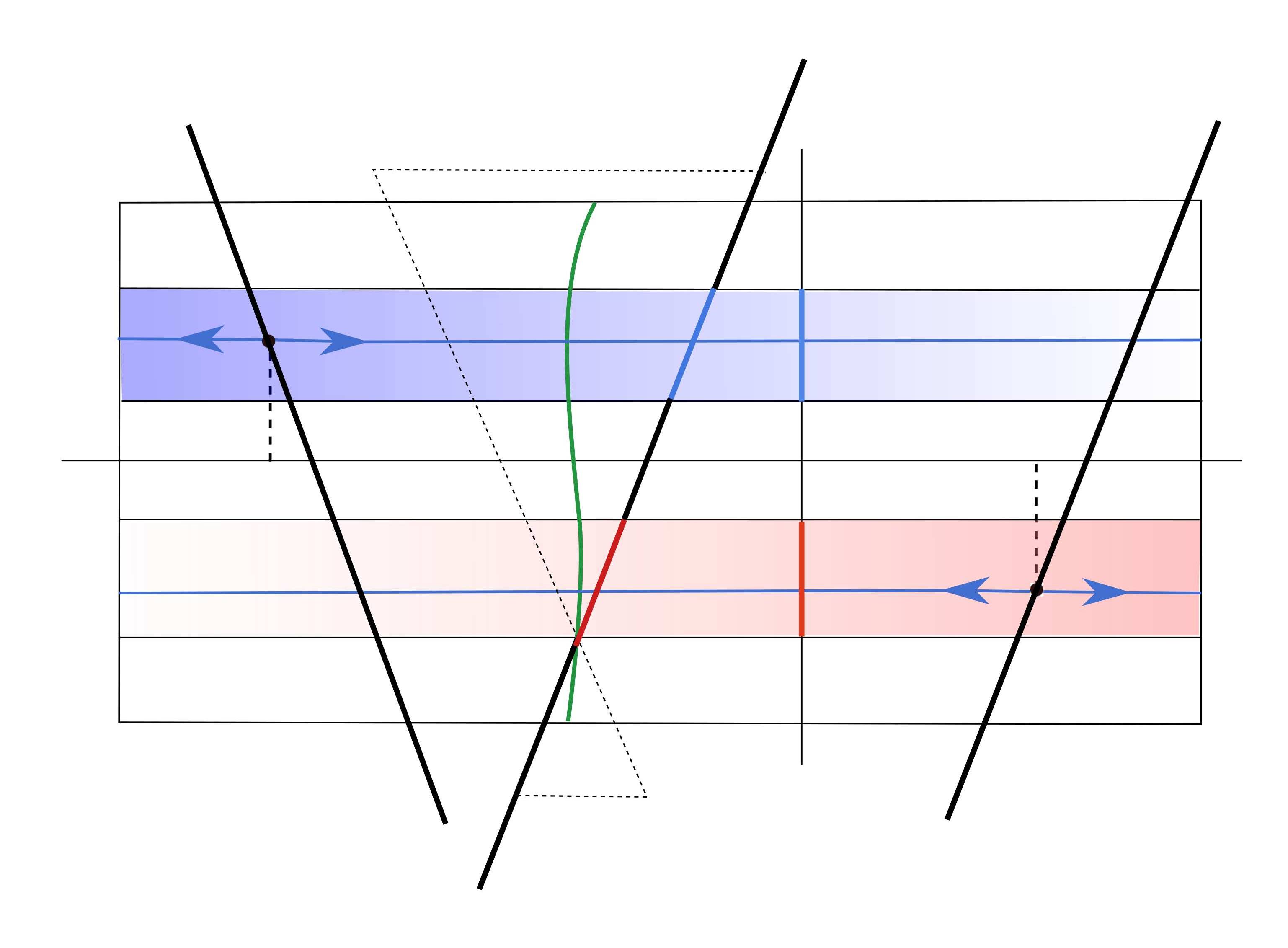}
      \put(39,53){{ $L$}}
       \put(49,27){{$\widehat{L}^I_{\mu}$}}
        \put(47,46){{$\widehat{L}^J_{\mu}$}}
    \put(63,26.6){{ $\I_{\mu}$}} 
         \put(62,46){{ $\J_{\mu}$}}
           \put(76,40){{ $\tilde{p}_{\xi,\mu}$}}
            \put(16,34){{ $\tilde{q}_{\xi,\mu}$}}
                \put(55,65){\small{$\widehat L_{\mu}$}}
                        \put(95,33){\small{$22$}}
                          \put(65,60){{$4$}}
                           \put(63,13){{$-4$}}
                                \put(0,33){{$-40$}}
                                                         \put(85,62){{$\widehat{L}^p_{\xi,\mu}$}}
                                                         \put(17,62){{$\widehat{L}^q_{\xi,\mu}$}}

 \end{overpic}
\caption{
 The segments $\widehat{L}^a_\mu$  and $\widehat{L}^b_\mu$ and the lines $\widehat{L}^p_{\xi,\mu}$ and  $\widehat{L}^q_{\xi,\mu}$.}
\label{fig:8}
\end{figure}

Recall that $P_{\xi,\mu}=(p_{\mu}, p_{\mu},\tilde{p}_{\xi,\mu})$ and $Q_{\mu}=(q_{\mu}, q_{\mu},\tilde{q}_{\xi,\mu})$ and
consider the straight lines $\widehat{L}^p_{\xi,\mu}$  and  $\widehat{L}^q_{\xi,\mu}$ contained in
 $\partial\Pi_1\big(\mathcal{C}^{\mathrm{uu}}_{1/2}(P_{\xi,\mu})\big)$ and 
$\partial\Pi_1\big(\mathcal{C}^{\mathrm{uu}}_{1/2}(Q_{\xi,\mu})\big)$, respectively,
given by
\begin{equation*}
\begin{split}
\widehat{L}^p_{\xi,\mu}   \eqdef \Big\{ \big(y, z^p(y)\big) \colon z^p(y)&=\frac1{2}(y-p_{\mu})+\tilde{p}_{\xi,\mu}, \,\, y\in \mathbb{R}\Big\},
\\
\widehat{L}^q_{\xi,\mu}   \eqdef \Big\{ \big(y, z^q (y)\big) \colon z^q(y)&=-\frac1{2}(y-q_{\mu})+\tilde{q}_{\xi,\mu}, \,\, y\in \mathbb{R}\Big\}.
\end{split}
\end{equation*}
Finally, 
consider the following subsets of $\Delta$
$$
\Sigma^p_{\xi,\mu}\eqdef \Big([-4,4]\times \widehat L^{p}_{\xi,\mu}\Big) \cap \Delta, \quad
 \Sigma^{q}_{\xi,\mu}\eqdef \Big([-4,4]\times \widehat L^{q}_{\xi,\mu}\Big) \cap \Delta.
$$
Observe that $\Delta \setminus \Sigma^r_{\xi,\mu}$, $r=p,q$, consists of two connected components.
We let $\Delta^Q_{\xi,\mu,\mathrm{right}}$ the connected component of  $\Delta \setminus \Sigma^q_{\xi,\mu}$ containing $P_{\xi,\mu}$ and  by  $\Delta^Q_{\xi, \mu,\mathrm{left}}$ the other component. Similarly, we let 
$\Delta^P_{\xi, \mu,\mathrm{left}}$ the connected component of  $\Delta \setminus \Sigma^p_{\xi,\mu}$ containing $Q_{\xi,\mu}$ and  by  $\Delta^P_{\xi,\mu,\mathrm{right}}$ the other component.

After these preliminary constructions, we are now ready to prove the lemma.
Note that by  Remark \ref{r.reversing} 
``$G_{\xi,\mu}\big([-4,4]\times \widehat{L}^I_{\mu}\big)$ is at the left of
$G_{\xi,\mu}(L_{\mathcal{A}_{\xi,\mu}})$'' and
``$G_{\xi,\mu}\big([-4,4]\times \widehat{L}^J_{\mu}\big)$ is at the right of
$G_{\xi,\mu}(L_{\mathcal{B}_{\xi,\mu}})$''. Therefore
\begin{itemize}
\item condition (a) implies that
$G_{\xi,\mu}\big([-4,4]\times \widehat{L}^I_{\mu}\big)\subset \textrm{closure}(\Delta^Q_{\xi,\mu, \mathrm{left}})$,
\item condition (b) implies that
$G_{\xi,\mu}\big([-4,4]\times \widehat{L}^J_{\mu}\big)\subset \textrm{closure}(\Delta^P_{\xi,\mu, \mathrm{right}})$.
\end{itemize}
We now see that these two conditions cannot hold simultaneously.
Consider  $\omega^{I}_{\xi,\mu}, \omega^{J}_{\xi,\mu}\in \mathbb{Z}$ 
given by
$$
g_{\xi,\mu}\big(\widehat{L}^{I}_{\mu}\big)\cap \{y=q_\mu\}=(q_\mu,\omega^{I}_{\xi,\mu})
\quad\mbox{and}\quad 
g_{\xi,\mu}\big(\widehat{L}^{J}_{\mu}\big)\cap \{y=p_\mu\} =(p_\mu,\omega^{J}_{\xi,\mu}).
$$
Arguing as in Claim \ref{cl.keep}, we get
$$
\omega^{I}_{\xi,\mu}=\frac{\xi}{2}(q_\mu-a_{\mu})+\xi z_\mu+ q_\mu\quad\mbox{and}\quad \omega^{J}_{\xi,\mu}=\xi z_\mu+ a_\mu.
$$ 
On the other hand, our assumptions and Remark~\ref{r.rar} imply that $\omega^{I}_{\xi,\mu}\le \tilde q_{\xi,\mu}$ and $\omega^{J}_{\xi,\mu}\ge \tilde p_{\xi,\mu}$.
Thus
$$
|\tilde q_{\xi,\mu}-\tilde p_{\xi,\mu}|\le 
|\omega^{I}_{\xi,\mu}-\omega^{J}_{\xi,\mu}|
\le 
\Big(\frac{\xi}{2}+1\Big) | q_{\mu}- a_{\mu}| \le 12.16,
$$
where the last inequality follows from the estimates in \eqref{e.est} and \eqref{e.estim}.
Since, also by \eqref{e.est}, we have that
$|\tilde q_{\xi,\mu}-\tilde p_{\xi,\mu}|\in [31.4, 35,6]$ we derive a contradiction, completing the proof of the lemma.
\end{proof}
\bibliographystyle{siam}

\begin{thebibliography}{10}

\bibitem{AviCroWil:16}
{\sc A.~Avila, S.~Crovisier, and A.~Wilkinson}, {\em Diffeomorphisms with
  positive metric entropy}, Publ. Math. Inst. Hautes \'Etudes Sci., 124 (2016),
  pp.~319--347.

\bibitem{BarKiRai:14}
{\sc P.~G. {Barrientos}, Y.~{Ki}, and A.~{Raibekas}}, {\em {Symbolic
  blender-horseshoes and applications}}, Nonlinearity, 27 (2014),
  pp.~2805--2839.

\bibitem{BR:2017}
{\sc P.~G. Barrientos and A.~Raibekas}, {\em Robustly non-hyperbolic transitive
  symplectic dynamics}, arXiv:1707.06473,  (2017).

\bibitem{BocBonDia:16}
{\sc J.~Bochi, C.~Bonatti, and L.~J. D\'iaz}, {\em Robust criterion for the
  existence of nonhyperbolic ergodic measures}, Comm. Math. Phys., 344 (2016),
  pp.~751--795.

\bibitem{BDCW}
{\sc C.~Bonatti, S.~Crovisier, L.~J. D\'iaz, and A.~Wilkinson}, {\em What
  is$\ldots$ a blender?}, Notices Amer. Math. Soc., 63 (2016), pp.~1175--1178.

\bibitem{BonDia:96}
{\sc C.~Bonatti and L.~J. D\'iaz}, {\em Persistent nonhyperbolic transitive
  diffeomorphisms}, Ann. of Math. (2), 143 (1996), pp.~357--396.

\bibitem{BDcycles}
{\sc C.~Bonatti and L.~J. D{\'\i}az}, {\em Robust heterodimensional cycles and
  ${C^1}$-generic dynamics}, Journal of the Institute of Mathematics of
  Jussieu, 7 (2008), pp.~469--525.

\bibitem{BDtang}
{\sc C.~Bonatti and L.~J. D\'iaz}, {\em Abundance of {$C^1$}-robust homoclinic
  tangencies}, Trans. Amer. Math. Soc., 364 (2012), pp.~5111--5148.

\bibitem{BonDiaVia:95}
{\sc C.~Bonatti, L.~J. D{\'i}az, and M.~Viana}, {\em Discontinuity of the
  {H}ausdorff dimension of hyperbolic sets}, C. R. Acad. Sci. Paris S\'er. I
  Math., 320 (1995), pp.~713--718.

\bibitem{BDV}
{\sc C.~Bonatti, L.~J. D{\'\i}az, and M.~Viana}, {\em Dynamics beyond uniform
  hyperbolicity: A global geometric and probabilistic perspective}, vol.~102,
  Springer Science \& Business Media, 2006.

\bibitem{L95}
{\sc L.~J. D\'iaz}, {\em Robust nonhyperbolic dynamics and heterodimensional
  cycles}, Ergodic Theory and Dynamical Systems, 15 (1995), pp.~291--315.

\bibitem{DiaGelGroJag:17}
{\sc L.~J. {D{\'{\i}}az}, K.~{Gelfert}, M.~{Gr{\"o}ger}, and T.~{J{\"a}ger}},
  {\em {Hyperbolic graphs: critical regularity and box dimension}},
  arXiv:1702.06416v1 and to appear in Trans. Amer. Math. Soc.,  (2017).

\bibitem{DKS}
{\sc L.~J. D\'iaz, S.~Kiriki, and K.~Shinohara}, {\em Blenders in centre
  unstable {H\'e}non-like families: with an application to heterodimensional
  bifurcations}, Nonlinearity, 27 (2014), pp.~353--378.

\bibitem{DNP}
{\sc L.~J. D{\'i}az, A.~Nogueira, and E.~R. Pujals}, {\em Heterodimensional
  tangencies}, Nonlinearity, 19 (2006), pp.~2543--2566.

\bibitem{HKOS:2018}
{\sc S.~{Hittmeyer}, B.~{Krauskopf}, H.~{Osinga}, and K.~{Shinohara}}, {\em The
  geometry of blenders in a three-dimensional h{\'e}non-like family}, preprint,
   (2017).

\bibitem{KS:2012}
{\sc S.~Kiriki and T.~Soma}, {\em {$C^2$}-robust heterodimensional tangencies},
  Nonlinearity, 25 (2012), pp.~3277--3299.

\bibitem{MV}
{\sc L.~Mora and M.~Viana}, {\em Abundance of strange attractors}, Acta
  Mathematica, 171 (1993), pp.~1--71.

\bibitem{MO}
{\sc C.~G. Moreira}, {\em There are no ${C^1}$-stable intersections of regular
  {C}antor sets}, Acta mathematica, 206 (2011), pp.~311--323.

\bibitem{MorZul:2012}
{\sc C.~G. T. d.~A. Moreira and W.~L. L.~R. Silva}, {\em On the geometry of
  horseshoes in higher dimensions}, arXiv:1210.2623,  (2012).

\bibitem{NasPuj:12}
{\sc M.~Nassiri and E.~R. Pujals}, {\em Robust transitivity in {H}amiltonian
  dynamics}, Ann. Sci. \'Ec. Norm. Sup\'er. (4), 45 (2012), pp.~191--239.

\bibitem{N}
{\sc S.~E. Newhouse}, {\em Nondensity of axiom {A} (a) on {$\mathbb{S}^2$}},
  Global analysis, 1 (1970), pp.~191--202.

\bibitem{PalTak1995}
{\sc J.~Palis and F.~Takens}, {\em Hyperbolicity and Sensitive Chaotic Dynamics
  at Homoclinic Bifurcations: Fractal Dimensions and Infinitely Many Attractors
  in Dynamics}, Cambridge Studies in Advanced Mathematics, Cambridge University
  Press, 1995.

\bibitem{PV}
{\sc J.~Palis and M.~Viana}, {\em High dimension diffeomorphisms displaying
  infinitely many periodic attractors}, Annals of mathematics,  (1994),
  pp.~207--250.

\bibitem{Perezthesis}
{\sc S.~A. P{\'e}rez}, {\em {$C^r$-stabilisation of non-transverse
  heterodimensional cycles}}, PhD Thesis, Pontifícia Universidade Católica
  do Rio de Janeiro, Brazil,  (2016).

\bibitem{Ure:1995}
{\sc R.~Ures}, {\em Abundance of hyperbolicity in the {$C^1$} topology}, Ann.
  Sci. \'Ecole Norm. Sup. (4), 28 (1995), pp.~747--760.

\end{thebibliography}

\end{document}